\newtheorem{thm}{Theorem}[section]
\newtheorem{prop}[thm]{Proposition}
\newtheorem{lem}[thm]{Lemma}
\newtheorem{cor}[thm]{Corollary}
\newtheorem{conj}[thm]{Conjecture} 
\theoremstyle{definition}
\newtheorem{definition}[thm]{Definition}
\theoremstyle{remark}
\newtheorem{notation}[thm]{Notation}
\numberwithin{equation}{section}
\newcommand{\Span}{Span} 
\newcommand{\Hom}{Hom} 
\newcommand{\id}{\mathrm{id}}
\newcommand{\Pmc}{\mathcal{P}} 
\newcommand{\Cmc}{\mathcal{C}} 
\newcommand{\Bmc}{\mathcal{B}} 
\newcommand{\Fmc}{\mathcal{F}}
\newcommand{\Tmc}{\mathcal{T}} 
\newcommand{\Xmc}{\mathcal{X}} 
\newcommand{\Dbbb}{\mathbb{D}} 
\newcommand{\Hbbb}{\mathbb{H}} 
\newcommand{\Pbbb}{\mathbb{P}} 
\newcommand{\Rbbb}{\mathbb{R}} 
\newcommand{\Cbbb}{\mathbb{C}} 
\newcommand{\Mtd}{\widetilde{M}} 
\newcommand{\etd}{\widetilde{e}}
\begin{document}

\title{Collar lemma for Hitchin Representations.}

\begin{abstract}
In this article, we prove an analog of the classical collar lemma in the
setting of Hitchin representations.
\end{abstract}

\author{Gye-Seon Lee$^\dagger$, Tengren Zhang$^\ddagger$}

\thanks{
\hspace{-0.2in}$\dagger$: Gye-Seon Lee was supported by the DFG research grant ``Higher Teichm\"uller Theory".\\
$\ddagger$: Tengren Zhang was partially supported by U.S. National Science Foundation grants DMS 1006298, DMS 1306992 and DMS 1307164. \\
The authors acknowledge support from U.S. National Science Foundation grants DMS 1107452, 1107263, 1107367 ``RNMS: GEometric structures And Representation varieties" (the GEAR Network).}

\maketitle

\section{Introduction.} 

Let $S$ be a closed oriented topological surface and let $\Gamma$ be its fundamental group. The Teichm\"uller space of $S$, denoted $\Tmc(S)$, is the space of hyperbolic structures on $S$. Via the holonomy representation, $\Tmc(S)$ can be identified with a component of the space of conjugacy classes of representations from $\Gamma$ to $PSL(2,\Rbbb)$. One advantage of doing so is that it allows us to generalize $\Tmc(S)$ in the following way. It is a standard fact in representation theory that for any $n\geq 2$, there is a unique (up to conjugation) irreducible representation $\iota_n:PSL(2,\Rbbb)\to PSL(n,\Rbbb)$. This gives, via post composition, an embedding
\[\Tmc(S)\hookrightarrow\Xmc_n(S):=\Hom(\Gamma,PSL(n,\Rbbb))/PSL(n,\Rbbb).\]
The image of this embedding is known as the \emph{Fuchsian locus} and the component of $\Xmc_n(S)$ containing the Fuchsian locus is the \emph{$n$th-Hitchin component}, denoted $Hit_n(S)$. By definition, $Hit_2(S)=\Tmc(S)$, so Hitchin representations can be thought of as generalizations of Fuchsian representations.

For the hyperbolic structures in $\Tmc(S)$, there is a classical result first due to Keen \cite{Kee1} known as the collar lemma. It gives an effective lower bound on the width of a collar neighborhood of a simple closed curve in a hyperbolic surface, which grows to $\infty$ as the length of the simple closed curve is shrunk to $0$. A consequence of the collar lemma is that if two closed curves $\gamma$ and $\eta$ in a hyperbolic surface have non-vanishing geometric intersection number and $\gamma$ is simple, then there is an explicit lower bound on the length of $\eta$ in terms of the length of $\gamma$. This is a powerful tool that has been used to understand surfaces. For example, it was used to study the length spectrum of Riemann surfaces (see Buser \cite{Bus1}).

The goal of this paper is to generalize a version of the classical collar lemma to Hitchin representations. In this setting, the width of a collar neighborhood is not well defined since Hitchin representations in general do not give a metric on $S$. However, for every Hitchin representation $\rho$, we do still have a natural notion of length for homotopy classes of closed curves in $S$. 

By Labourie \cite{Lab1}, we know that for any Hitchin representation $\rho$ and any non-identity element $X$ in $\Gamma$, $\rho(X)$ is diagonalizable over $\Rbbb$ with eigenvalues that have pairwise distinct moduli. Hence, given any representation $\rho$ in $Hit_n(S)$ and any closed curve $\gamma$ in $S$, we can define the \emph{$\rho$-length} of $\gamma$ to be
\[l_\rho(\gamma)=\log\bigg|\frac{\lambda_n}{\lambda_1}\bigg|,\]
where $\lambda_n$ and $\lambda_1$ are the eigenvalues of $\rho(X)$ of largest and smallest modulus respectively, and $X\in\Gamma$ corresponds to the curve $\gamma$ equipped with a choice of orientation. Observe that the $\rho$-length does not depend on the choice of orientation on $\gamma$, and is constant on each homotopy class of closed curves in $S$. 

In the case when $\rho\in Hit_2(S)$, $l_\rho(\gamma)$ is exactly the hyperbolic length of the geodesic homotopic to $\gamma$, measured in the hyperbolic metric corresponding to $\rho$. Also, Choi-Goldman \cite{ChoGol1} proved that representations in $Hit_3(S)$ are exactly holonomies of convex $\Rbbb\Pbbb^2$ structures on $S$. Moreover, each such convex $\Rbbb\Pbbb^2$ structure also induces a natural Finsler metric, known as the Hilbert metric, on $S$. One can then verify that in the case when $\rho\in Hit_3(S)$, $l_\rho(\gamma)$ is the length of the geodesic homotopic to $\gamma$, measured in the Hilbert metric induced by the convex $\Rbbb\Pbbb^2$ structure corresponding to $\rho$.

With this, we have the following theorem, which one can think of as a generalization of the collar lemma.

\begin{thm}\label{main theorem}
Let $S$ be a surface of genus $g\geq 2$, and let $\gamma$, $\eta$ be two non-contractible closed curves in $S$. Denote the geometric intersection number between $\gamma$ and $\eta$ by $i(\eta,\gamma)$. Then, for any $n\geq 2$ and any $\rho\in Hit_n(S)$, the following hold:
\begin{enumerate}
\item If $i(\eta,\gamma)\neq 0$, then
\[\frac{1}{\exp(l_\rho(\eta))}< 1-\frac{1}{\exp(\frac{l_\rho(\gamma)}{n-1})}.\]
\item If $i(\eta,\gamma)\neq 0$ and $\gamma$ is simple, then  there are non-negative integers $u$, $v$ with $u\geq v$ and $u+v=i(\eta,\gamma)$ so that
\[\frac{1}{\exp(l_\rho(\eta))}< \bigg(1-\frac{1}{\exp(\frac{l_\rho(\gamma)}{n-1})}\bigg)^u\bigg(1-\frac{1}{\exp(l_\rho(\gamma))}\bigg)^v.\]
\item Let $\delta_n>0$ be the unique real solution to the equation $e^{-x}+e^{-x/(n-1)}=1$. If $\eta$ is a non-simple closed curve, then
\[l_\rho(\eta)>\delta_n.\]
\end{enumerate}
\end{thm}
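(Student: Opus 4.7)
The strategy is to convert the topological hypothesis $i(\eta,\gamma) \neq 0$ into a quantitative projective inequality using Labourie's Frenet curve $\xi : \partial\Gamma \to \mathrm{Flag}(\Rbbb^n)$ and its positivity property. Let $X, Y \in \Gamma$ represent $\gamma, \eta$, with attracting and repelling fixed points $x^\pm, y^\pm \in \partial\Gamma$, and label the eigenvalues $\lambda_n(X) > \cdots > \lambda_1(X) > 0$ of $\rho(X)$ with eigenlines $L_n, \dots, L_1$. The Frenet curve sends $x^+$ to the flag $L_n \subset L_n \oplus L_{n-1} \subset \cdots$, with analogous descriptions at $x^-, y^\pm$. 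When $i(\eta,\gamma) \neq 0$, lifts may be chosen so that $x^+, y^+, x^-, y^-$ are cyclically linked in $\partial\Gamma$, and positivity of $\xi$ then imposes a total-positivity constraint on the corresponding quadruple of flags.

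The key lemma I will prove is that, under this linking hypothesis,
\[
e^{-l_\rho(\eta)} < 1 - \frac{\lambda_k(X)}{\lambda_{k+1}(X)} \quad \text{for each } k = 1, \dots, n-1.
\]
To prove it, I pass to the Grassmannian $\mathrm{Gr}(n-k, \Rbbb^n)$, on which $\rho(X)$ is loxodromic with attracting point $\xi^{(n-k)}(x^+)$, repelling point $\xi^{(k)}(x^-)$, and top-to-second eigenvalue ratio $\lambda_{k+1}(X)/\lambda_k(X)$. In the affine chart centered at $\xi^{(n-k)}(x^+)$ and transverse to $\xi^{(k)}(x^-)$, I express the attracting $(n-k)$-plane $\xi^{(n-k)}(y^+)$ of $\rho(Y)$ as a graph; positivity forces the leading coefficients of this graph to have a specific sign pattern, and a cross-ratio computation pairing the $\rho(X)$-spectral gap with the $\rho(Y)$-contraction rate $e^{-l_\rho(\eta)}$ towards $\xi^{(n-k)}(y^+)$ yields the inequality. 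Part (1) then follows by selecting the $k$ that minimizes $\lambda_{k+1}(X)/\lambda_k(X)$: the geometric mean of the $n-1$ consecutive ratios equals $e^{l_\rho(\gamma)/(n-1)}$, so the minimum is at most this value.

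For part (2), with $\gamma$ simple, I lift $\eta$ to $\widetilde S$ and count its crossings with distinct $\Gamma$-translates of the axis of $X$, of which there are precisely $i(\eta,\gamma)$. Iterating the key lemma at each crossing and telescoping produces a product bound in which each crossing contributes either $1 - e^{-l_\rho(\gamma)/(n-1)}$ or the strictly smaller factor $1 - e^{-l_\rho(\gamma)}$, according to the relative position of the translate's attracting flag with respect to $\xi(y^\pm)$. Writing $u, v$ for the respective counts (with $u + v = i(\eta,\gamma)$) and choosing the orientation of $\gamma$ to arrange $u \geq v$ gives the stated inequality. For part (3), a self-intersection of $\eta$ produces two distinct conjugate elements $Y$ and $gYg^{-1}$ of $\Gamma$ whose axes cross; both have $\rho$-length $l_\rho(\eta)$, and applying part (1) to this pair yields $e^{-l_\rho(\eta)} < 1 - e^{-l_\rho(\eta)/(n-1)}$, which by definition of $\delta_n$ is exactly $l_\rho(\eta) > \delta_n$.

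The chief technical obstacle is the key lemma: translating the global positivity of $\xi$ into the correct sign for the cross-ratio computation on each $\mathrm{Gr}(n-k, \Rbbb^n)$, uniformly in $k$. A secondary difficulty is the combinatorial content of part (2) -- identifying which crossings admit the stronger factor and why orientations can be chosen so that $u \geq v$ -- which will require careful tracking of flag configurations along each lift of $\eta$.
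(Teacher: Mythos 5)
Your overall architecture matches the paper's: the heart is the per-gap inequality $e^{-l_\rho(\eta)} < 1-\lambda_k(X)/\lambda_{k+1}(X)$ for every $k$ (this is Proposition \ref{main proposition}(1)), followed by combining over $k$ (your minimum-versus-geometric-mean step is equivalent to the paper's product over $k$), a crossing-by-crossing product for (2), and the two-linked-conjugates trick for (3). The problem is the key lemma itself, which you acknowledge as the chief obstacle and whose sketched mechanism cannot work as described. You propose to derive it from the linking of $x^{\pm},y^{\pm}$ together with positivity of the corresponding quadruple of flags, via a graph/sign-pattern/cross-ratio computation in an affine chart of a Grassmannian; but all of that data is available for an arbitrary pair of loxodromic elements with linked fixed points, and the inequality is false at that level of generality. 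Already for $n=2$ it reduces to $(e^{l_\rho(\gamma)}-1)(e^{l_\rho(\eta)}-1)>1$, and two hyperbolic elements of $PSL(2,\Rbbb)$ with crossing axes and both translation lengths tiny (generating a non-discrete group) have linked, pairwise distinct fixed points --- the full ``positivity'' hypothesis in rank one --- yet violate it. So positivity of the four fixed flags plus the spectral data cannot suffice; any correct proof must use more of the equivariant Frenet curve and the surface-group structure.

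What the paper actually uses, and what is absent from your sketch, is the position of translated fixed points: writing $A$ for $\eta$ and $B$ for $\gamma$ as in the paper, one needs that $a^+,\ A\cdot b^+,\ B\cdot a^+,\ b^+,\ a^-,\ b^-$ lie in $\partial_\infty\Gamma$ in that cyclic order, i.e.\ that $B$ pushes $a^+$ past $A\cdot b^+$ (Lemma \ref{topological}); this rests on the hyperbolic-geometry Lemma \ref{Ara}, which crucially uses that $B$ and $ABA^{-1}$ are conjugate, hence have equal translation length in an auxiliary hyperbolic structure, so that $B\cdot L_{ABA^{-1}}$ is disjoint from $L_{ABA^{-1}}$. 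Only with this extra point, together with the point $A\cdot b^+$ and the Frenet monotonicity of Lemma \ref{important lemma}, does the cross-ratio chain close up to give $\alpha_n/\alpha_1 > \beta_{k+2}/(\beta_{k+2}-\beta_{k+1})$ (Lemma \ref{main computation}); you would need to supply this ingredient to make your key lemma correct. A smaller point on (2): a single crossing does not directly contribute a factor $1-e^{-l_\rho(\gamma)/(n-1)}$; for each fixed $k$ it contributes $1-\beta_{k+1}/\beta_{k+2}$ or $1-\beta_{n-k-1}/\beta_{n-k}$ according to the crossing direction, and converting the resulting product (Proposition \ref{main proposition}(2)) into the stated form with exponents $u\ge v$ requires the rearrangement done in the paper's proof of Theorem \ref{main theorem}(2) before taking the product over $k$, not a per-crossing telescoping.
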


We can say what the constants $u$ and $v$ are in (2) of Theorem \ref{main theorem}. Choose orientations on $\gamma$ and $\eta$, and let $\hat{i}(\gamma,\eta)$ be the algebraic intersection number between $\gamma$ and $\eta$. Then 
\[u=\frac{i(\gamma,\eta)+|\hat{i}(\gamma,\eta)|}{2}\text{ and }v=\frac{i(\gamma,\eta)-|\hat{i}(\gamma,\eta)|}{2}.\]
More geometrically, if we let $w_0$ and $w_1$ be number of times $\eta$ crosses $\gamma$ from left to right and from right to left respectively, then $u=\max\{w_0,w_1\}$ and $v=\min\{w_0,w_1\}$. Observe also that $\lim_{n\to\infty}\delta_n=\infty$ and $\delta_2=\log(2)$.

Theorem \ref{main theorem} is in fact a consequence of a more general inequality between $l_\rho(\gamma)$ and the eigenvalues of the image of the group element corresponding to $\eta$ under the representation $\rho$ (see Proposition \ref{main proposition}).

In the case of $\Tmc(S)$, the first inequality in Theorem \ref{main theorem} can be rewritten as 
\[(\exp(l_\rho(\eta))-1)(\exp(l_\rho(\gamma))-1)> 1.\] 
This is weaker than a version of the classical collar lemma, which is the inequality 
\[\sinh\bigg(\frac{l_\rho(\eta)}{2}\bigg)\sinh\bigg(\frac{l_\rho(\gamma)}{2}\bigg)> 1,\]
although in both inequalities, $l_\rho(\eta)$ grows logarithmically with $\frac{1}{l_\rho(\gamma)}$. Furthermore, while the classical collar lemma is sharp, we are unable to prove the same for Theorem \ref{main theorem}. This led us to conjecture, in an earlier version of this paper, that for any $\rho\in Hit_n(S)$, there is some representation $\rho'$ in the Fuchsian locus of $Hit_n(S)$ such that $l_\rho(\gamma)\geq l_{\rho'}(\gamma)$ for all $\gamma\in\Gamma$. If the conjecture holds, then we can obtain a sharp version of Theorem \ref{main theorem}. Recently, Tholozan \cite{Tho1} proved that this conjecture is true for $n=3$, but F. Labourie showed that it fails for all $n\geq 4$. See Section \ref{comparison} for more details.

Choi \cite{Cho1} proved an analog of the Margulis lemma for convex $\Rbbb\Pbbb^2$ surfaces. As a consequence, he showed the existence of a collar neighborhood in the convex $\Rbbb\Pbbb^2$ surface about a simple closed curve of sufficiently short length, and found (non-explicit) lower bounds for the width of this collar neighborhood in terms of the length of the simple closed curve. This analog of the Margulis lemma was later extended by Cooper-Long-Tillman \cite{CooLonTil1} to all convex real projective manifolds. Burger-Pozzetti \cite{BurPoz1} also recently proved a similar result for maximal representations into $Sp(2n,\Rbbb)$.

Although the inequalities in Theorem \ref{main theorem} depend on $n$, we can obtain as a corollary the following ``universal collar lemma" that is simultaneously true for all Hitchin representations. 

\begin{cor}[Corollary \ref{main corollary}]\label{main corollary intro}
Let $S$ be a surface of genus $g\geq 2$, and let $\gamma$, $\eta$ be two non-contractible closed curves in $S$. Then for any $n\geq 2$ and any $\rho\in Hit_n(S)$, the following hold:
\begin{enumerate}
\item If $i(\eta,\gamma)\neq 0$, then
\[(\exp(l_\rho(\gamma))-1)(\exp(l_\rho(\eta))-1)> 1.\]
\item If $i(\eta,\gamma)\neq 0$ and $\gamma$ is simple, then  
\[(\exp(l_\rho(\gamma))-1)\bigg(\exp\bigg(\frac{l_\rho(\eta)}{ i(\eta,\gamma)}\bigg)-1\bigg)> 1.\]
\item If $\eta$ is a non-simple closed curve, then
\[l_\rho(\eta)>\log(2).\]
\end{enumerate}
\end{cor}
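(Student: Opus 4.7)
The plan is to derive each item of Corollary~\ref{main corollary intro} from the corresponding item of Theorem~\ref{main theorem} by a monotonicity argument that removes the dependence on $n$. For part (1), I would start with the inequality
\[\frac{1}{\exp(l_\rho(\eta))} < 1 - \frac{1}{\exp(l_\rho(\gamma)/(n-1))}\]
from Theorem~\ref{main theorem}(1). Since $n\geq 2$ forces $l_\rho(\gamma)/(n-1)\leq l_\rho(\gamma)$, the right-hand side is bounded above by $1 - 1/\exp(l_\rho(\gamma))$. Writing $a = \exp(l_\rho(\gamma)) > 1$ and $b = \exp(l_\rho(\eta)) > 1$, the resulting inequality $1/a + 1/b < 1$ is algebraically equivalent to $(a-1)(b-1) > 1$, which is what is asserted.

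For part (2), I would bound the factor $1 - 1/\exp(l_\rho(\gamma)/(n-1))$ on the right-hand side of Theorem~\ref{main theorem}(2) by $1 - 1/\exp(l_\rho(\gamma))$ using the same monotonicity estimate. Since $u + v = i(\eta,\gamma)$, the product collapses to
\[\frac{1}{\exp(l_\rho(\eta))} < \left(1 - \frac{1}{\exp(l_\rho(\gamma))}\right)^{i(\eta,\gamma)}.\]
Taking an $i(\eta,\gamma)$-th root reduces the claim to exactly the same algebraic step as in part (1), applied with $l_\rho(\eta)$ replaced by $l_\rho(\eta)/i(\eta,\gamma)$.

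For part (3), the task is to verify that the constant $\delta_n$ from Theorem~\ref{main theorem}(3) satisfies $\delta_n \geq \log 2$ for every $n \geq 2$. For $n = 2$, the defining equation reduces to $2e^{-x} = 1$, so $\delta_2 = \log 2$ exactly. For $n > 2$, substituting $x = \log 2$ into $e^{-x} + e^{-x/(n-1)}$ yields $\tfrac{1}{2} + 2^{-1/(n-1)} > 1$ because $2^{-1/(n-1)} > 1/2$; since $x \mapsto e^{-x} + e^{-x/(n-1)}$ is strictly decreasing, this forces $\delta_n > \log 2$. Combined with the strict inequality $l_\rho(\eta) > \delta_n$ from Theorem~\ref{main theorem}(3), we conclude $l_\rho(\eta) > \log 2$.

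There is no substantive obstacle here: the corollary is essentially a repackaging of Theorem~\ref{main theorem} in a form that is uniform in $n$, and every step is an elementary comparison. The only point requiring care is that each monotonicity estimate weakens an upper bound, so the strictness of each inequality is preserved throughout.
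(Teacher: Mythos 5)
Your proposal is correct and follows essentially the same route as the paper: each item is deduced from the corresponding item of Theorem \ref{main theorem} by the monotonicity bound $1-e^{-l_\rho(\gamma)/(n-1)}\leq 1-e^{-l_\rho(\gamma)}$ (plus the $i(\eta,\gamma)$-th root for (2)), and (3) reduces to the fact that $\delta_n\geq\delta_2=\log 2$, which you verify explicitly while the paper simply asserts the monotonicity of $\{\delta_n\}$.
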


Unfortunately, for $\rho\in Hit_n(S)$ when $n\geq 4$, it is not known whether there exists a metric on $S$ that induces $l_\rho$ as its length function. However, we can still interpret Theorem \ref{main theorem} and Corollary \ref{main corollary intro} geometrically by considering the $SL(n,\Rbbb)$ symmetric space $\widetilde{M}$. Normalize the Riemannian metric on $\Mtd$ so that for any $Z\in PSL(n,\Rbbb)$ with real eigenvalues, 
\[\inf\{d_{\Mtd}(o,Z\cdot o):o\in\Mtd\}=\sqrt{2\sum_{i=1}^n(\log|\lambda_i|)^2},\]
where $\lambda_1,\dots,\lambda_n$ are the eigenvalues of $Z$ and $d_{\Mtd}$ is the distance function on $\Mtd$ induced by the normalized Riemannian metric. Let $M:=\rho(\Gamma)\backslash\widetilde{M}$, and for any closed curve $\omega$ in $M$, let $l_M(\omega)$ be the length of $\omega$ measured in the Riemannian metric on $M$ induced by the normalized Reimannian metric on $\Mtd$. Then the following corollary of Theorem \ref{main theorem} and Corollary 1.2 holds.

\begin{cor}[Corollary \ref{symmetric space}]
Let $\gamma$, $\eta$ be two non-contractible closed curves in $S$ and let $X$, $Y$ be elements in $\Gamma$ corresponding to $\gamma$, $\eta$ respectively. For any $\rho\in Hit_n(S)$, let $\gamma'$, $\eta'$ be two closed curves in $M$ that correspond to $X,Y\in\Gamma$ respectively. Then the statements in Theorem \ref{main theorem} and Corollary \ref{main corollary intro} hold, with $l_\rho(\gamma)$ and $l_\rho(\eta)$ replaced with $l_M(\gamma')$ and $l_M(\eta')$ respectively.
\end{cor}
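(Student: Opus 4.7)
The strategy is to reduce the statement to Theorem \ref{main theorem} and Corollary \ref{main corollary intro} by establishing the single pointwise inequality
\[ l_M(\omega') \;\geq\; l_\rho(\omega) \]
for every $X\in\Gamma$ and every pair of closed curves $\omega\subset S$, $\omega'\subset M$ corresponding to $X$. Once this is available, the corollary follows by monotonicity: each inequality in Theorem \ref{main theorem} and Corollary \ref{main corollary intro} has the form $F(l_\rho(\eta))<G(l_\rho(\gamma))$ or $l_\rho(\eta)>c$, where $F$ is strictly decreasing and $G$ is strictly increasing in its argument. For part (2) of Theorem \ref{main theorem}, $G$ is a product of the two factors $1-e^{-l_\rho(\gamma)/(n-1)}$ and $1-e^{-l_\rho(\gamma)}$ raised to non-negative integer powers $u,v$; each factor lies in $(0,1)$ and is increasing in $l_\rho(\gamma)$, so the product is still increasing. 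Hence enlarging $l_\rho(\eta)$ to $l_M(\eta')$ and $l_\rho(\gamma)$ to $l_M(\gamma')$ only strengthens each inequality.

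To prove the pointwise inequality, I would lift the loop $\omega'$ in $M=\rho(\Gamma)\backslash\Mtd$ to a path in $\Mtd$ from a basepoint $o$ to $\rho(X)\cdot o$. Since $M$ carries the quotient Riemannian metric, the length of $\omega'$ in $M$ equals the length of this lift in $\Mtd$, so
\[ l_M(\omega') \;\geq\; d_{\Mtd}\bigl(o,\,\rho(X)\cdot o\bigr) \;\geq\; \inf_{o'\in\Mtd}d_{\Mtd}\bigl(o',\,\rho(X)\cdot o'\bigr) \;=\; \sqrt{\,2\sum_{i=1}^n(\log|\lambda_i|)^2\,}, \]
where $\lambda_1,\dots,\lambda_n$ are the eigenvalues of $\rho(X)$ and the final equality is precisely the normalization of the Riemannian metric on $\Mtd$ specified in the statement.

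Next set $\mu_i := \log|\lambda_i|$ and order the eigenvalues so that $\mu_1<\mu_2<\cdots<\mu_n$; this is possible by Labourie's result, which also ensures that the $\mu_i$ are real and distinct. The elementary estimate
\[ (\mu_n-\mu_1)^2 \;\leq\; 2\bigl(\mu_1^2+\mu_n^2\bigr) \;\leq\; 2\sum_{i=1}^n\mu_i^2 \]
then yields $l_M(\omega')\geq \mu_n-\mu_1 = l_\rho(\omega)$, which completes the reduction.

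The only non-formal step is the normalization identity at the end of the second display, which amounts to computing the translation length of the semisimple element $\rho(X)$ on the $SL(n,\Rbbb)$-symmetric space. This is a standard calculation via the polar decomposition of $\rho(X)$ and is exactly what the normalization in the statement is chosen to match, so I do not expect a genuine obstacle beyond this bookkeeping. Everything else is monotonicity together with the one-line inequality $(\mu_n-\mu_1)^2\leq 2\sum\mu_i^2$.
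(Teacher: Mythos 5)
Your proposal is correct and follows essentially the same route as the paper: bound $l_M(\omega')$ from below by the translation length of $\rho(X)$ on $\Mtd$, invoke the normalization to identify that translation length with $\sqrt{2\sum_i(\log\lambda_i)^2}$, and use the elementary inequality $(\mu_n-\mu_1)^2\leq 2\sum_i\mu_i^2$ to conclude $l_M(\omega')\geq l_\rho(\omega)$; the paper leaves the final monotonicity step implicit, which you spell out. No gaps.
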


It is an important remark that this corollary (and hence Theorem \ref{main theorem}) is not simply a quantitative version of the Margulis lemma on $PSL(n,\Rbbb)$ because the closed curves $\gamma'$ and $\eta'$ do not need to intersect, even when $i(\gamma,\eta)\neq 0$. Theorem \ref{main theorem} is a property that is special to Hitchin representations. In fact, for any pair of simple closed curves in $S$, one can find a sequence of quasi-Fuchsian representations 
\[\rho_i:\Gamma\to PSO(3,1)^+\subset PSL(4,\Rbbb)\] 
so that the lengths of the geodesics in $\rho_i(\Gamma)\backslash\Mtd$ corresponding to both of these two simple closed curves converge to $0$ along this sequence. In particular, Theorem \ref{main theorem} does not hold on the space of quasi-Fuchsian representations. This is explained in greater detail in Section \ref{counter example}. 

As a final consequence of Theorem \ref{main theorem}, we have the following properness result.

\begin{cor}[Corollary \ref{proper}]
Let $\Cmc:=\{\gamma_1,\dots,\gamma_k\}$ be a collection of closed curves in $S$ that contains a pants decomposition, so that the complement of $\Cmc$ in $S$ is a union of discs. Then the map
\begin{eqnarray*}
Hit_n(S)&\to&\Rbbb^k\\
\rho&\mapsto&(l_\rho(\gamma_1),\dots,l_\rho(\gamma_k))
\end{eqnarray*}
is proper.
\end{cor}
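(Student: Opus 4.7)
The plan is to argue by contrapositive. Given a sequence $\{\rho_j\}_{j\geq 1}$ in $Hit_n(S)$ with $l_{\rho_j}(\gamma_i)\leq L$ for every $i,j$, I aim to extract a subsequence converging in $Hit_n(S)$.

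First I upgrade upper length bounds to two-sided bounds using Theorem \ref{main theorem}. The disc-complement hypothesis forces each simple $\gamma_i\in\Cmc$ to geometrically intersect some other curve in $\Cmc$ nontrivially: otherwise $\gamma_i$ can be isotoped disjoint from $\Cmc\setminus\{\gamma_i\}$, and a short Euler-characteristic check shows that the component of $S\setminus(\Cmc\setminus\{\gamma_i\})$ containing $\gamma_i$ cannot be decomposed into discs by $\gamma_i$ alone. Applying Theorem \ref{main theorem}(1) to such an intersecting pair yields $l_{\rho_j}(\gamma_i)>-\log(1-e^{-L/(n-1)})>0$, and for non-simple $\gamma_i\in\Cmc$, Theorem \ref{main theorem}(3) gives $l_{\rho_j}(\gamma_i)>\delta_n$. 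Fixing $X_i\in\Gamma$ representing $\gamma_i$, Labourie \cite{Lab1} implies that $\rho_j(X_i)$ is diagonalizable over $\Rbbb$ with eigenvalues of pairwise distinct moduli and product $\pm 1$; together with the two-sided length bound this forces every eigenvalue of $\rho_j(X_i)$ into a fixed compact subset of $(0,\infty)$, uniformly in $j$.

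The main obstacle is converting bounded eigenvalues of each individual $\rho_j(X_i)$ into boundedness of the joint conjugacy class $[\rho_j]\in \Hom(\Gamma,PSL(n,\Rbbb))/PSL(n,\Rbbb)$, since in general a tuple of matrices with marginally bounded conjugacy classes can leave every compact set in the character variety. To handle this I would exploit the CW-structure on $S$ with 1-skeleton $\Cmc$ and 2-cells the complementary discs: this yields a finite presentation of $\Gamma$ in which every generator is a loop in $\Cmc$ (hence a word in conjugates of the $X_i$'s) and every relator comes from a disc boundary. Combined with the Anosov property of Hitchin representations (Labourie \cite{Lab1}) and the continuous dependence on $\rho$ of the associated equivariant boundary maps into $\Rbbb\Pbbb^{n-1}$, one argues: if $\{\rho_j\}$ escapes every compact subset of $Hit_n(S)$, then after a subsequence some $l_{\rho_j}(W)$ must tend to $\infty$ for $W\in\Gamma$, and the filling property of $\Cmc$ allows $W$ to be chosen among the $X_i$'s, contradicting the compact eigenvalue bounds obtained above. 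A completely rigorous treatment of this final step would likely invoke Bonahon--Dreyer or Fock--Goncharov coordinates on $Hit_n(S)$, where length functions become tropical-polynomial expressions in global coordinates, so that bounded lengths of a filling system of curves translate into bounded coordinates and hence compactness.
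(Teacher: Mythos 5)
Your first step is sound and runs parallel to part of the paper's argument: an upper bound $l_{\rho_j}(\gamma_i)\le L$, positivity of the eigenvalues (Labourie) and unit determinant of the lift already confine all eigenvalues of $\rho_j(X_i)$ to $[e^{-L},e^{L}]$, and Theorem \ref{main theorem} supplies the lower length bounds (the paper uses Proposition \ref{main proposition}(1) in exactly this role). The genuine gap is your second step, which is the actual content of Corollary \ref{proper}. Knowing that the eigenvalues, hence conjugacy classes, of $\rho_j(X_i)$ for the finitely many curves of $\Cmc$ stay in a compact set does not bound $[\rho_j]$ in the character variety --- you say so yourself --- but the mechanism you then offer does not close the gap. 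The claim that ``the filling property of $\Cmc$ allows $W$ to be chosen among the $X_i$'s'' is precisely the statement to be proved, not a consequence of a presentation of $\Gamma$ with generators carried by $\Cmc$, of the Anosov property, or of continuity of the boundary maps: for instance, a sequence obtained by twisting more and more along a pants curve keeps every curve of $\Pmc$ at constant length, and the only reason it is not a counterexample is that the length of a transverse curve in $\Cmc$ blows up --- a quantitative fact that needs an estimate, not a soft principle. The appeal to Fock--Goncharov or Bonahon--Dreyer coordinates also does not work as stated: Fock--Goncharov coordinates require an ideal triangulation, hence punctures, and for a closed surface the assertion that bounded lengths of a filling family force bounded shear- and triangle-type parameters is not an off-the-shelf ``tropical polynomial'' fact --- it is exactly the hard implication, essentially equivalent to the corollary.

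This is where the paper does its real work. In the appendix it uses the generalized Fenchel--Nielsen coordinates of \cite{Zha1} attached to a pants decomposition $\Pmc\subset\Cmc$ (boundary invariants, gluing parameters, internal parameters) and proves four degeneration statements: a boundary invariant tending to $\infty$ makes the corresponding pants curve long; a boundary invariant tending to $0$ makes any transversally intersecting curve long (this is where the collar estimate of Proposition \ref{main proposition}(1) enters, the analogue of your first step); and, when all boundary invariants stay in a compact set, divergence of a gluing parameter or of an internal parameter forces the length of a suitable curve of $\Cmc$ to blow up, via cross-ratio estimates imported from \cite{Zha1} (Proposition \ref{internal} and the computation following it). Your proposal reproduces the easy half (eigenvalue control along $\Cmc$) but leaves the gluing- and internal-parameter cases --- the heart of the properness statement --- unproved; to complete it you would need estimates of that kind, or a precise, citable properness statement for a specific coordinate system on $Hit_n(S)$ for closed surfaces.
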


In other words, in order for a sequence $\{\rho_i\}_{i=1}^\infty$ in $Hit_n(S)$ to escape, the $\rho_i$-length of some curve in $\Cmc$ must grow to $\infty$. Refer to Section \ref{corollaries} for more corollaries of Theorem \ref{main theorem}.

{\bf Acknowledgements:} This work started as an attempt to answer a question that Scott Wolpert asked the second author in August 2013. He asked if a collar lemma exists for convex $\Rbbb\Pbbb^2$ surfaces. At that time, he and both authors were attending the ``Pressure metric and Higgs bundles" masterclass and conference held in QGM of Aarhus University. We are grateful to him for asking the question, and to QGM for their kind hospitality. We also thank Daniele Alessandrini, Ara Basmajian, Richard Canary, Francois Labourie and Anna Wienhard for many useful conversations with the authors. This work has benefitted from the GEAR network retreat 2014 and the GEAR junior retreat 2014, which were attended by both authors. It was also the focus of the GEAR graduate student internship during which the second author visited the first author at Heidelberg University. Finally, we would like to thank the referees for carefully reading this paper and suggesting several improvements.

\tableofcontents

\section{Proof of Theorem.}\label{Proof of Theorem}

In this section, we give the proof of Theorem \ref{main theorem}. We start by discussing some useful topological properties of $\Gamma$ and its boundary in Section \ref{boundary of the group}. Then for the sake of demonstrating the proof without too many technical details, we prove (1) of Theorem \ref{main theorem} for the special case of $Hit_3(S)$ in Section \ref{n=3 case}. Next, we develop the technical tools that we need in Section \ref{Hitchin properties}, and apply them in Section \ref{general} to prove Theorem \ref{main theorem} in its full generality. 

\subsection{Properties of the boundary of the group.}\label{boundary of the group}
It is well-known that $\Gamma:=\pi_1(S)$ is Gromov hyperbolic, so the Cayley graph of $\Gamma$ has a natural boundary, denoted by $\partial_\infty\Gamma$, and the action of $\Gamma$ on its Cayley graph extends to an action on $\partial_\infty\Gamma$. Moreover, if we choose $\rho\in\Tmc(S)$, i.e. a hyperbolic structure on $S$, we get a $\rho$-equivariant identification of $\partial_\infty\Gamma$ with the boundary of the Poincar\'e disc $\partial\Dbbb$. 

For any hyperbolic element $A\in PSL(2,\Rbbb)$, the \emph{axis} of $A$, denoted $L_A$, is the unique geodesic in $\Dbbb$ whose endpoints are the repelling and attracting fixed points of $A$ in $\partial\Dbbb$. The proof of the main theorem relies crucially on an important property of the action of $\Gamma$ on $\partial_\infty\Gamma$, which we state as Lemma \ref{topological}. These are well-known facts about surface groups, but for want of a good reference, we will give the proof here.

\begin{lem}\label{Ara}
Let $B$ and $B'$ be non-commuting elements in $PSL(2,\Rbbb)$ that generate a subgroup consisting only of hyperbolic isometries. If the translation lengths of $B$ and $B'$ are the same and $L_{B'}\cap L_B=\emptyset$, then $(B\cdot L_{B'})\cap L_{B'}=\emptyset$. 
\end{lem}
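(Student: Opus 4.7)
The plan is to reduce the disjointness claim to a pair of hyperbolic-trigonometric inequalities in $\ell := \ell(B) = \ell(B')$ and $d := d(L_B, L_{B'})$. First I would conjugate in $PSL(2,\Rbbb)$ so that $B'$ is the diagonal matrix $\mathrm{diag}(e^{\ell/2}, e^{-\ell/2})$ in $SL(2,\Rbbb)$, making $L_{B'}$ the imaginary axis in the upper half plane $\Hbbb$. Since $L_B$ and $L_{B'}$ are disjoint geodesics that also cannot share an ideal endpoint---if they did, $B$ and $B'$ would share a fixed point on $\partial\Hbbb$, and a direct upper-triangular computation would show that $[B, B']$ has trace $2$ and hence is parabolic or trivial, contradicting the hypothesis---we have $d > 0$. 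After possibly replacing $B$ by $B^{-1}$ and conjugating by a further diagonal element, I would arrange $L_B$ to be the semicircle with ideal endpoints $\tanh(d/2)$ and $\coth(d/2)$, so that the matrix of $B$ can be written explicitly in terms of $\ell$ and $d$.

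The first key step is to derive a sharp criterion for disjointness. Using the explicit matrix of $B$, I would compute the images $B(0)$ and $B(\infty)$ of the two ideal endpoints of $L_{B'}$ and verify that $B(0) > 0$ always, while $B(\infty) > 0$ if and only if $e^{\ell/2} > \coth(d/2)$. Since $B \cdot L_{B'}$ meets $L_{B'}$ in $\Hbbb$ exactly when its two ideal endpoints have opposite signs on $\Rbbb$, disjointness is equivalent to $e^{\ell/2}\tanh(d/2) > 1$, which a short algebraic manipulation rewrites as
\[\sinh(\ell/2)\,\sinh(d) > 1.\]

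The second key step is the Fenchel trace identity. Direct matrix multiplication gives
\[\mathrm{tr}(BB') = 2\bigl[\cosh^2(\ell/2) - \cosh(d)\sinh^2(\ell/2)\bigr].\]
Since $B$ and $B'$ do not commute, $BB'$ is a nonidentity element of $\langle B, B'\rangle$ and hence hyperbolic by hypothesis, so $|\mathrm{tr}(BB')| > 2$. The branch $\mathrm{tr}(BB') > 2$ would force $\cosh(d) < 1$, which is impossible; so $\mathrm{tr}(BB') < -2$, and this simplifies to $\sinh(\ell/2)\sinh(d/2) > 1$. Using the half-angle identity $\sinh(d) = 2\sinh(d/2)\cosh(d/2)$, I then conclude
\[\sinh(\ell/2)\sinh(d) = 2\sinh(\ell/2)\sinh(d/2)\cosh(d/2) > 2\cosh(d/2) \geq 2 > 1,\]
which is precisely the disjointness criterion. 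The main obstacle I anticipate is the first key step: correctly identifying the geometric condition and reformulating it as $\sinh(\ell/2)\sinh(d) > 1$ requires careful sign and orientation bookkeeping, while the trace identity and the final chain of inequalities are then routine.
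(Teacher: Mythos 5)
Your proposal is essentially correct, and it takes a genuinely different route from the paper. The paper argues synthetically: it takes the common perpendicular $L$ of $L_B$ and $L_{B'}$, writes $B=SR$ and $B'=S'R$ as products of reflections in $L$ and in geodesics $K$, $K'$ perpendicular to the respective axes, uses the equality of translation lengths to show that if $K$ met $L_{B'}$ then $K$ and $K'$ would have to cross, and then gets a contradiction because $B'B^{-1}=S'S$ would fix the crossing point and hence be elliptic or trivial. You instead normalize in the upper half-plane and prove the sharper quantitative statement $\sinh(\ell/2)\sinh(d/2)>1$, from which your disjointness criterion follows with room to spare; the criterion itself is right, and the equivalence $e^{\ell/2}\tanh(d/2)>1\iff\sinh(\ell/2)\sinh(d)>1$ does hold, since $\coth(d/2)-\tanh(d/2)=2/\sinh(d)$ and $x\mapsto x-x^{-1}$ is increasing. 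Both arguments use the all-hyperbolic hypothesis twice: once to rule out a shared ideal endpoint of the axes (parabolic commutator), and once on a product element ($B'B^{-1}$ in the paper, $BB'$ for you).

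One step does need repair: the identity $\mathrm{tr}(BB')=2[\cosh^2(\ell/2)-\cosh(d)\sinh^2(\ell/2)]$ is not orientation-free. Writing $c=\cosh(\ell/2)$, $s=\sinh(\ell/2)$, the element with axis endpoints $\tanh(d/2)$, $\coth(d/2)$ and translation length $\ell$ gives $\mathrm{tr}(BB')=2(c^2\mp s^2\cosh d)$, the sign depending on the direction in which $B'$ translates relative to $B$. That relative direction is no longer at your disposal: you fixed $B'=\mathrm{diag}(e^{\ell/2},e^{-\ell/2})$ at the outset and spent the freedom $B\mapsto B^{-1}$ in your first step (moreover, moving $L_B$ onto the positive half-axis may already require conjugating by $z\mapsto -1/z$, which replaces your $B'$ by $\mathrm{diag}(e^{-\ell/2},e^{\ell/2})$). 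In the unfavourable configuration the trace is $2(c^2+s^2\cosh d)>2$, so $|\mathrm{tr}(BB')|>2$ is vacuous and the branch argument yields nothing. The fix is one line: neither the hypotheses nor the conclusion involves the direction of $B'$, so you may replace $B'$ by $B'^{-1}$; equivalently, run the argument with whichever of $BB'$, $BB'^{-1}$ carries the minus sign, noting that both are non-identity elements of $\langle B,B'\rangle$ (either being trivial would force $B$ and $B'$ to commute) and hence hyperbolic. With that amendment the rest of your outline, including the final chain $\sinh(\ell/2)\sinh(d)=2\sinh(\ell/2)\sinh(d/2)\cosh(d/2)>2$, goes through.
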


\begin{proof}
Since $B$ and $B'$ do not commute, $L_B\neq L_{B'}$. Since the commutator $[B,B']$ is not parabolic, $B$ and $B'$ cannot share a fixed point. Hence, by changing coordinates and replacing $B$ and $B'$ with their inverses if necessary, we can assume that  $L_B$ and $L_{B'}$ are as in Figure \ref{hyperbolic} and $B$, $B'$ translate along their axes in the directions drawn. 

Let $L$ be the geodesic in $\Hbbb^2$ that is perpendicular to both $L_{B'}$ and $L_B$, and let $R$ be the reflection about $L$. There is a unique geodesic $K$ that is perpendicular to $L_B$ and whose distances to $L$ and $B\cdot L$ are equal. Let $S$ be the reflection about $K$, and note that $B=SR$. Also, observe that the distance between $K$ and $L$ is realized only by the points $K\cap L_B$ and $L\cap L_B$, and is half the translation length of $B$, which we denote by $T$. Furthermore, $(B\cdot L_{B'})\cap L_{B'}=(SR\cdot L_{B'})\cap L_{B'}=(S\cdot L_{B'})\cap L_{B'}$ is empty if and only if $K\cap L_{B'}$ is empty.

Thus, it is sufficient to show that $K\cap L_{B'}$ is empty. Suppose for contradiction that it is not. As before, there is a unique geodesic $K'$ so that $B'=S'R$, where $S'$ is the reflection about $K'$. Since the translation lengths of $B$ and $B'$ are the same, the symmetry between $B$ and $B'$ ensures that  $K'\cap L_{B}$ is also nonempty. 

Now, note that $K'\cap L_{B'}$ lies between $K\cap L_{B'}$ and $L\cap L_{B'}$ because 
\[d(K\cap L_{B'},L\cap L_{B'})>d(K\cap L_B,L\cap L_B)=\frac{T}{2}=d(K'\cap L_{B'},L\cap L_{B'}).\] Similarly, $K\cap L_{B}$ lies between $K'\cap L_{B}$ and $L\cap L_{B}$. This implies that $K$ and $K'$ have a common point of intersection, $p$ (see Figure \ref{hyperbolic}). Observe that $B'B^{-1}=S'RR^{-1}S^{-1}=S'S$ fixes $p$, but that is impossible because $B'B^{-1}$ is not elliptic. 
\end{proof}

\begin{figure}
\includegraphics[scale=0.6]{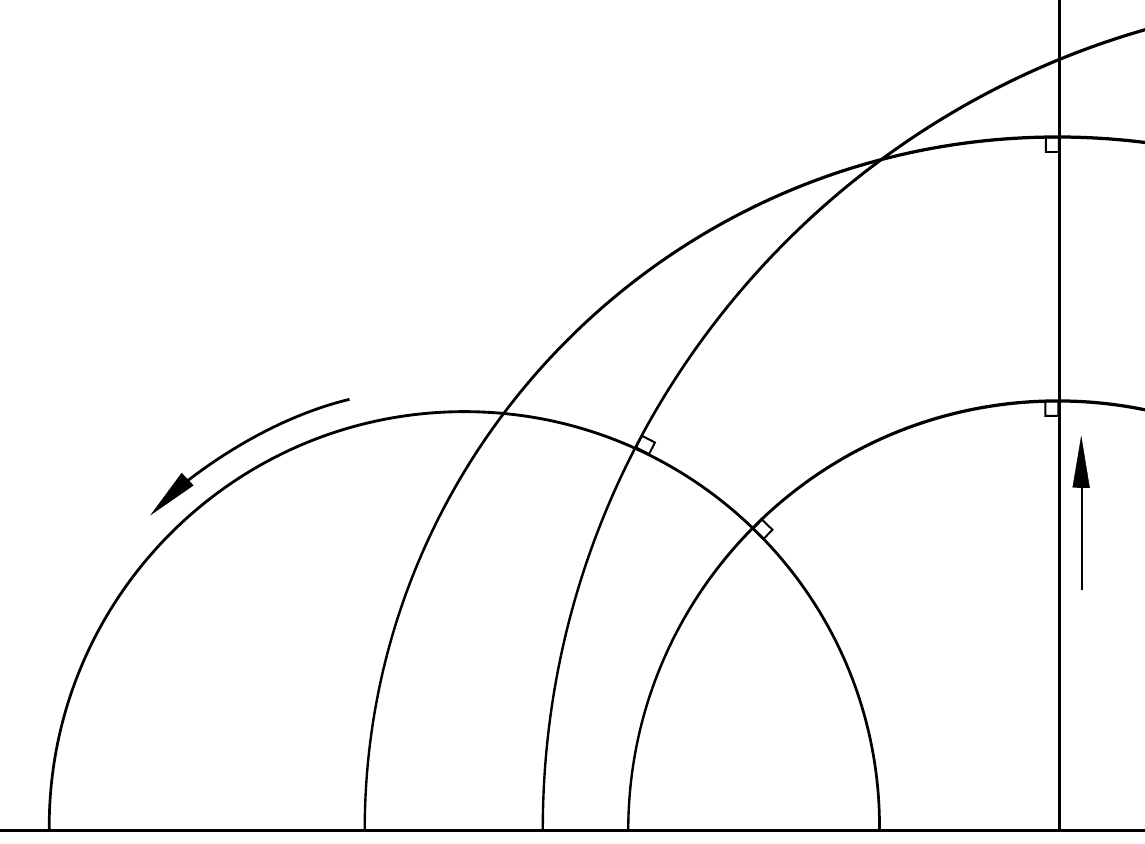}
\put (-247, 15){\makebox[0.7\textwidth][r]{$L_B$ }}
\put (-249, 55){\makebox[0.7\textwidth][r]{$B$ }}
\put (-439, 15){\makebox[0.7\textwidth][r]{$L_{B'}$ }}
\put (-400, 73){\makebox[0.7\textwidth][r]{$B'$ }}
\put (-295, 58){\makebox[0.7\textwidth][r]{$L$ }}
\put (-342, 97){\makebox[0.7\textwidth][r]{$K$ }}
\put (-312, 80){\makebox[0.7\textwidth][r]{$K'$ }}
\put (-288, 113){\makebox[0.7\textwidth][r]{$p$ }}
\caption{An impossible configuration of $K$ and $K'$ in Lemma \ref{Ara}}\label{hyperbolic}
\end{figure}

\begin{lem}\label{topological}
Let $A$, $B$, $B'$ be pairwise non-commuting elements in $\Gamma$ so that $B$ and $B'$ are conjugate. Let $a^+$, $b^+$, $b'^+$ be the attracting fixed points and $a^-$, $b^-$, $b'^-$ be the repelling fixed points of $A$, $B$, $B'$ respectively. If 
\[\ a^+,\ b'^+,\ b^+,\ a^-,\ b^-,\ b'^-\] 
lie in $\partial_\infty\Gamma$ in that cyclic order, then
\[a^+,\  b'^+,\  B\cdot a^+,\  b^+,\  a^-,\  b^-,\ B^{-1}\cdot a^+,\  b'^-\] 
lie in $\partial_\infty\Gamma$ in that cyclic order (see Figure \ref{Figure2}).
\end{lem}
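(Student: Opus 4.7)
The strategy is to work in the Poincar\'e disc by choosing a hyperbolic structure $\rho\in\Tmc(S)$, so that $\partial_\infty\Gamma$ is identified $\rho$-equivariantly with $\partial\Dbbb$ and $A,B,B'$ act as hyperbolic isometries with axes $L_A,L_B,L_{B'}$. The plan is to combine the dynamics of $B$ on $\partial_\infty\Gamma$ with Lemma \ref{Ara} applied to the pair $(B,B')$, and then run a symmetric argument with $B^{-1}$ in place of $B$.

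First I would verify the hypotheses of Lemma \ref{Ara}. From the given cyclic order, $a^+$ and $a^-$ separate $\{b^+,b'^+\}$ from $\{b^-,b'^-\}$, so $L_A$ meets both $L_B$ and $L_{B'}$; on the other hand, $b'^+$ and $b'^-$ lie in the same component of $\partial\Dbbb\setminus\{b^+,b^-\}$, namely the one not containing $a^-$, so $L_B\cap L_{B'}=\emptyset$. Since $B$ and $B'$ are conjugate in $\Gamma$ they share a common translation length, and since $\rho(\Gamma)$ is a torsion-free cocompact Fuchsian group every non-identity element is hyperbolic. Lemma \ref{Ara} therefore gives $(B\cdot L_{B'})\cap L_{B'}=\emptyset$; replacing $B$ by $B^{-1}$ (which has the same axis and translation length as $B$ and still does not commute with $B'$) likewise yields $(B^{-1}\cdot L_{B'})\cap L_{B'}=\emptyset$.

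Next I would analyze how $B$ acts on $\partial_\infty\Gamma$. Let $I_+$ be the component of $\partial_\infty\Gamma\setminus\{b^+,b^-\}$ containing $b'^+,a^+,b'^-$ and $I_-$ the one containing $a^-$. Then $B$ preserves each $I_\pm$ and, restricted to $I_+$, is an orientation-preserving homeomorphism fixing the endpoints and sliding every interior point toward the attracting fixed point $b^+$. In particular $B\cdot b'^+$ lies strictly between $b'^+$ and $b^+$ along $I_+$, hence on the side of $L_{B'}$ containing $b^+$. The disjointness $(B\cdot L_{B'})\cap L_{B'}=\emptyset$ then forces $B\cdot b'^-$ onto the same side of $L_{B'}$; combining this with the fact that $B\cdot b'^-$ lies on $I_+$ and is translated toward $b^+$ from $b'^-$ pins it to the open sub-arc from $b'^+$ to $b^+$ as well. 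Since $B$ preserves the linear order on $I_+$ and $a^+$ lies between $b'^-$ and $b'^+$ there, $B\cdot a^+$ lies strictly between $B\cdot b'^-$ and $B\cdot b'^+$, hence also in that same open sub-arc.

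Running the identical argument with $B^{-1}$ in place of $B$, whose attracting fixed point is now $b^-$, locates $B^{-1}\cdot a^+$ in the open sub-arc from $b^-$ to $b'^-$. Splicing these two placements into the given cyclic order produces exactly the order
\[a^+,\ b'^+,\ B\cdot a^+,\ b^+,\ a^-,\ b^-,\ B^{-1}\cdot a^+,\ b'^-.\]
The main obstacle will be the case analysis locating $B\cdot b'^-$: the disjointness conclusion of Lemma \ref{Ara} by itself leaves two possible arcs of $\partial_\infty\Gamma\setminus\{b'^+,b'^-\}$ where $B\cdot b'^-$ could sit, and only by combining disjointness with the explicit direction in which $B$ translates $b'^-$ along $I_+$ can one rule out the wrong arc.
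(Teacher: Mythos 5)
Your proposal is correct and follows essentially the same route as the paper: identify $\partial_\infty\Gamma$ with $\partial\Dbbb$ via a hyperbolic structure, check that the cyclic order forces $L_B\cap L_{B'}=\emptyset$ so Lemma \ref{Ara} gives $(B\cdot L_{B'})\cap L_{B'}=\emptyset$, combine this disjointness with the north--south dynamics of $B$ to pin $B\cdot b'^\pm$ (hence $B\cdot a^+$) into the arc between $b'^+$ and $b^+$, and repeat with $B^{-1}$ to place $B^{-1}\cdot a^+$ between $b^-$ and $b'^-$. The closing observation about needing the translation direction to rule out the wrong arc is exactly the point the paper handles by first noting $B\cdot b'^-\in s_0$ and $B\cdot b'^+\in s_1$.
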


\begin{proof}
Let $s_0$ be the open subsegment of $\partial_\infty\Gamma$ with endpoints $b'^-$ and $b^+$ that does not contain $b^-$, and let $s_1$ be the open subsegment of $\partial_\infty\Gamma$ with endpoints $b'^+$ and $b^+$ that does not contain $b^-$. Observe that $B\cdot b'^-$ lies in $s_0$ and $B\cdot b'^+$ lies in $s_1$. 

Choose a hyperbolic metric on $S$. This identifies $\partial_\infty\Gamma$ with $\partial\Dbbb$ and $\Gamma$ with a discrete, torsion-free subgroup of $PSL(2,\Rbbb)$. Since 
\[\ a^+,\ b'^+,\ b^+,\ a^-,\ b^-,\ b'^-\] 
lie in $\partial_\infty\Gamma$ in that cyclic order, $L_B$ and $L_{B'}$ have to be disjoint. Moreover, $B$ and $B'$ have the same translation lengths and do not commute. Hence, we can apply Lemma \ref{Ara} to conclude that $B\cdot L_{B'}$ and $L_{B'}$ are disjoint. This implies that both $B\cdot b'^-$ and $B\cdot b'^+$ have to lie in $s_1$. Since $a^+$ lies in $s_0$ between $b'^-$ and $b'^+$, $B\cdot a^+$ must lie in $s_1$ between $B\cdot b'^-$ and $B\cdot b'^+$. In particular, 
\[a^+,\  b'^+,\  B\cdot a^+,\  b^+,\  a^-\]
lie in $\partial_\infty\Gamma$ in that cyclic order (see Figure \ref{Figure2}). 

A similar argument, using $B^{-1}$ instead of $B$, shows that 
\[a^-,\  b^-,\ B^{-1}\cdot a^+,\  b'^-,\  a^+\]
lie in $\partial_\infty\Gamma$ in that cyclic order. This proves the lemma.
\end{proof}

\begin{figure}
\includegraphics[scale=0.9]{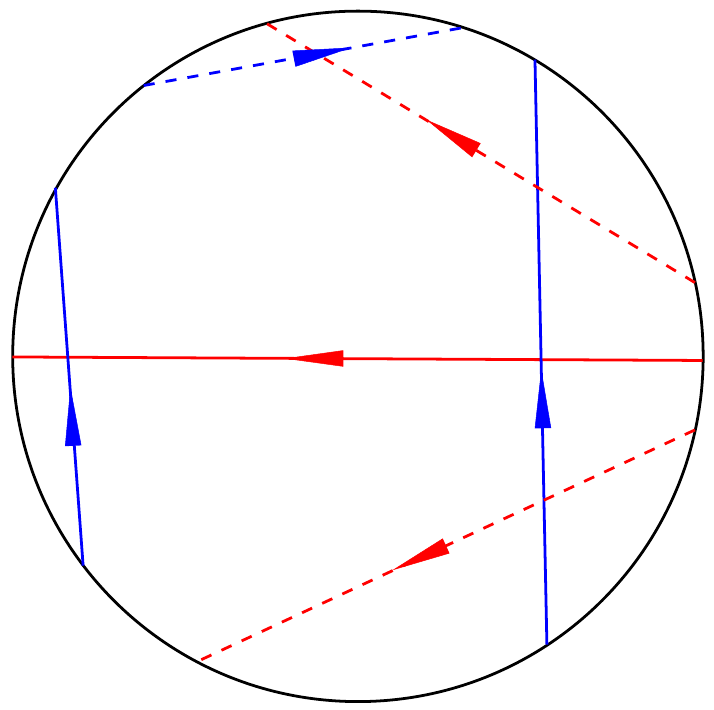}
\put (-380, 3){\makebox[0.7\textwidth][r]{$B^{-1}\cdot a^+$ }}
\put (-410, 30){\makebox[0.7\textwidth][r]{$b'^-$ }}
\put (-430, 88){\makebox[0.7\textwidth][r]{$a^+$ }}
\put (-400, 110){\makebox[0.7\textwidth][r]{$L_{B'}$ }}
\put (-417, 133){\makebox[0.7\textwidth][r]{$b'^+$ }}
\put (-369, 154){\makebox[0.7\textwidth][r]{$B\cdot b'^-$ }}
\put (-305, 166){\makebox[0.7\textwidth][r]{$B\cdot L_{B'}$ }}
\put (-356, 180){\makebox[0.7\textwidth][r]{$B\cdot a^+$ }}
\put (-289, 179){\makebox[0.7\textwidth][r]{$B\cdot b'^+$ }}
\put (-285, 171){\makebox[0.7\textwidth][r]{$b^+$ }}
\put (-224, 110){\makebox[0.7\textwidth][r]{$B\cdot a^-$ }}
\put (-237, 90){\makebox[0.7\textwidth][r]{$a^-$ }}
\put (-214, 70){\makebox[0.7\textwidth][r]{$B^{-1}\cdot a^-$ }}
\put (-280, 9){\makebox[0.7\textwidth][r]{$b^-$ }}
\put (-277, 35){\makebox[0.7\textwidth][r]{$L_B$ }}
\put (-270, 94){\makebox[0.7\textwidth][r]{$L_A$ }}
\put (-310, 138){\makebox[0.7\textwidth][r]{$B\cdot L_A$ }}
\put (-441, 135){\makebox[0.7\textwidth][r]{$s_0$ }}
\put (-377, 213){\makebox[0.7\textwidth][r]{$s_1$ }}
\caption{The cyclic order of the attracting and repelling fixed points of $A$, $B$, $B'$ and $BAB^{-1}$ along $\partial_\infty\Gamma$ in Lemma \ref{topological}.}\label{Figure2}
\end{figure}

\subsection{Proof in the $PSL(3,\Rbbb)$ case.}\label{n=3 case}
In order to demonstrate the main ideas of the proof without involving too many technicalities, we will first prove (1) of Theorem \ref{main theorem} in the special case when $n=3$, i.e. $\rho:\Gamma\to PSL(3,\Rbbb)=SL(3,\Rbbb)$ is a Hitchin representation. 

By Choi-Goldman \cite{ChoGol1}, we know that in this case, $\rho$ is the holonomy of a convex $\Rbbb\Pbbb^2$ structure on $S$. In other words, there is a strictly convex domain $\Omega_\rho$ in $\Rbbb\Pbbb^2$ which is preserved by the $\Gamma$-action on $\Rbbb\Pbbb^2$ induced by $\rho$, and on which the $\Gamma$-action is properly discontinuous and cocompact. Moreover, $\rho(X)$ is diagonalizable with positive pairwise distinct eigenvalues for any non-identity element $X\in\Gamma$, (see Theorem 3.2 of Goldman \cite{Gol1}) so $\rho(X)$ has an attracting and repelling fixed point in $\partial\Omega_\rho$. Since the Hilbert metric in $\Omega_\rho$ is invariant under projective transformations and the geodesics of the Hilbert metric are lines, one can use the \v{S}varc-Milnor lemma (Proposition 8.19 of \cite{BriHae1}) to construct a continuous map 
\[\xi^{(1)}:\partial_\infty\Gamma\to\partial\Omega_\rho\]
which identifies the attracting fixed point of any $X\in\Gamma$ to the attracting fixed point of $\rho(X)$.

\begin{figure}
\includegraphics[scale=0.9]{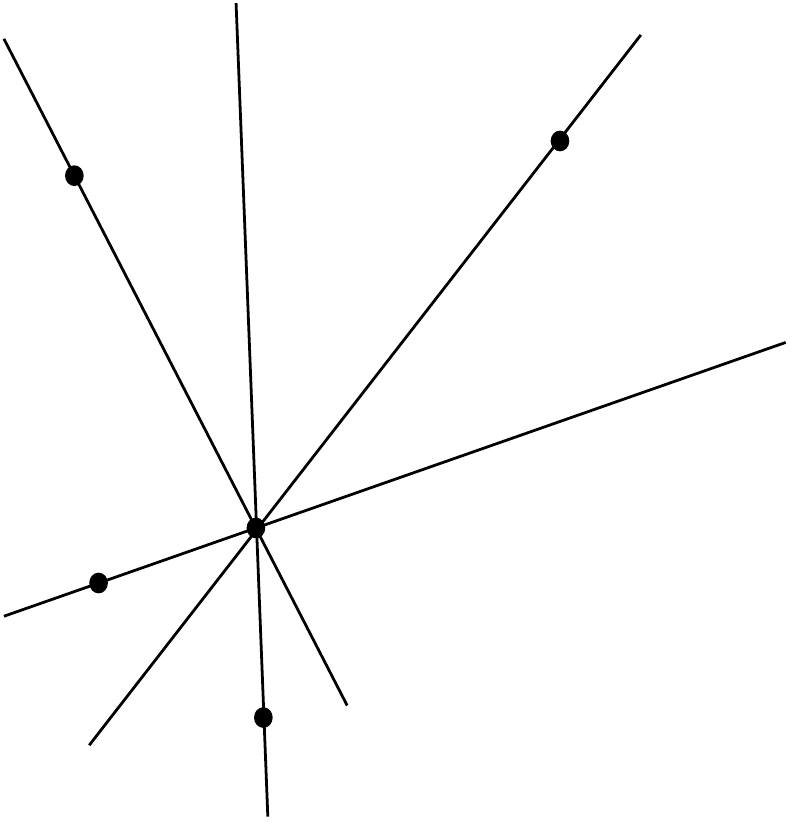}
\put (-432, 67){\makebox[0.7\textwidth][r]{$[l_4]$ }}
\put (-455, 200){\makebox[0.7\textwidth][r]{$P_1$ }}
\put (-440, 160){\makebox[0.7\textwidth][r]{$[l_1]$ }}
\put (-395, 200){\makebox[0.7\textwidth][r]{$P_2$ }}
\put (-370, 23){\makebox[0.7\textwidth][r]{$[l_2]$ }}
\put (-313, 180){\makebox[0.7\textwidth][r]{$[l_3]$ }}
\put (-280, 192){\makebox[0.7\textwidth][r]{$P_3$ }}
\put (-240, 115){\makebox[0.7\textwidth][r]{$P_4$ }}
\put (-368, 70){\makebox[0.7\textwidth][r]{$[m]$ }}
\caption{A choice of vectors $l_i$ to compute the cross ratio $(P_1,P_2,P_3,P_4)$.}\label{Figure4}
\end{figure}

Pick any four projective lines in $\Rbbb\Pbbb^2$ that intersect at a common point, such that no three of the four agree. There is a classical projective invariant of these four projective lines, called the \emph{cross ratio}, which can be defined as follows. Let the four projective lines be $P_1$, $P_2$, $P_3$, $P_4$ and let $m$ be a vector in $\Rbbb^3$ so that $[m]$, the projective point corresponding to the $\Rbbb$-span of $m$, is the common point of intersection of the $P_i$. For each $i$, choose a vector $l_i\in\Rbbb^3$ so that $[l_i]\neq[m]$ and $[l_i]$ lies in $P_i$ (see Figure \ref{Figure4}). By choosing a linear identification 
\[f:\bigwedge^3\Rbbb^3\to\Rbbb,\]
we can evaluate the expression
\[(P_1,P_2,P_3,P_4):=\frac{m\wedge l_1\wedge l_3}{m\wedge l_1\wedge l_2}\cdot\frac{m\wedge l_4\wedge l_2}{m\wedge l_4\wedge l_3}\]
as an extended real number. One can then verify that the cross ratio $(P_1,P_2,P_3,P_4)$ does not depend on the choice of $m$, $l_1$, $l_2$, $l_3$, $l_4$ or the choice of identification $f$. 

This definition of the cross ratio agrees with the classical notion of the cross ratio of four points on a line in the following way. By taking the dual, the four lines $P_1,\dots, P_4$ become four points $p_1,\dots,p_4\in(\Rbbb\Pbbb^2)^*$, and they lie in the projective line in $(\Rbbb\Pbbb^2)^*$ that corresponds to the point $[m]$ in $\Rbbb\Pbbb^2$. One can then check that $(P_1,P_2,P_3,P_4)$ is exactly the cross ratio of the four collinear points $p_1,\dots,p_4$.

\begin{proof}[Proof of (1) of Theorem \ref{main theorem} when $n=3$]
Choose orientations on $\eta$ and $\gamma$, and let $A$ and $B$ be elements in $\Gamma$ that correspond to $\eta$ and $\gamma$. Since $i(\eta,\gamma)\neq 0$, we can choose $A$ and $B$ so that if $a^+$, $b^+$ are the attracting fixed points and $a^-$, $b^-$ are the repelling fixed points of $A$ and $B$ respectively, then 
\[\ a^+,\ A\cdot b^+,\ b^+,\ a^-,\ b^-,\ A\cdot b^-\] 
lie in $\partial_\infty\Gamma$ in that cyclic order. By Lemma \ref{topological}, we see that 
\[a^+,\  A\cdot b^+,\  B\cdot a^+,\  b^+,\  a^-,\  b^-,\ B^{-1}\cdot a^+,\  A\cdot b^-\] 
lie in $\partial_\infty\Gamma$ in that cyclic order, because $A\cdot b^+$ and $A\cdot b^-$ are the attracting and repelling fixed points of $ABA^{-1}$ respectively.

Choose any $\rho\in Hit_3(S)$. For any non-identity element $X\in\Gamma$, let $\rho(X)^+$, $\rho(X)^0$, $\rho(X)^-$ be the three fixed points for $\rho(X)$, where $\rho(X)^+$ is attracting and $\rho(X)^-$ is repelling. Denote by $P_{\rho(X)}$ the line segment in $\Omega_\rho$ with endpoints $\rho(X)^+$ and $\rho(X)^-$. 

\begin{figure}
\includegraphics[scale=0.9]{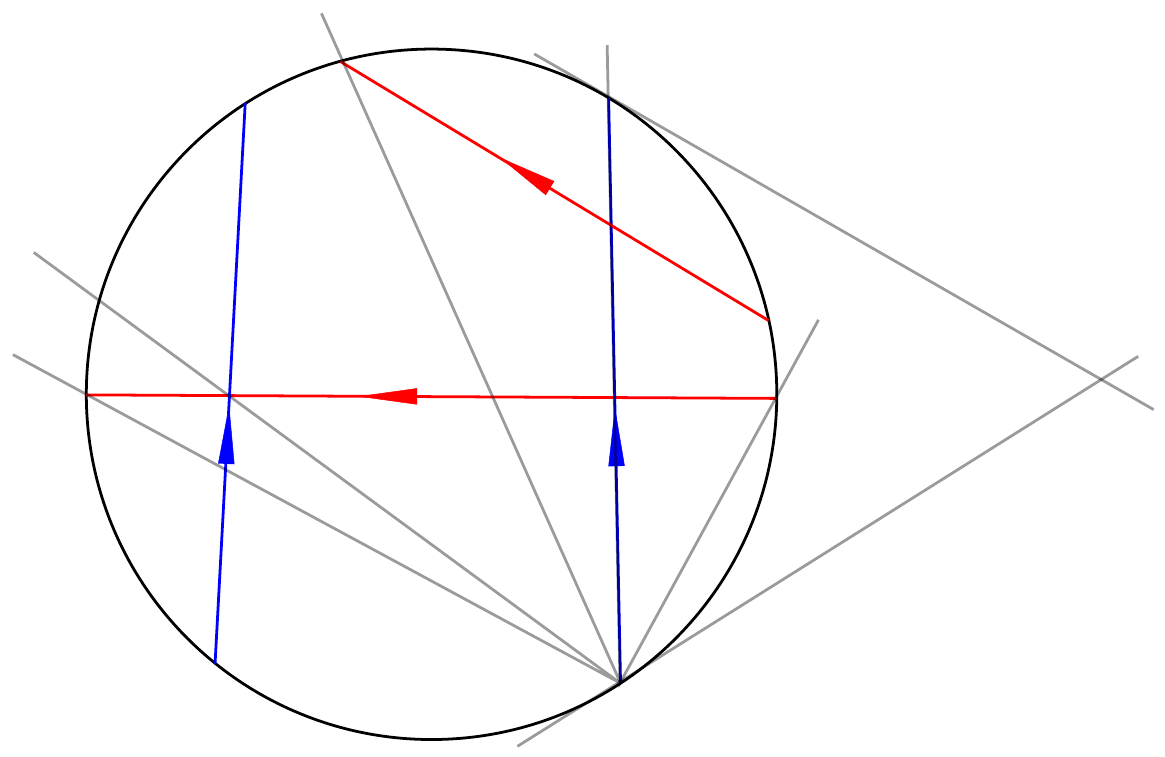}
\put (-440, 13){\makebox[0.7\textwidth][r]{$\Omega_\rho$ }}
\put (-497, 18){\makebox[0.7\textwidth][r]{$\rho(A)\cdot\rho(B)^-$ }}
\put (-531, 88){\makebox[0.7\textwidth][r]{$\rho(A)^+$ }}
\put (-488, 170){\makebox[0.7\textwidth][r]{$\rho(A)\cdot\rho(B)^+$ }}
\put (-447, 187){\makebox[0.7\textwidth][r]{$\rho(B)\cdot \rho(A)^+$ }}
\put (-365, 175){\makebox[0.7\textwidth][r]{$\rho(B)^+$ }}
\put (-293, 115){\makebox[0.7\textwidth][r]{$\rho(B)\cdot\rho(A)^-$ }}
\put (-319, 90){\makebox[0.7\textwidth][r]{$\rho(A)^-$ }}
\put (-233, 95){\makebox[0.7\textwidth][r]{$\rho(B)^0$ }}
\put (-362, 13){\makebox[0.7\textwidth][r]{$\rho(B)^-$ }}
\put (-448, 30){\makebox[0.7\textwidth][r]{$P_{\rho(ABA^{-1})}$ }}
\put (-455, 48){\makebox[0.7\textwidth][r]{$P_1$ }}
\put (-418, 50){\makebox[0.7\textwidth][r]{$P_2$ }}
\put (-418, 109){\makebox[0.7\textwidth][r]{$P_2'$ }}
\put (-438, 100){\makebox[0.7\textwidth][r]{$P_{\rho(A)}$ }}
\put (-398, 172){\makebox[0.7\textwidth][r]{$P_{\rho(BAB^{-1})}$ }}
\put (-366, 110){\makebox[0.7\textwidth][r]{$P_{\rho(B)}$ }}
\put (-366, 70){\makebox[0.7\textwidth][r]{$P_3$ }}
\put (-330, 42){\makebox[0.7\textwidth][r]{$P_3'$ }}
\caption{A schematic for the comparison between the cross ratios $(P_1,P_2,P_{\rho(B)},P_3)$ and $(P_1,P_2',P_{\rho(B)},P_3')$.}\label{Figure3}
\end{figure}

Now, let 
\begin{itemize}
\item $P_1$ be the line through $\rho(B)^-$ and $\rho(A)^+$, 
\item $P_2$ be the line through $\rho(B)^-$ and $P_{\rho(A)}\cap P_{\rho(ABA^{-1})}$, 
\item $P_3$ be the line through $\rho(B)^-$ and $\rho(A)^-$,
\item $P_2'$ be the line through $\rho(B)^-$ and $\rho(B)\cdot \rho(A)^+$,
\item $P_3'$ be the line through $\rho(B)^-$ and $\rho(B)^0$.
\end{itemize}
By using $\xi^{(1)}$ to identify $\partial_\infty\Gamma$ with $\partial\Omega_\rho$, we have that 
\[\rho(A)^+,\ \rho(A)\cdot\rho(B)^+,\ \rho(B)\cdot\rho(A)^+,\ \rho(B)^+,\ \rho(A)^-,\ \rho(B)^-\] 
lie in $\partial\Omega_\rho$ in that cyclic order (see Figure \ref{Figure3}.) It is a classically known property of the cross ratio (see Proposition \ref{useful cross ratio inequalities}) that
\[(P_1,P_2,P_{\rho(B)},P_3)> (P_1,P_2',P_{\rho(B)},P_3').\]

Let $0<\alpha_1<\alpha_2<\alpha_3$ be the eigenvalues of $\rho(A)$ and $0<\beta_1<\beta_2<\beta_3$ be the eigenvalues of $\rho(B)$. It is an easy cross ratio computation (see Lemma \ref{basic cross ratio} and Lemma \ref{cross ratio and length}) that
\[(P_1,P_2',P_{\rho(B)},P_3')=\frac{\beta_3}{\beta_3-\beta_2}.\] 
\[(P_1,P_2,P_{\rho(B)},P_3)=\frac{\alpha_3}{\alpha_1}.\] 

Hence, we have
\[\frac{\alpha_3}{\alpha_1}>\frac{\beta_3}{\beta_3-\beta_2},\]
which implies that
\[\frac{\beta_2}{\beta_3}< 1-\frac{\alpha_1}{\alpha_3}.\]
Similarly, by reversing the roles of $\rho(B)^-$ and $\rho(B)^+$, and using $\rho(B)^{-1}$ in place of $\rho(B)$, we can also show that 
\[\frac{\beta_1}{\beta_2}< 1-\frac{\alpha_1}{\alpha_3}.\]
Combining these inequalities gives 
\[\frac{\beta_1}{\beta_3}=\frac{\beta_1}{\beta_2}\cdot \frac{\beta_2}{\beta_3}<\bigg(1-\frac{\alpha_1}{\alpha_3}\bigg)^2,\]
which is equivalent to the inequality
\[\frac{\alpha_1}{\alpha_3}+\bigg(\frac{\beta_1}{\beta_3}\bigg)^{\frac{1}{2}}< 1.\]

Since $l_\rho(\eta)=\log(\frac{\alpha_3}{\alpha_1})$ and $l_\rho(\gamma)=\log(\frac{\beta_3}{\beta_1})$, (1) of Theorem \ref{main theorem} in the case when $n=3$ follows immediately.
\end{proof}

\subsection{Properties of Frenet curves of Hitchin representations.}\label{Hitchin properties}
Next, we want to generalize the proof given in Section \ref{n=3 case} to any Hitchin representation. We will devote this section to developing the tools needed to do so. In the rest of the paper, we use the same notation for points in $\Rbbb\Pbbb^{n-1}$ and for lines in $\Rbbb^n$. It should be clear from the context which we are referring to.

Denote by $\Fmc(\Rbbb^n)$ the space of complete flags in $\Rbbb^n$. Labourie \cite{Lab1} and Guichard \cite{Gui1} gave a beautiful characterization of representations in $Hit_n(S)$ as representations that admit an equivariant Frenet curve $\partial_\infty\Gamma\to\Fmc(\Rbbb^n)$. When $n=3$, the Frenet curve, post-composed with the projection from $\Fmc(\Rbbb^3)$ to $\Rbbb\Pbbb^2$, is exactly the map $\xi^{(1)}:\partial_\infty\Gamma\to\partial\Omega_\rho$ described in Section \ref{n=3 case}. This characterization will be the main tool we use to extend our proof in Section \ref{n=3 case} to the general case. 

We will start by first defining the Frenet property. 

\begin{notation}
let $\xi:S^1\to\Fmc(\Rbbb^n)$ be a continuous closed curve. For any $k=1,\dots,n-1$ and any point $x\in S^1$, let $\xi(x)^{(k)}:=\pi_k(\xi(x))$, where $\pi_k:\Fmc(\Rbbb^n)\to Gr(k,n)$ is the obvious projection.
\end{notation}

\begin{definition}
A closed curve $\xi:S^1\to\Fmc(\Rbbb^n)$ is \emph{Frenet} if the following hold:
\begin{enumerate}
\item Let $x_1,\dots,x_k$ be pairwise distinct points in $S^1$ and let $n_1,\dots,n_k$ be positive integers so that $\sum_{i=1}^k n_i=n$. Then 
\[\sum_{i=1}^k\xi(x_i)^{(n_i)}=\Rbbb^n.\]
\item Let $x_1,\dots,x_k$ be pairwise distinct points in $S^1$ and let $n_1,\dots,n_k$ be positive integers so that $m:=\sum_{i=1}^k n_i\leq n$. Then for any $x\in S^1$, 
\[\underset{x_i\neq x_j,\forall i\neq j}{\lim_{x_i\to x,\forall i}}\sum_{i=1}^k\xi(x_i)^{(n_i)}=\xi(x)^{(m)}.\]
\end{enumerate}
\end{definition}

The Frenet property ensures $\xi$ has good continuity properties and is ``maximally transverse". Combining the work of Labourie (Theorem 1.4 of \cite{Lab1}) and Guichard (Th\'eor\`eme 1 of \cite{Gui1}), one can characterize the representations in the $Hit_n(S)$ as those that preserve an equivariant Frenet curve.

\begin{thm}[Guichard, Labourie] 
A representation $\rho$ in the character variety 
\[Hom(\Gamma,PSL(n,\Rbbb))/ PSL(n,\Rbbb)\]
lies in $Hit_n(S)$ if and only if there exists a $\rho$-equivariant Frenet curve $\xi:\partial_\infty\Gamma\to\Fmc(\Rbbb^n)$. If $\xi$ exists, then it is unique.
\end{thm}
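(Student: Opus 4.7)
My plan is to treat the two implications and uniqueness separately, following the strategies of Labourie and Guichard respectively.

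For the forward direction (Hitchin representation implies the existence of an equivariant Frenet curve), the natural starting point is the Fuchsian locus. For $\rho_0 \in \Tmc(S) \subset Hit_n(S)$, the Veronese embedding $\iota_n: PSL(2,\Rbbb) \to PSL(n,\Rbbb)$ applied to the boundary identification $\partial_\infty \Gamma \cong \partial \Dbbb \cong \Rbbb\Pbbb^1$ produces an equivariant map to $\Fmc(\Rbbb^n)$, and the Frenet property for this map is a direct consequence of the Vandermonde nonvanishing property of the Veronese curve. The idea is then to propagate this property throughout $Hit_n(S)$. Concretely, I would use the fact (due to Labourie) that Hitchin representations are Anosov with respect to the minimal parabolic subgroup of $PSL(n,\Rbbb)$, which yields continuous equivariant limit maps $\xi^{(k)}: \partial_\infty\Gamma \to Gr(k,n)$ assembling into a flag curve. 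The Anosov property, being open and closed on the component containing the Fuchsian locus, guarantees these maps exist for every $\rho \in Hit_n(S)$. The remaining work is to verify that the transversality axiom (1) and the osculating limit axiom (2) of the Frenet property hold; these come from the dynamical contraction/expansion properties of the Anosov flow, together with the hyperconvexity analysis carried out in Labourie's paper.

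For the converse (Frenet curve implies Hitchin), I would follow Guichard's strategy and argue that the subset
\[\mathcal{H} := \{[\rho] \in Hom(\Gamma,PSL(n,\Rbbb))/PSL(n,\Rbbb) : \rho \text{ admits a } \rho\text{-equivariant Frenet curve}\}\]
is both open and closed in the character variety and intersects $Hit_n(S)$. Closedness follows because the Frenet conditions are closed transversality/limit conditions and the Arzel\`a--Ascoli theorem applied to a family of equicontinuous Frenet curves produces a limiting Frenet curve (this is where uniform control on the flag curves coming from the hyperbolicity of $\Gamma$ is essential). Openness requires more care: starting from a Frenet curve, one shows the nearby representations still preserve a Frenet curve by perturbing the flag curve using the implicit function theorem in a suitable functional setting, crucially using the positivity/transversality guaranteed by axiom (1). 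Combined with connectedness of $Hit_n(S)$ and the fact that $\mathcal{H}$ meets the Fuchsian locus, this forces $\mathcal{H} \supseteq Hit_n(S)$; conversely, one must verify no other component of the character variety meets $\mathcal{H}$, which follows from showing that existence of a Frenet curve forces $\rho$ to be discrete, faithful, and to have diagonalizable images with distinct eigenvalues, pinning the representation down to a Hitchin component.

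For uniqueness, the plan is first to show that for any nontrivial $X \in \Gamma$ with attracting fixed point $x^+ \in \partial_\infty \Gamma$, the flag $\xi(x^+)$ must be the unique attracting fixed flag of $\rho(X)$ in $\Fmc(\Rbbb^n)$: equivariance forces $\xi(x^+) = \rho(X)\cdot \xi(x^+)$, and Frenet transversality with nearby points rules out any other invariant flag (each $\xi(x^+)^{(k)}$ must be the $k$-dimensional attracting invariant subspace spanned by eigenvectors for the top $k$ eigenvalues, forced by the limit axiom). Since the fixed points of hyperbolic elements are dense in $\partial_\infty \Gamma$, the map $\xi$ is determined on a dense set, and continuity (built into the Frenet property via axiom (2)) extends it uniquely to all of $\partial_\infty \Gamma$.

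The main obstacle will be openness of $\mathcal{H}$ in the character variety; this is where the bulk of Guichard's technical work lies, since one needs quantitative control on how the Frenet curve varies with the representation, and such control is not immediate from the pointwise axioms. The Anosov framework provides the cleanest route, reducing openness of $\mathcal{H}$ to openness of the Anosov condition, which is classical.
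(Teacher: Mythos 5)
The paper does not prove this theorem: it is imported verbatim from the literature, with the forward direction attributed to Labourie (Theorem 1.4 of \cite{Lab1}) and the converse to Guichard (Th\'eor\`eme 1 of \cite{Gui1}), so there is no internal argument to compare yours against. Your outline correctly reproduces the standard division of labor (Anosov/limit-map machinery for the forward direction, Guichard's analysis for the converse, and a fixed-flag plus density argument for uniqueness), and the uniqueness paragraph is essentially a complete and correct proof: equivariance and the Frenet axioms force $\xi(x^+)$ to be the attracting flag of $\rho(X)$, attracting fixed points are dense in $\partial_\infty\Gamma$, and continuity does the rest.

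The two main implications, however, remain plans rather than proofs, and a couple of the specific mechanisms you propose are not the ones that actually work. For the forward direction, knowing that Hitchin representations are Borel--Anosov gives you transverse limit maps $\xi^{(k)}$, but neither the hyperconvexity axiom (1) for more than two points nor the osculation axiom (2) follows from contraction/expansion of the flow alone; establishing these is the bulk of Labourie's argument and cannot be absorbed into the phrase ``together with the hyperconvexity analysis.'' For the converse, the open-and-closed scheme in the full character variety is shakier than you suggest: closedness of the set of representations admitting Frenet curves is delicate (Arzel\`a--Ascoli needs an equicontinuity input that must be extracted from the geometry, and limits of Frenet curves can a priori degenerate, losing injectivity or transversality), and openness is obtained from structural stability of the Anosov condition, not from an implicit-function-theorem perturbation of the curve. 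Moreover, even granting open-and-closedness, one still has to rule out that some non-Hitchin component carries Frenet curves; your last sentence gestures at this (discreteness, faithfulness, loxodromy) but those properties do not by themselves pin down the component, and this exclusion is precisely the deep content of Guichard's paper. So as a blind reconstruction your proposal identifies the right skeleton, but the substantive steps are deferred exactly where the cited papers do the work.
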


We will now prove several properties of these Frenet curves that will be needed. These are special cases of more general properties that appear in Section 2 of \cite{Zha1}. However, for the sake of completeness, we will reproduce the proofs.

\begin{lem}\label{important lemma}
Let $a,m_0,b,m_1,m_2,m_3$ be pairwise distinct points on $\partial_\infty\Gamma$ in that cyclic order and let $\rho\in Hit_n(S)$ with corresponding Frenet curve $\xi$. Also, let $P:=\Pbbb(\xi(a)^{(1)}+\xi(b)^{(1)})$. Then the following hold:
\begin{enumerate}
\item Let $k_0, k_1, k_2, k_3$, be non-negative integers that sum to $n-2$ and let $M:=\sum_{i=0}^3\xi(m_i)^{(k_i)}$. The map
\[f_M:\partial_\infty\Gamma\to P\]
given by
\[f_M:x\mapsto\begin{cases}
\Pbbb(\xi(x)^{(1)}+\sum_{i=0}^3\xi(m_i)^{(k_i)})\cap P&\text{ if }x\neq m_j\\
\Pbbb(\xi(m_j)^{(k_j+1)}+\sum_{i\neq j}\xi(m_i)^{(k_i)})\cap P&\text{ if }x=m_j\\
\end{cases}\]
is a homeomorphism with $f_M(a)=\xi(a)^{(1)}$ and $f_M(b)=\xi(b)^{(1)}$.
\item Let $k_0,k_1,k_2$ be non-negative integers that sum to $n-1$, and let $s$ be the closed subsegment of $\partial_\infty\Gamma$ with endpoints $a$, $b$ that does not contain $m_0$. Also, let $M:=\xi(m_0)^{(k_0)}$. Then there is some closed subsegment $\omega$ of $P$ with endpoints $\xi(a)^{(1)}$, $\xi(b)^{(1)}$ so that the map
\[g_M:s\to\omega\]
given by
\[g_M:x\mapsto\begin{cases}
\Pbbb(\xi(x)^{(k_2)}+\xi(m_1)^{(k_1)}+\xi(m_0)^{(k_0)})\cap P&\text{ if }x\neq m_1\\
\Pbbb(\xi(m_1)^{(k_1+k_2)}+\xi(m_0)^{(k_0)})\cap P&\text{ if }x=m_1\\
\end{cases}\]
is a homeomorphism with $g_M(a)=\xi(a)^{(1)}$ and $g_M(b)=\xi(b)^{(1)}$.
\end{enumerate}
\end{lem}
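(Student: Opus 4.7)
The plan is to treat both parts uniformly. For each $x$ in the domain, let $H_x \subset \Rbbb^n$ denote the linear subspace whose projectivization is intersected with $P$ in the definition of the map. The proof reduces to showing (i) $H_x$ is an $(n-1)$-dimensional hyperplane, (ii) $P \not\subset H_x$ so that $\Pbbb(H_x) \cap P$ is a single projective point, (iii) the resulting map is continuous and injective, and (iv) the stated boundary values at $a$ and $b$ hold. Surjectivity then comes for free from elementary topology: a continuous injection $S^1 \to S^1$ is a homeomorphism (Part (1), after identifying $\partial_\infty\Gamma$ and $P\cong\Rbbb\Pbbb^1$ with $S^1$), and a continuous injection from a closed arc into $P$ is a homeomorphism onto a closed subarc whose endpoints are the images of the endpoints (Part (2)).

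Items (i) and (ii) are bookkeeping with the first Frenet property. For Part (1) with $x$ distinct from all $m_j$, applying Frenet (1) to the distinct points $\{a, x, m_0, m_1, m_2, m_3\}$ with exponents $\{1, 1, k_0, k_1, k_2, k_3\}$ summing to $n$ (dropping any $m_j$ with $k_j = 0$) yields $\xi(a)^{(1)} + H_x = \Rbbb^n$; this forces $\dim H_x = n-1$ and $\xi(a)^{(1)} \not\subset H_x$, and the same argument with $b$ in place of $a$ shows $\xi(b)^{(1)} \not\subset H_x$, hence $P \not\subset H_x$. The degenerate case $x = m_j$ uses the same scheme on $\{a, m_0, m_1, m_2, m_3\}$ with $k_j$ replaced by $k_j + 1$, still summing to $n$. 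Part (2) is analogous, applying Frenet (1) to $\{a, x, m_1, m_0\}$ (and to $\{b, x, m_1, m_0\}$) with exponents summing to $n$ to rule out containment of $\xi(a)^{(1)}$ and $\xi(b)^{(1)}$ in $H_x$. An identical Frenet (1) application gives $P \cap M = 0$ in Part (1), which will be needed for injectivity.

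Continuity away from the degenerate values is immediate from continuity of $\xi$ and of transverse intersection in Grassmannians. At a degenerate value, it is precisely the second Frenet property that provides the limit $\xi(x)^{(1)} + \xi(m_j)^{(k_j)} \to \xi(m_j)^{(k_j+1)}$ as $x \to m_j$ in Part (1) (and analogously $\xi(x)^{(k_2)} + \xi(m_1)^{(k_1)} \to \xi(m_1)^{(k_1+k_2)}$ in Part (2)), so $H_x$ converges to the subspace defining the value of the map at $m_j$, and the transverse intersection with $P$ varies continuously. The stated boundary values hold because $\xi(a)^{(1)} \subset P \cap H_a$ and the intersection is one-dimensional. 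For injectivity in Part (1), assume $f_M(x) = f_M(y)$ with $x \neq y$. If $H_x \neq H_y$, they meet in $M$ by a dimension count, contradicting the identity $P \cap M = 0$ together with a shared nonzero intersection inside $P$; if $H_x = H_y$, then $\xi(x)^{(1)} + \xi(y)^{(1)} + M$ has dimension at most $n-1$, contradicting Frenet (1) on these distinct points with exponents summing to $n$. Part (2) is handled by the same dichotomy (assuming $k_2 \geq 1$, which is implicit in the claim that $\omega$ has distinct endpoints). The main obstacle is purely notational: keeping careful track of the replacement prescriptions in the definitions of $f_M$ and $g_M$ so that the limiting form of the Frenet condition matches the algebraic formula at the degenerate values, and handling the edge cases where some $k_i$ vanish.
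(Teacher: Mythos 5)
Your treatment of part (1) matches the paper's (continuity from the Frenet property, injectivity by forcing $\xi(x)^{(1)}+M=\xi(y)^{(1)}+M$ and contradicting Frenet (1), then circle topology), and your well-definedness, continuity and boundary-value bookkeeping is fine. The genuine gap is in part (2), at the injectivity of $g_M$, which is the only place in the lemma where real work is needed, and your claim that it is ``handled by the same dichotomy'' fails as soon as $k_2\geq 2$. In the branch $H_x=H_y$ your Frenet (1) application is misstated, since the exponents $k_2,k_2,k_1,k_0$ sum to $n-1+k_2>n$ (this branch is actually vacuous: $\xi(y)^{(1)}+H_x=\Rbbb^n$ already shows $H_x\neq H_y$, so it is fixable). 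The serious problem is the branch $H_x\neq H_y$: the dimension count gives $\dim(H_x\cap H_y)=n-2$, whereas the only subspace of it you control, $\xi(m_0)^{(k_0)}+\xi(m_1)^{(k_1)}$, has dimension $n-1-k_2$, which is strictly smaller than $n-2$ once $k_2\geq 2$. Thus $H_x\cap H_y$ contains extra directions, for instance the $(k_2-1)$-dimensional space $\xi(x)^{(k_2)}\cap H_y$, which is not a flag subspace of the curve, and no Frenet transversality statement tells you that these extra directions miss $P$. But $H_x\cap H_y\cap P=0$ is precisely the assertion that $g_M(x)\neq g_M(y)$, so for $k_2\geq 2$ your argument assumes exactly what has to be proved. (Your parenthetical that $k_2\geq 1$ must be implicitly assumed is correct, but only rescues the case $k_2=1$, where indeed $H_x\cap H_y=\xi(m_0)^{(k_0)}+\xi(m_1)^{(k_1)}$ and your argument works; the lemma is stated and used for general $k_2$.)

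The paper closes this gap not by linear algebra but by using part (1) as an order-theoretic tool. For $a,x',x,b$ in that order along $s$ and $i=1,\dots,k_2$, it sets $M_i:=\xi(x)^{(i-1)}+\xi(x')^{(k_2-i)}+\xi(m_1)^{(k_1)}+\xi(m_0)^{(k_0)}$, an $(n-2)$-dimensional subspace of the type appearing in part (1). Each $f_{M_i}$ is a homeomorphism of circles sending $a,b$ to $\xi(a)^{(1)},\xi(b)^{(1)}$, hence respects the cyclic order, so $f_{M_i}(x)$ lies strictly between $f_{M_i}(x')$ and $\xi(b)^{(1)}$; moreover one has the telescoping identity $f_{M_i}(x)=f_{M_{i+1}}(x')$, together with $g_M(x')=f_{M_1}(x')$ and $g_M(x)=f_{M_{k_2}}(x)$. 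Chaining these facts moves the points monotonically toward $\xi(b)^{(1)}$ and yields $g_M(x)\neq g_M(x')$. Some interpolation of this kind, replacing $\xi(x')^{(k_2)}$ by $\xi(x)^{(k_2)}$ one dimension at a time and invoking the homeomorphism statement of part (1) at each step, is the missing idea in your proposal.
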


\begin{proof}
Before we start the proof, observe that for any non-negative integers $t_0,\dots,t_4$ so that $\sum_{i=0}^4t_i=n-1$, the intersection $\Pbbb(\xi(x)^{(t_4)}+\sum_{i=0}^3\xi(m_i)^{(t_i)})\cap P$ is a single point, otherwise $\Pbbb(\xi(a)^{(1)}+\xi(b)^{(1)})\subset\Pbbb(\xi(x)^{(t_4)}+\sum_{i=0}^3\xi(m_i)^{(t_i)})$, which contradicts the Frenet property of $\xi$.

Proof of (1). Since $\xi$ is Frenet, $f_M$ is continuous. Moreover, because the domain and target of $f_M$ are both topologically a circle, it is sufficient to show that $f_M$ is injective. Suppose for contradiction that there exist $x\neq x'$ such that $f_M(x)=f_M(x')$. We will assume that $x,x'\neq m_i$ for all $i=0,1,2,3$ as the other cases are similar. Then 
\begin{eqnarray*}
\sum_{i=0}^3\xi(m_i)^{(k_i)}+\xi(x)^{(1)}&=&\sum_{i=0}^3\xi(m_i)^{(k_i)}+f_M(x)\\
&=&\sum_{i=0}^3\xi(m_i)^{(k_i)}+f_M(x')\\
&=&\sum_{i=0}^3\xi(m_i)^{(k_i)}+\xi(x')^{(1)},
\end{eqnarray*}
which is impossible because $\xi$ is Frenet. The fact that $f_M(a)=\xi(a)^{(1)}$ and $f_M(b)=\xi(b)^{(1)}$ is easily verified.

Proof of (2). First, observe that $g_M$ viewed as a map from $s$ to $P$ is continuous. Also, for any $x$ in $s$, $g_M(x)=\xi(a)^{(1)}$ if and only if $x=a$ and $g_M(x)=\xi(b)^{(1)}$ if and only if $x=b$. This proves that the image of $g_M$ is a subsegment $\omega$ of $P$ with endpoints $\xi(a)^{(1)}$, $\xi(b)^{(1)}$. 

To finish the proof, we only need to show that $g_M$ is injective. Choose $x$, $x'$ in the interior of $s$ with $x\neq x'$, and assume without loss of generality that $a,x',x,b$ lie along $s$ in that order. Again, we assume that $x,x'\neq m_1$ as the other cases are similar. For any positive integer $i\leq k_2$, let 
\[M_i:=\xi(x)^{(i-1)}+\xi(x')^{(k_2-i)}+\xi(m_1)^{(k_1)}+\xi(m_0)^{(k_0)}.\] 
By (1), we know that $f_{M_i}(x)$ lies on $\omega$ strictly between $f_{M_i}(x')$ and $f_{M_i}(b)=\xi(b)^{(1)}$. Also, observe that $f_{M_i}(x)=f_{M_{i+1}}(x')$. This implies that $f_{M_{k_2}}(x)$ lies on $\omega$ strictly between $f_{M_1}(x')$ and $\xi(b)^{(1)}$. In particular, $g_M(x)=f_{M_{k_2}}(x)\neq f_{M_1}(x')=g_M(x')$, so $g_M$ is injective.
\end{proof}

In the proof of the $n=3$ case given in Section \ref{n=3 case}, the classical cross ratio in $\Rbbb\Pbbb^2$ was the main computational tool used to obtain our estimates. We will now define a generalization of the cross ratio for $\Rbbb\Pbbb^{n-1}$.

\begin{definition}\label{cross ratio definition}
Let $P_1,\dots,P_4$ be four hyperplanes in $\Rbbb^n$ that intersect along a $(n-2)$-dimensional subspace $M=\Span\{m_1,\dots,m_{n-2}\}\subset\Rbbb^n$, so that no three of the four $P_i$ agree. For $i=1,\dots,4$, let $L_i=[l_i]$ be a line through the origin in $P_i$ that does not lie in $M$. Define the \emph{cross ratio} by
\[(P_1,P_2,P_3,P_4):=\frac{m_1\wedge\dots\wedge m_{n-2}\wedge l_1\wedge l_3\cdot m_1\wedge\dots\wedge m_{n-2}\wedge l_4\wedge l_2}{m_1\wedge\dots\wedge m_{n-2}\wedge l_1\wedge l_2\cdot m_1\wedge\dots\wedge m_{n-2}\wedge l_4\wedge l_3}.\]
\end{definition}

In the above definition, choose an identification between $\bigwedge^n(\Rbbb^n)$ and $\Rbbb$ to evaluate the fraction on the right as a real number. One can check that this number does not depend on the identification chosen, the choice of basis $\{m_1,\dots,m_{n-2}\}$ for $M$, the choice of $L_i$ in $P_i$, or the choice of representatives $l_i$ for $L_i$. When convenient, we sometimes use the notation 
\[(L_1,L_2,L_3,L_4)_M:=(P_1,P_2,P_3,P_4).\] 
Also, at times, in our notation for the cross ratio, we replace the subspaces $L_i$, $P_i$ and $M$ of $\Rbbb^n$  with their projectivizations. As with the $n=3$ case, this definition of the cross ratio agrees with the classical cross ratio of four points along a projective line in $(\Rbbb\Pbbb^{n-1})^*$.

The following two lemmas summarizes some basic properties of this cross ratio.

\begin{lem}\label{basic cross ratio}
Let $L_1,\dots,L_5$ be pairwise distinct lines in $\Rbbb^n$ through $0$ and let $M$, $M'$ be $(n-2)$-dimensional subspaces of $\Rbbb^n$ not containing $L_i$ for any $i=1,\dots,5$, so that no three of the five $M+L_i$ agree and no three of the five $M'+L_i$ agree.
\begin{enumerate}
\item For any $X\in PSL(n,\Rbbb)$, $(X\cdot L_1,\dots, X\cdot L_4)_{X\cdot M}=(L_1,\dots L_4)_M$.
\item Suppose $L_1,L_2,L_3,L_4$ lie in a plane. Then 
\[(L_1,L_2,L_3,L_4)_M=(L_1,L_2,L_3,L_4)_{M'}.\]
\item $(L_1,L_2,L_3,L_4)_M=(L_4,L_3,L_2,L_1)_M$.
\item $(L_1,L_2,L_3,L_5)_M\cdot(L_1,L_3,L_4,L_5)_M=(L_1,L_2,L_4,L_5)_M$.
\item $(L_1,L_2,L_3,L_4)_M\cdot (L_1,L_3,L_2,L_4)_M=1$.
\item $(L_1,L_2,L_3,L_4)_M=1-(L_1,L_2,L_4,L_3)_M$.
\end{enumerate}
\end{lem}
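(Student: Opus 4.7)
The strategy is to reduce every identity to a manipulation of wedge products in $\bigwedge^n\Rbbb^n$. After fixing a basis $\{m_1,\ldots,m_{n-2}\}$ of $M$ and a linear identification $\bigwedge^n\Rbbb^n\cong\Rbbb$, the quantity $\langle l_i,l_j\rangle_M:=m_1\wedge\cdots\wedge m_{n-2}\wedge l_i\wedge l_j$ becomes a scalar that is bilinear and alternating in $(l_i,l_j)$, and the cross ratio becomes a ratio of four such scalars. None of the six identities will require more than this, plus one small dimension argument for (2).

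Identities (1), (3), (4), and (5) require no substantive computation. For (1), applying $X\in PSL(n,\Rbbb)$ multiplies each of the four wedges by $\det(X)$, which cancels in the ratio. Identity (3) follows by merely reordering factors in the fraction. Identity (4) telescopes: in the product of defining fractions, the factors $\langle l_1,l_3\rangle_M$ and $\langle l_5,l_3\rangle_M$ each appear once in a numerator and once in a denominator, leaving precisely the fraction for $(L_1,L_2,L_4,L_5)_M$. Identity (5) is immediate since interchanging $L_2$ and $L_3$ literally replaces the defining ratio by its reciprocal.

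For (6), I extend $\{m_1,\ldots,m_{n-2}\}$ to a basis of $\Rbbb^n$ by adjoining representatives $l_1\in L_1$ and $l_3\in L_3$; this is possible because $L_1\ne L_3$ and neither lies in $M$. Writing $l_2=u_1l_1+u_3l_3+\mu$ and $l_4=v_1l_1+v_3l_3+\nu$ with $\mu,\nu\in M$ and setting $\Omega:=\langle l_1,l_3\rangle_M$, the alternating property gives
\[\langle l_1,l_2\rangle_M=u_3\,\Omega,\qquad \langle l_4,l_3\rangle_M=v_1\,\Omega,\qquad \langle l_4,l_2\rangle_M=(v_1u_3-v_3u_1)\,\Omega.\]
Substituting into Definition \ref{cross ratio definition} yields $(L_1,L_2,L_3,L_4)_M=1-\tfrac{v_3u_1}{u_3v_1}$, while the same computation with $L_3$ and $L_4$ interchanged gives $(L_1,L_2,L_4,L_3)_M=\tfrac{v_3u_1}{u_3v_1}$, establishing (6).

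Identity (2) is the only one with genuine geometric content, and its one nontrivial step is the main obstacle. Let $\Pi$ be the $2$-plane containing $L_1,\ldots,L_4$. I first argue that the non-degeneracy hypothesis (no three $M+L_i$ agree) forces $M\cap\Pi=0$: otherwise $M+\Pi$ would have dimension at most $n-1$ and would coincide with each $M+L_i$, making all four hyperplanes equal. Given $M\cap\Pi=0$, choose a generator $\omega$ of $\bigwedge^2\Pi$ and write $l_i\wedge l_j=c_{ij}\omega$ for scalars $c_{ij}$ that depend only on the $L_i$. Then each $\langle l_i,l_j\rangle_M$ equals $c_{ij}V_M$ for the single nonzero scalar $V_M=m_1\wedge\cdots\wedge m_{n-2}\wedge\omega$, so the cross ratio collapses to $c_{13}c_{42}/(c_{12}c_{43})$, which is manifestly independent of $M$. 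The same formula applies with $M'$ in place of $M$, yielding (2). All remaining pieces of the lemma are formal bookkeeping; the only place I expect to spend real care is this dimension-count justifying $M\cap\Pi=0$.
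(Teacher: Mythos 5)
Your proposal is correct, and for items (1), (3), (4), (5) it coincides with the paper's treatment (there these are simply declared immediate from the definition; your determinant-cancellation and telescoping remarks are exactly the intended verifications). For (6) you and the paper do essentially the same generic-case computation, only in different coordinates: the paper fixes a basis with $M=\mathrm{Span}\{e_1,\dots,e_{n-2}\}$, $L_1=[e_{n-1}]$, $L_4=[e_n]$, $L_2=[\sum e_i]$, $L_3=[\sum\alpha_ie_i]$, while you expand $l_2,l_4$ in terms of $l_1,l_3$ modulo $M$. One small caveat: your justification for extending $\{m_1,\dots,m_{n-2}\}$ by $l_1,l_3$ to a basis is not quite the right reason --- what you need is $L_3\not\subset M+L_1$, i.e.\ $M+L_1\neq M+L_3$, and the hypotheses do allow two of the hyperplanes to coincide; the paper sidesteps exactly these degenerate configurations by assuming the $M+L_i$ pairwise distinct and saying ``the other cases are similar,'' so you should either add the same restriction or note that when $M+L_1=M+L_3$ both sides degenerate compatibly ($0=1-1$). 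Where you genuinely diverge is (2): the paper produces a projective transformation fixing $L_1,L_2,L_3$ and carrying $M$ to $M'$, observes it must also fix $L_4$ since $L_4$ lies in the plane spanned by the others, and invokes (1); you instead prove $M\cap\Pi=0$ by a dimension count and factor each wedge as $c_{ij}\,(m_1\wedge\dots\wedge m_{n-2}\wedge\omega)$ with $\omega$ a generator of $\bigwedge^2\Pi$, so the cross ratio collapses to $c_{13}c_{42}/(c_{12}c_{43})$. Your route is more self-contained --- it avoids constructing the transformation and exhibits explicitly that for coplanar lines the quantity is the classical cross ratio intrinsic to the plane $\Pi$ --- whereas the paper's argument is shorter once (1) is in hand; both are valid.
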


\begin{proof}
(1), (3), (4) and (5) follow immediately from the definition of the cross ratio. To prove (2), observe that there is a projective transformation $X$ that fixes $L_1$, $L_2$, $L_3$ and maps $M$ to $M'$. Since $L_4$ lies in the plane containing $L_1$, $L_2$ and $L_3$, $X$ must also fix $L_4$. This allows us to use (1) to get (2). 

To prove (6), assume that $M+L_1,\dots,M+L_4$ are pairwise distinct; the other cases are similar. Choose a basis $e_1,\dots,e_n$ for $\Rbbb^n$ so that 
\[M=\Span\{e_1,\dots,e_{n-2}\},\ L_1=[e_{n-1}],\ L_4=[e_n],\ L_2=[\sum_{i=1}^ne_i],\ L_3=[\sum_{i=1}^n\alpha_ie_i]\]
for some real numbers $\alpha_1,\dots,\alpha_n$. The assumption that $M+L_1,\dots,M+L_4$ are pairwise distinct implies that $\alpha_{n-1}$ and $\alpha_n$ are non-zero real numbers. One can then easily compute that 
\[(L_1,L_2,L_3,L_4)_M=\frac{\alpha_n}{\alpha_{n-1}}\text{ and }(L_1,L_2,L_4,L_3)_M=\frac{\alpha_{n-1}-\alpha_n}{\alpha_{n-1}}.\]
\end{proof}

In view of (2) of Lemma \ref{basic cross ratio}, we will denote $(L_1,L_2,L_3,L_4)_M$ by $(L_1,L_2,L_3,L_4)$ in the case when $L_1$, $L_2$, $L_3$, $L_4$ lie in the same plane.

\begin{lem}\label{cross ratio and length}
Let $X\in PSL(n,\Rbbb)$ be diagonalizable with $n$ real eigenvalues $\lambda_1,\dots,\lambda_n$ of pairwise distinct moduli, so that $|\lambda_1|<\dots<|\lambda_n|$. Let $L_i$ and $L_j$ be fixed lines through the origin in $\Rbbb^n$ corresponding to the eigenvalues $\lambda_i$ and $\lambda_j$ respectively, with $i<j$, and let $L$ be a line through the origin in the plane $L_i+L_j$ so that $L_i\neq L\neq L_j$. Then
\[(L_i,L,X\cdot L,L_j)=\frac{\lambda_j}{\lambda_i}.\]
\end{lem}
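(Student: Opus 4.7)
The plan is to reduce to an essentially two-dimensional computation inside the plane $L_i+L_j$ and then compute the relevant wedge products directly. Since $L_i$ and $L_j$ are eigenlines of $X$, the plane $L_i+L_j$ is $X$-invariant, and by part (2) of Lemma \ref{basic cross ratio} the cross ratio $(L_i,L,X\cdot L,L_j)$ does not depend on the auxiliary $(n-2)$-dimensional subspace $M$ used to compute it. This lets us choose $M$ to be any convenient complement to $L_i+L_j$, and the full computation then takes place in terms of a clean basis.

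Concretely, I would fix eigenvectors $e_i\in L_i$ and $e_j\in L_j$ so that $Xe_i=\lambda_i e_i$ and $Xe_j=\lambda_j e_j$. Since $L\subset L_i+L_j$ and $L\neq L_i,L_j$, after rescaling $e_j$ I may assume $L=[e_i+e_j]$, and then $X\cdot L=[\lambda_i e_i+\lambda_j e_j]$. Extend $\{e_i,e_j\}$ to a basis $e_1,\dots,e_n$ of $\Rbbb^n$ (in the notation of the hypotheses, $e_i$ and $e_j$ play the roles of $e_{n-1}$ and $e_n$ in the proof of Lemma \ref{basic cross ratio}(6)), and take $M=\Span\{e_1,\dots,\widehat{e_i},\dots,\widehat{e_j},\dots,e_n\}$ with its given basis.

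Plugging into Definition \ref{cross ratio definition} with $l_1=e_i$, $l_2=e_i+e_j$, $l_3=\lambda_i e_i+\lambda_j e_j$, $l_4=e_j$, I compute
\[
\begin{aligned}
m_1\wedge\cdots\wedge m_{n-2}\wedge l_1\wedge l_3 &= \lambda_j\,\Omega, \\
m_1\wedge\cdots\wedge m_{n-2}\wedge l_1\wedge l_2 &= \Omega, \\
m_1\wedge\cdots\wedge m_{n-2}\wedge l_4\wedge l_2 &= -\Omega, \\
m_1\wedge\cdots\wedge m_{n-2}\wedge l_4\wedge l_3 &= -\lambda_i\,\Omega,
\end{aligned}
\]
where $\Omega$ denotes the fixed top form on $\Rbbb^n$ coming from the ordered basis. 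Forming the ratio in the definition yields $\lambda_j/\lambda_i$, as required.

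There is no real obstacle here: the content is that the $X$-invariance of $L_i+L_j$ reduces everything to a 2D calculation, and independence of the cross ratio from the complementary $M$ (Lemma \ref{basic cross ratio}(2)) lets me pick the most convenient $M$. The only care needed is to scale $e_j$ so that $L=[e_i+e_j]$ has the simple form that makes $X\cdot L=[\lambda_i e_i+\lambda_j e_j]$ equally simple; after that the wedge-product computation is a one-line routine check analogous to the end of the proof of Lemma \ref{basic cross ratio}(6).
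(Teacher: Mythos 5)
Your proof is correct and follows essentially the same route as the paper: pass to an eigenbasis, take $M$ spanned by the remaining eigenvectors, normalize so that $L=[e_i+e_j]$ (the paper does this via a diagonal projective transformation, you by rescaling $e_j$ — the same normalization), and evaluate the cross ratio from Definition \ref{cross ratio definition}; your wedge-product computation checks out and fills in what the paper leaves as "an easy computation."
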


\begin{proof}
Choose a basis $e_1,\dots,e_n$ for $\Rbbb^n$ so that $[e_k]$ is a fixed line through the origin of $\rho(X)$ corresponding to the eigenvalue $\lambda_k$.  In this basis, $\rho(X)$ is the diagonal matrix $[x_{u,v}]$, where
\[x_{u,v}=\left\{\begin{array}{lll}
0 & \text{if} &u\neq v\\
\lambda_u &\text{if} &u=v.
\end{array}\right.\]
Let $M$ be the $n-2$ dimensional subspace $\Span\{e_1,\dots,\hat{e}_i,\dots,\hat{e}_j,\dots,e_n\}$ of $\Rbbb^n$. Via a projective transformation that fixes $e_1,\dots,e_n$, we can assume $L=[e_i+e_j]$. The lemma follows from an easy computation using the cross ratio definition.
\end{proof}

The next task is to understand how the cross ratio interacts with Frenet curves. 

\begin{prop}\label{useful cross ratio inequalities}
Let $\rho\in Hit_n(S)$ and let $\xi$ be the corresponding Frenet curve. Also, let $a,b,c,m_0,d,m_1$ be pairwise distinct points along $\partial_\infty\Gamma$, in that cyclic order, and let $k_0$, $k_1$ be non-negative integers that sum to $n-2$. For any $x\in\partial_\infty\Gamma$, define
\[P_x=\left\{\begin{array}{lll} 
\xi(x)^{(1)}+\xi(m_0)^{(k_0)}+\xi(m_1)^{(k_1)} &\text{if} & x\neq m_0,m_1\\
\xi(m_i)^{(k_i+1)}+\xi(m_{1-i})^{(k_{1-i})}& \text{if} &x=m_i\\
\end{array}\right.\]
Then the following hold:
\begin{enumerate}
\item $(P_a,P_b,P_{m_0},P_d)>(P_a,P_b,P_{m_0},P_{m_1}).$
\item $(P_a,P_b,P_{m_0},P_d)>(P_a,P_c,P_{m_0},P_d).$
\end{enumerate}
\end{prop}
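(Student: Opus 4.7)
The plan is to exploit the fact that every hyperplane $P_x$ contains the common $(n-2)$-dimensional subspace $M := \xi(m_0)^{(k_0)} + \xi(m_1)^{(k_1)}$, and to push the configuration forward to a projective line $P$ in $\Rbbb\Pbbb^{n-1}$ transversal to $\Pbbb(M)$. Once this is done, both cross ratios will become cross ratios of four points on $P \cong \Rbbb\Pbbb^1$, and the inequalities will follow from monotonicity of the classical cross ratio.

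I will take $P := \Pbbb(\xi(a)^{(1)} + \xi(d)^{(1)})$. Axiom (1) of the Frenet property ensures that $\xi(a)^{(1)} + \xi(d)^{(1)} + M = \Rbbb^n$ is a direct sum, so $P$ is transversal to $\Pbbb(M)$, and each hyperplane $P_x$ meets $P$ in a single point $\ell_x$. To confirm that the map $x \mapsto \ell_x$ respects the cyclic order on $\partial_\infty\Gamma$, I plan to apply Lemma \ref{important lemma}(1) with the lemma's distinguished points $a, b$ taken to be our $a, d$, with $m_0, m_1$ carrying the exponents $k_0, k_1$, and with two auxiliary spacer points inserted on the arc from $m_1$ back to $a$ carrying exponent $0$. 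The proof of Lemma \ref{important lemma}(1) should go through verbatim in this degenerate setting because the needed transversalities still follow from axiom (1) of the Frenet property; it will produce a homeomorphism $\partial_\infty\Gamma \to P$ given by $x \mapsto \ell_x$. In particular, $\ell_a, \ell_b, \ell_c, \ell_{m_0}, \ell_d, \ell_{m_1}$ will lie on $P$ in the stated cyclic order. Next, by choosing the line representatives $L_i := \ell_{x_i}$ in Definition \ref{cross ratio definition} and expanding in a basis of $\Rbbb^n$ adapted to the decomposition $M \oplus (\xi(a)^{(1)} + \xi(d)^{(1)})$, the $(n-2)$-fold wedge factors involving $M$ will cancel, yielding
\[(P_{x_1}, P_{x_2}, P_{x_3}, P_{x_4})_M = (\ell_{x_1}, \ell_{x_2}, \ell_{x_3}, \ell_{x_4}),\]
the classical cross ratio of four points on $P$. (Equivalently, this is Lemma \ref{basic cross ratio}(2) plus a computation.)

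To conclude, I will parametrize $P$ so that $\ell_{m_1} = \infty$ and $\ell_a, \ell_b, \ell_c, \ell_{m_0}, \ell_d$ correspond to real numbers $t_a < t_b < t_c < t_{m_0} < t_d$. Using the formula $(t_1, t_2, t_3, t_4) = \tfrac{(t_3 - t_1)(t_2 - t_4)}{(t_2 - t_1)(t_3 - t_4)}$ obtained directly from Definition \ref{cross ratio definition}, a short calculation reduces (1) to the inequality $\tfrac{t_d - t_b}{t_d - t_{m_0}} > 1$, which holds because $t_b < t_{m_0} < t_d$, and reduces (2) to $\tfrac{(t_d - t_b)(t_c - t_a)}{(t_d - t_c)(t_b - t_a)} > 1$, which holds termwise because $t_b < t_c$ (so the numerator exceeds the denominator factor by factor). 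The main technical point is the degenerate invocation of Lemma \ref{important lemma}(1) with two of its four m-exponents equal to zero; once that is justified, the remainder of the argument is a routine reduction to classical projective geometry on $\Rbbb\Pbbb^1$.
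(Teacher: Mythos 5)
Your proposal is correct and follows essentially the same route as the paper: the paper also reduces to a projective line (it uses $\Pbbb(\xi(a)^{(1)}+\xi(b)^{(1)})$ rather than $\Pbbb(\xi(a)^{(1)}+\xi(d)^{(1)})$), invokes Lemma \ref{important lemma}(1) to order the intersection points of the hyperplanes with that line, and then concludes by an explicit classical cross-ratio computation. Note that your invocation of Lemma \ref{important lemma}(1) is not actually degenerate, since that lemma already allows the exponents $k_i$ to be zero; beyond this, your normalization sending $\ell_{m_1}$ to infinity is only a cosmetic variant of the paper's affine parametrization $0<\alpha<\beta<\gamma<1$.
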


\begin{proof}
We will only show the proof of (1); the same proof together with the Lemma \ref{basic cross ratio} gives (2). Let
\begin{eqnarray*}
L_{m_0}&=&P_{m_0}\cap\bigg(\xi(a)^{(1)}+\xi(b)^{(1)}\bigg),\\
L_{m_1}&=&P_{m_1}\cap\bigg(\xi(a)^{(1)}+\xi(b)^{(1)}\bigg),\\
L_d&=&P_d\cap\bigg(\xi(a)^{(1)}+\xi(b)^{(1)}\bigg).
\end{eqnarray*}

Choose vectors $l_{m_0},l_{m_1},l_a,l_b,l_d$ in $\Rbbb^n$ such that 
\[[l_{m_0}]=L_{m_0},\ [l_{m_1}]=L_{m_1},\ [l_a]=\xi(a)^{(1)},\ [l_b]=\xi(b)^{(1)},\ [l_d]=L_d.\] 
By (1) of Lemma \ref{important lemma}, we can ensure, by replacing each $l_i$ with $-l_i$ if necessary, that 
\begin{eqnarray*}
l_{m_0}&=&\alpha l_a+(1-\alpha)l_b,\\ 
l_d&=&\beta l_a+(1-\beta)l_b,\\ 
l_{m_1}&=&\gamma l_a+(1-\gamma)l_b
\end{eqnarray*}
for $0<\alpha<\beta<\gamma<1$. Then one can compute
\begin{eqnarray*}
(P_a,P_b,P_{m_0},P_d)&=&\frac{1-\alpha}{1-\frac{\alpha}{\beta}}\\
&>&\frac{1-\alpha}{1-\frac{\alpha}{\gamma}}\\
&=&(P_a,P_b,P_{m_0},P_{m_1})
\end{eqnarray*}
\end{proof}

\subsection{Proof in the $PSL(n,\Rbbb)$ case.}\label{general}
We will now use the technical facts established in Section \ref{Hitchin properties} to prove Theorem \ref{main theorem}. For the rest of this section, fix $\rho\in Hit_n(S)$ and let $\xi$ be its corresponding Frenet curve. By a theorem of Labourie (Theorem 1.5 of \cite{Lab1}), we know that for every non-identity element $X\in\Gamma$, $\rho(X)\in PSL(n,\Rbbb)$ has a lift to $SL(n,\Rbbb)$ that is diagonalizable with positive pairwise distinct eigenvalues. These eigenvalues will also be referred to as the eigenvalues of $\rho(X)$. 

The next lemma is the main computation in the proof of Theorem \ref{main theorem}.

\begin{lem}\label{main computation}
Let $B$ be a non-identity element in $\Gamma$ and let $b^-$, $b^+$ be the repelling and attracting fixed points of $B$ respectively. Pick $k=0,\dots,n-2$, and for any $x\in\partial_\infty\Gamma$, define
\[P_x=P^{(k)}_x:=\left\{\begin{array}{lll}
\xi(b^-)^{(k)}+\xi(b^+)^{(n-k-2)}+\xi(x)^{(1)}&\text{if}&x\neq b^-,b^+\\
\xi(b^-)^{(k+1)}+\xi(b^+)^{(n-k-2)}&\text{if}& x=b^-\\
\xi(b^-)^{(k)}+\xi(b^+)^{(n-k-1)}&\text{if}& x=b^+\\
\end{array}\right.\] 
Suppose that $x_1$, $x_2$, $x_3$ are points in $\partial_\infty\Gamma$ so that 
\[x_1,\  x_2,\  B\cdot x_1,\  b^+,\  x_3,\  b^-\] 
lie on $\partial_\infty\Gamma$, in that cyclic order. Then
\[(P_{x_1},P_{x_2}, P_{b^+},P_{x_3})>\frac{\beta_{k+2}}{\beta_{k+2}-\beta_{k+1}},\]
where $0<\beta_1<\dots<\beta_n$ are the eigenvalues of $\rho(B)$.
\end{lem}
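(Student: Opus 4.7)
The plan is to recognize the quantity $\beta_{k+2}/(\beta_{k+2}-\beta_{k+1})$ as an explicit cross ratio computed from the eigen-data of $\rho(B)$, and then chain this with two applications of the monotonicity statement in Proposition~\ref{useful cross ratio inequalities}.

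First I would exploit the fixed-point structure: since $\xi$ is $\rho$-equivariant and Frenet, and $b^\pm$ are the fixed points of $B$, the flag $\xi(b^-)$ (resp.\ $\xi(b^+)$) consists of the direct sums of eigenspaces of $\rho(B)$ corresponding to the $j$ smallest (resp.\ largest) eigenvalues $\beta_1<\dots<\beta_n$. Consequently the $(n-2)$-dimensional subspace $M:=\xi(b^-)^{(k)}+\xi(b^+)^{(n-k-2)}$ is $\rho(B)$-invariant, and every $P_x$ is a hyperplane containing $M$. The induced action of $\rho(B)$ on the two-dimensional quotient $\Rbbb^n/M$ has precisely the eigenvalues $\beta_{k+1},\beta_{k+2}$, with eigenlines $P_{b^-}/M$ and $P_{b^+}/M$ respectively, and it carries the line $P_{x_1}/M$ to the line $P_{B\cdot x_1}/M$ by equivariance of $\xi$.

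With this picture in place, Lemma~\ref{cross ratio and length}, applied in the quotient (equivalently, lifting to any two-dimensional complement of $M$ in $\Rbbb^n$), yields the baseline identity
\[
(P_{b^-},P_{x_1},P_{B\cdot x_1},P_{b^+})=\frac{\beta_{k+2}}{\beta_{k+1}}.
\]
Then I would use the algebraic identities in Lemma~\ref{basic cross ratio} (in order: (5) to swap the middle entries and invert, (6) to subtract from $1$, (3) to reverse, and (5) again) to rewrite this as
\[
(P_{x_1},P_{B\cdot x_1},P_{b^+},P_{b^-})=\frac{\beta_{k+2}}{\beta_{k+2}-\beta_{k+1}}.
\]

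Finally, I would invoke Proposition~\ref{useful cross ratio inequalities} twice, with $m_0=b^+$, $m_1=b^-$ (so that $k_0=n-k-2$ and $k_1=k$ match our setup, and the cyclic order $a,b,c,m_0,d,m_1$ is realized by $x_1,x_2,B\cdot x_1,b^+,x_3,b^-$). Part~(2), with $a=x_1,b=x_2,c=B\cdot x_1,d=x_3$, gives
\[
(P_{x_1},P_{x_2},P_{b^+},P_{x_3})>(P_{x_1},P_{B\cdot x_1},P_{b^+},P_{x_3}),
\]
and part~(1), with $a=x_1,b=B\cdot x_1,d=x_3$ and any auxiliary $c$ strictly between $B\cdot x_1$ and $b^+$, gives
\[
(P_{x_1},P_{B\cdot x_1},P_{b^+},P_{x_3})>(P_{x_1},P_{B\cdot x_1},P_{b^+},P_{b^-}).
\]
Chaining these with the identity above yields the lemma. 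The one subtle point, which I consider the main obstacle, is Step~3: one must be careful that the four hyperplanes involved all contain $M$ so that the cross ratio is computed in the correct two-plane, and one must verify that Lemma~\ref{cross ratio and length} transfers from the $n$-dimensional statement to this quotient picture without sign or ordering ambiguity; the algebraic manipulations using Lemma~\ref{basic cross ratio} are routine but must be performed in exactly the right order so that no sign flips are missed.
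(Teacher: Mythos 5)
Your proposal is correct and follows essentially the same route as the paper: two applications of Proposition~\ref{useful cross ratio inequalities} to reduce to $(P_{x_1},P_{B\cdot x_1},P_{b^+},P_{b^-})$, followed by an eigenvalue computation of that cross ratio via Lemma~\ref{cross ratio and length} and the identities of Lemma~\ref{basic cross ratio}. The only cosmetic differences are that you evaluate the cross ratio in the quotient $\Rbbb^n/M$ (the paper instead intersects with the $\rho(B)$-invariant plane $\xi(b^-)^{(k+2)}\cap\xi(b^+)^{(n-k)}$, which is the invariant complement your argument implicitly needs) and you carry out the symmetry identities in a slightly different order, arriving at the same value $\beta_{k+2}/(\beta_{k+2}-\beta_{k+1})$.
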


\begin{proof}
By Proposition \ref{useful cross ratio inequalities} and (5), (6) of Lemma \ref{basic cross ratio}, we have
\begin{eqnarray}
(P_{x_1},P_{x_2},P_{b^+},P_{x_3})&>&(P_{x_1},P_{B\cdot x_1},P_{b^+},P_{b^-})\nonumber\\
&=&\frac{1}{(P_{x_1},P_{b^+},P_{B\cdot x_1},P_{b^-})}\nonumber\\
&=&\frac{1}{1-(P_{b^+},P_{x_1},P_{B\cdot x_1},P_{b^-})}\label{B inequality}\\
&=&\frac{(P_{b^+},P_{B\cdot x_1},P_{x_1},P_{b^-})}{(P_{b^+},P_{B\cdot x_1},P_{x_1},P_{b^-})-1}\nonumber
\end{eqnarray}

Note that for all $j=1,\dots,n$, 
\[L_j:=\xi(b^-)^{(j)}\cap\xi(b^+)^{(n-j+1)}\]
is the fixed line through the origin in $\Rbbb^n$ of $\rho(B)$ corresponding to the eigenvalue $\beta_j$. Also, observe that $P_{b^+}$ and $P_{b^-}$ intersect the plane $\xi(b^-)^{(k+2)}\cap\xi(b^+)^{(n-k)}$ at $L_{k+2}$ and $L_{k+1}$ respectively. Let 
\[L:=P_{x_1}\cap(\xi(b^-)^{(k+2)}\cap\xi(b^+)^{(n-k)}),\] 
and it is clear that $P_{B\cdot x_1}\cap(\xi(b^-)^{(k+2)}\cap\xi(b^+)^{(n-k)})=\rho(B)\cdot L$. Thus, we can use Lemma \ref{cross ratio and length}, to conclude that 
\[(P_{b^+},P_{B\cdot x_1},P_{x_1},P_{b^-})=(L_{k+2},\rho(B)\cdot L,L,L_{k+1})=\frac{\beta_{k+2}}{\beta_{k+1}}.\]
Combining this with inequality (\ref{B inequality}) proves the lemma.
\end{proof}

Applying Lemma \ref{main computation} to our setting, we obtain the following proposition.

\begin{prop}\label{main proposition}
Let $A$, $B$ be elements in $\Gamma$ so that $a^+$, $b^+$, $a^-$, $b^-$ lie in $\partial_\infty\Gamma$ in that cyclic order. Here, $a^+$, $b^+$ are the attracting fixed points and $a^-$, $b^-$ are the repelling fixed points for $A$, $B$ respectively. Also, let $\alpha_1<\dots<\alpha_n$, $\beta_1<\dots<\beta_n$ be the eigenvalues of $\rho(A)$, $\rho(B)$ respectively. For every $k=0,\dots,n-2$, the following hold:
\begin{enumerate}
\item \[\frac{\alpha_n}{\alpha_1}>\frac{\beta_{k+2}}{\beta_{k+2}-\beta_{k+1}}.\]
\item Let $\eta$, $\gamma$ be closed curves in $S$ corresponding to $A$ and $B$ respectively. If $\gamma$ is simple, then  
\[\frac{\alpha_n}{\alpha_1}> \bigg(\frac{\beta_{k+2}}{\beta_{k+2}-\beta_{k+1}}\bigg)^u\cdot\bigg(\frac{\beta_{n-k}}{\beta_{n-k}-\beta_{n-k-1}}\bigg)^{i(\eta,\gamma)-u}\]
for some non-negative integer $u\leq i(\eta,\gamma)$ that is independent of $k$.
\end{enumerate}
\end{prop}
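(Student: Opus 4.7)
The plan is to pair a cross-ratio lower bound from Lemma \ref{main computation} with an exact cross-ratio evaluation at $\alpha_n/\alpha_1$ coming from Lemma \ref{cross ratio and length}, then compare the two via cross-ratio monotonicity. Since $a^+, b^+, a^-, b^-$ lie in that cyclic order and $A$ contracts $\partial_\infty\Gamma$ toward $a^+$, the images $A\cdot b^\pm$ produce the cyclic order $a^+, A\cdot b^+, b^+, a^-, b^-, A\cdot b^-$. Applying Lemma \ref{topological} to the conjugate pair $B$ and $B' := ABA^{-1}$ (whose attracting and repelling fixed points are $A\cdot b^+$ and $A\cdot b^-$) yields the refined cyclic order
\[a^+,\ A\cdot b^+,\ B\cdot a^+,\ b^+,\ a^-,\ b^-,\ B^{-1}\cdot a^+,\ A\cdot b^-\]
on $\partial_\infty\Gamma$.

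For Part (1), set $M := \xi(b^-)^{(k)} + \xi(b^+)^{(n-k-2)}$ and consider the 2-plane $\pi := \xi(a^+)^{(1)} + \xi(a^-)^{(1)}$, which is $\rho(A)$-invariant with eigenvalues $\alpha_n,\alpha_1$ and is transverse to $M$ by the Frenet property (the exponents $1,1,k,n{-}k{-}2$ sum to $n$). Let $q := P_{b^+}\cap\pi$ and define the auxiliary hyperplane $P_* := M + \rho(A)\cdot q$; then Lemma \ref{cross ratio and length} applied to the pair $\xi(a^\pm)^{(1)}$ in $\pi$, together with Lemma \ref{basic cross ratio}(2),(3), gives $(P_{a^+}, P_*, P_{b^+}, P_{a^-}) = \alpha_n/\alpha_1$. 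On the other hand, the interior computation inside the proof of Lemma \ref{main computation} identifies $(P_{a^+}, P_{Ba^+}, P_{b^+}, P_{b^-}) = \beta_{k+2}/(\beta_{k+2}-\beta_{k+1})$ exactly. The pencil of hyperplanes containing $M$ is a projective line, and using the cyclic order above, the monotonicity of $x\mapsto \bar P_x$ on each arc of $\partial_\infty\Gamma \setminus \{b^+, b^-\}$ (a variant of Lemma \ref{important lemma}(2)), together with the fact that $\rho(A)\cdot q$ lies strictly between $q$ and $\xi(a^+)^{(1)}$ on $\Pbbb(\pi)$, the classes $\bar P_{a^+}, \bar P_*, \bar P_{Ba^+}, \bar P_{b^+}, \bar P_{b^-}, \bar P_{a^-}$ lie in that cyclic order on the pencil. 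Two cross-ratio monotonicity comparisons -- one moving $P_*$ to $P_{Ba^+}$ in the second slot, and Proposition \ref{useful cross ratio inequalities}(1) moving $P_{a^-}$ to $P_{b^-}$ in the fourth slot -- chain together to give
\[\frac{\alpha_n}{\alpha_1} = (P_{a^+}, P_*, P_{b^+}, P_{a^-}) > (P_{a^+}, P_{Ba^+}, P_{b^+}, P_{a^-}) > (P_{a^+}, P_{Ba^+}, P_{b^+}, P_{b^-}) = \frac{\beta_{k+2}}{\beta_{k+2}-\beta_{k+1}},\]
establishing (1).

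For Part (2), when $\gamma$ is simple the axes of distinct conjugates of $B^{\pm 1}$ that cross the axis of $A$ in the universal cover are pairwise disjoint. Enumerate them along the axis of $A$ as $B_1,\ldots,B_{u+v}$, with $u$ conjugate to $B$ and $v = i(\eta,\gamma)-u$ conjugate to $B^{-1}$. For each $B_i$ the single-crossing argument from Part (1) produces a cross-ratio factor; applying the product identity Lemma \ref{basic cross ratio}(4) along the sequence of intermediate hyperplanes $P_{B_{i_1}B_{i_2}\cdots B_{i_j}\cdot a^+}$ that interpolate between $P_{a^+}$ and $P_{a^-}$ telescopes the factors into the advertised product inequality. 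A $B^{-1}$-crossing contributes $\beta_{n-k}/(\beta_{n-k}-\beta_{n-k-1})$, obtained by substituting the eigenvalues $\beta_n^{-1} < \cdots < \beta_1^{-1}$ of $B^{-1}$ into the Part (1) estimate and reindexing.

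The main obstacle will be Part (1): pinning down the precise cyclic position of the auxiliary class $\bar P_*$ relative to $\bar P_{Ba^+}$ on the pencil is the technical heart of the argument, as it requires combining the Frenet property (which governs how $\bar P_x$ varies with $x$) with the $\rho(A)$-dynamics on the invariant 2-plane $\pi$. The telescoping setup in Part (2) is a careful bookkeeping exercise layered on top of this core estimate.
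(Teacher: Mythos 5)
Your overall architecture for (1) is the paper's: realize $\alpha_n/\alpha_1$ as a cross ratio on the $\rho(A)$-invariant plane $\pi=\xi(a^+)^{(1)}+\xi(a^-)^{(1)}$ via Lemma \ref{cross ratio and length}, and compare it with the hyperplane cross ratio that Lemma \ref{main computation} bounds below by $\beta_{k+2}/(\beta_{k+2}-\beta_{k+1})$. But the step you yourself flag as ``the technical heart'' is exactly where the argument is missing, and the justification you offer does not close it. You claim the cyclic order $P_{a^+},\,P_*,\,P_{B\cdot a^+},\,P_{b^+},\,P_{b^-},\,P_{a^-}$ on the pencil from two facts: order-preservation of $x\mapsto P_x$ (Lemma \ref{important lemma}) and the hyperbolic dynamics of $\rho(A)$ on $\Pbbb(\pi)$, i.e.\ that $\rho(A)\cdot q$ lies strictly between $q$ and $\xi(a^+)^{(1)}$. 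The second fact only places $\rho(A)\cdot q$ somewhere on the arc of $\Pbbb(\pi)$ from $q=P_{b^+}\cap\Pbbb(\pi)$ to $\xi(a^+)^{(1)}$; but the images of $B\cdot a^+$ and $A\cdot b^+$ under the homeomorphism of Lemma \ref{important lemma}(1) also lie on that arc, and nothing you cite prevents $\rho(A)\cdot q$ from landing between $q$ and the point corresponding to $B\cdot a^+$, which would destroy the first inequality in your chain. The missing ingredient is the equivariance identification
\[\rho(A)\cdot q=\Pbbb\big(\xi(A\cdot b^-)^{(k)}+\xi(A\cdot b^+)^{(n-k-1)}\big)\cap\Pbbb(\pi),\]
which turns $\rho(A)\cdot q$ into a boundary-type intersection point attached to the translated pair $(A\cdot b^-,A\cdot b^+)$; one then uses Lemma \ref{important lemma}(2) together with the refined cyclic order from Lemma \ref{topological} (which places $A\cdot b^+$ between $a^+$ and $B\cdot a^+$) to conclude that $\rho(A)\cdot q$ lies beyond the point associated to $A\cdot b^+$, hence beyond the one associated to $B\cdot a^+$. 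This is why the paper compares with $P_{A\cdot b^+}$ rather than with $P_{B\cdot a^+}$, and then invokes Lemma \ref{main computation} with $x_2=A\cdot b^+$; your shortcut via $P_{B\cdot a^+}$ is fine once this positional fact is proved, but as written it is asserted, not proved.

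For (2), the bookkeeping is also not quite right as described. The telescoping should run over the translates $\{b_i^+,b_i^-\}$ of $\{b^+,b^-\}$ lying between $\{b^{\pm}\}$ and $\{A\cdot b^{\pm}\}$, i.e.\ over the points $p_i=\Pbbb(\xi(b_i^-)^{(k)}+\xi(b_i^+)^{(n-k-1)})\cap\omega$ on the fixed line $\Pbbb(\pi)$, with the closing identity $p_{m+1}=\rho(A)\cdot p_1$ (equivariance again) converting the total cross ratio into $\alpha_n/\alpha_1$ via Lemma \ref{cross ratio and length} and Lemma \ref{basic cross ratio}(4). Interpolating with hyperplanes of the form $P_{B_{i_1}\cdots B_{i_j}\cdot a^+}$ does not do this: the factor attached to the $i$-th crossing must be a cross ratio of hyperplanes through the $(n-2)$-plane $M_i=\xi(b_i^-)^{(k)}+\xi(b_i^+)^{(n-k-2)}$, which changes from crossing to crossing, so the factors must be compared as cross ratios of collinear points on $\Pbbb(\pi)$ (as in the paper), and your proposed interpolating family neither produces quantities to which the Lemma \ref{main computation} bound applies nor telescopes to the $\rho(A)$-cross ratio. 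The identification of the $B^{-1}$-crossing factor as $\beta_{n-k}/(\beta_{n-k}-\beta_{n-k-1})$ and the $k$-independence of $u$ are correct.
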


\begin{proof}
Proof of (1). Let $\omega$ be the subsegment of $\Pbbb(\xi(a^+)^{(1)}+\xi(a^-)^{(1)})$ with endpoints $\xi(a^+)^{(1)}$, $\xi(a^-)^{(1)}$ that has non-empty intersection with $\Pbbb(\xi(b^-)^{(k)}+\xi(b^+)^{(n-k-1)})$. Define
\begin{eqnarray*}
p&:=&\Pbbb(\xi(b^-)^{(k)}+\xi(b^+)^{(n-k-1)})\cap\omega,\\
q&:=&\Pbbb(\xi(b^-)^{(k)}+\xi(b^+)^{(n-k-2)}+\xi(A\cdot b^+)^{(1)})\cap\omega,
\end{eqnarray*}
and note that $q$ exists by (1) of Lemma \ref{important lemma}. Also, observe that
\[\rho(A)\cdot p=\Pbbb(\xi(A\cdot b^-)^{(k)}+\xi(A\cdot b^+)^{(n-k-1)})\cap\omega,\]
so (2) of Lemma \ref{important lemma} implies that $\rho(A)\cdot p$ lies between $\xi(a^+)^{(1)}$ and $q$ in $\omega$. Lemma \ref{basic cross ratio}, Lemma \ref{cross ratio and length} and Proposition \ref{useful cross ratio inequalities} together then allow us to conclude that
\begin{align*}
\frac{\alpha_n}{\alpha_1}&=(\xi(a^+)^{(1)},\rho(A)\cdot p,p,\xi(a^-)^{(1)})\\
&> (\xi(a^+)^{(1)},q,p,\xi(a^-)^{(1)})\\
&=(P_{a^+},P_{A\cdot b^+},P_{b^+},P_{a^-})
\end{align*}
where 
\[P_x=P^{(k)}_x:=\left\{\begin{array}{lll}
\xi(x)^{(1)}+\xi(b^-)^{(k)}+\xi(b^+)^{(n-k-2)}&\text{if}&x\neq b^-,b^+\\
\xi(b^-)^{(k+1)}+\xi(b^+)^{(n-k-2)}&\text{if}&x=b^-\\
\xi(b^-)^{(k)}+\xi(b^+)^{(n-k-1)}&\text{if}&x=b^+.\\
\end{array}\right. \]
By Lemma \ref{topological}, we know that $a^+$, $A\cdot b^+$, $B\cdot a^+$, $b^+$, $a^-$, $b^-$ lie along $\partial_\infty\Gamma$ in that cyclic order. This allows us to apply Lemma \ref{main computation} with $x_1$, $x_2$, $x_3$ as $a^+$, $A\cdot b^+$, $a^-$ respectively to obtain the desired inequality.

Proof of (2). Let $r^-$, $r^+$ be the closed subsegments of $\partial_\infty\Gamma$ with endpoints $a^-$ and $a^+$, so that $b^-$ and $A\cdot b^-$ lie in $r^-$, while $b^+$ and $A\cdot b^+$ lie in $r^+$. Orient both $r^-$ and $r^+$ from $a^-$ to $a^+$. Define $\Bmc$ to be the set of unordered pairs $\{b'^+,b'^-\}$ in the $\Gamma$-orbit of $\{b^+,b^-\}$ so that $b'^+$ lies in $r^+$ between $b^+$ and $A\cdot b^+$, while $b'^-$ lies in $r^-$ between $b^-$ and $A\cdot b^-$.

Every pair in $\Bmc$ is the set of attracting and repelling fixed points for some $B'$ in $\Gamma$ that is conjugate to $B$. Since $\eta$ is simple, we know that for every $\{b'^+,b'^-\}$ and $\{b''^+,b''^-\}$ in $\Bmc$, $b'^+$ precedes $b''^+$ (in the orientation on $r^+$) if and only if $b'^-$ precedes $b''^-$ (in the orientation of $r^-$). The orientation on $r^-$ and $r^+$ thus induce an ordering on $\Bmc$. Also, observe that $|\Bmc|=i(\eta,\gamma)+1$, so we can label the pairs in $\Bmc$ according to the order, i.e.
\[\Bmc=\bigg\{\{b_1^+,b_1^-\},\dots,\{b_{m+1}^+,b_{m+1}^-\}\bigg\},\]
where $b_1^+=b^+$, $b_1^-=b^-$, $b_{m+1}^+=A\cdot b^+$, $b_{m+1}^-=A\cdot b^-$, and $m=i(\eta,\gamma)$. 

For each $i$, let $B_i$ be the element in $\Gamma$ that is conjugate to either $B$ or $B^{-1}$ so that its attracting and repelling fixed points are $b_i^+$ and $b_i^-$ respectively. By Lemma \ref{topological}, $a^+$, $b_{i+1}^+$, $B_i\cdot a^+$, $b_i^+$, $a^-$, $b_i^-$ lie along $\partial_\infty\Gamma$ in that cyclic order, so we can apply Proposition \ref{main computation} with $x_1$, $x_2$, $x_3$ as $a^+$, $b_{i+1}^+$, $a^-$ respectively to conclude that 
\begin{equation}\label{B}
(P_{a^+,i},P_{b_{i+1}^+,i},P_{b_i^+,i},P_{a^-,i})>\frac{\beta_{k+2}}{\beta_{k+2}-\beta_{k+1}}\end{equation}
if $B_i$ is conjugate to $B$, and 
\begin{equation}\label{B inverse}
(P_{a^+,i},P_{b_{i+1}^+,i},P_{b_i^+,i},P_{a^-,i})>\frac{\beta_{n-k}}{\beta_{n-k}-\beta_{n-k-1}}\end{equation}
if $B_i$ is conjugate to $B^{-1}$, where
\[P_{x,i}=P^{(k)}_{x_i}:=\left\{\begin{array}{lll}
\xi(x)^{(1)}+\xi(b_i^-)^{(k)}+\xi(b_i^+)^{(n-k-2)}&\text{if}&x\neq b_i^-,b_i^+\\
\xi(b_i^-)^{(k+1)}+\xi(b_i^+)^{(n-k-2)}&\text{if}&x=b_i^-\\
\xi(b_i^-)^{(k)}+\xi(b_i^+)^{(n-k-1)}&\text{if}&x=b_i^+.
\end{array}\right. \]

Fix any $k=0,\dots,n-2$, and let $\omega$ be the subsegment of $\Pbbb(\xi(a^+)^{(1)}+\xi(a^-)^{(1)})$ with endpoints $\xi(a^+)^{(1)}$, $\xi(a^-)^{(1)}$ that has non-empty intersection with $\Pbbb(\xi(b_1^-)^{(k)}+\xi(b_1^+)^{(n-k-1)})$. For $i=1,\dots,m+1$, define
\[p_i:=\Pbbb(\xi(b_i^-)^{(k)}+\xi(b_i^+)^{(n-k-1)})\cap\omega,\]
and for $i=2,\dots,m+1$, define
\[q_i:=\Pbbb(\xi(b_{i-1}^-)^{(k)}+\xi(b_{i-1}^+)^{(n-k-2)}+\xi(b_i^+)^{(1)})\cap\omega.\]

Observe that (2) of Lemma \ref{important lemma} implies that $p_i$ and $q_i$ are well-defined, and that
$\xi(a^-)^{(1)},p_1,q_1,p_2,q_2,\dots,p_m,q_m,p_{m+1},\xi(a^+)^{(1)}$ lie in $\omega$ in that order. Hence, by similar arguments as those used in the proof of (1), we have
\begin{eqnarray*}
(\xi(a^+)^{(1)},p_{i+1},p_i,\xi(a^-)^{(1)})&>& (\xi(a^+)^{(1)},q_{i+1},p_i,\xi(a^-)^{(1)})\\
&=&(P_{a^+,i},P_{b_{i+1}^+,i},P_{b_i^+,i},P_{a^-,i})
\end{eqnarray*}
We can then use Lemma \ref{cross ratio and length} and Lemma \ref{basic cross ratio} to obtain
\begin{eqnarray}\label{multi intersection}
\frac{\alpha_n}{\alpha_1}&=&(\xi(a^+)^{(1)},p_{m+1},p_1,\xi(a^-)^{(1)})\nonumber\\
&=&\prod_{i=1}^m(\xi(a^+)^{(1)},p_{i+1},p_i,\xi(a^-)^{(1)})\\
&>&\prod_{i=1}^m(P_{a^+,i},P_{b_{i+1}^+,i},P_{b_i^+,i},P_{a^-,i}).\nonumber
\end{eqnarray}

Let $\Bmc_+:=\{i:B_i\text{ is conjugate to }B\}$, $\Bmc_-:=\{i:B_i\text{ is conjugate to }B^{-1}\}$ and let $u:=|\Bmc_+|$. Then combining the inequalities (\ref{B}), (\ref{B inverse}) and (\ref{multi intersection}) yields
\begin{eqnarray*}
\frac{\alpha_n}{\alpha_1}&>&\prod_{i\in\Bmc_+}(P_{a^+,i},P_{b_{i+1}^+,i},P_{b_i^+,i},P_{a^-,i})\cdot \prod_{i\in\Bmc_-}(P_{a^+,i},P_{b_{i+1}^+,i},P_{b_i^+,i},P_{a^-,i})\\
&>&\bigg(\frac{\beta_{k+2}}{\beta_{k+2}-\beta_{k+1}}\bigg)^u\cdot\bigg(\frac{\beta_{n-k}}{\beta_{n-k}-\beta_{n-k-1}}\bigg)^{i(\eta,\gamma)-u}.
\end{eqnarray*}
\end{proof}

With Proposition \ref{main proposition}, we can now prove Theorem \ref{main theorem}.

\begin{proof}[Proof of Theorem \ref{main theorem}]
In this proof, we will use the same notation as we used in Proposition \ref{main proposition}.

Proof of (1). Choose orientations on $\gamma$ and $\eta$. The hypothesis on $\gamma$ and $\eta$ imply that there are group elements $A$, $B$ in $\Gamma$ corresponding to $\eta$, $\gamma$ respectively, so that 
\[a^+,\ b^+,\ a^-,\ b^-\] 
lie along $\partial_\infty\Gamma$ in that cyclic order. Let $0<\alpha_1<\dots<\alpha_n$ and $0<\beta_1<\dots<\beta_n$ be the eigenvalues of $\rho(A)$ and $\rho(B)$ respectively. By (1) of Proposition \ref{main proposition}, we know that for all $k=0,\dots,n-2$, 
\[\frac{\alpha_n}{\alpha_1}>\frac{\beta_{k+2}}{\beta_{k+2}-\beta_{k+1}},\]
which implies that 
\[\frac{\beta_{k+1}}{\beta_{k+2}}< 1-\frac{\alpha_1}{\alpha_n}.\]
Taking the product over all $k=0,\dots,n-2$, we get
\[\frac{\alpha_1}{\alpha_n}+\bigg(\frac{\beta_1}{\beta_n}\bigg)^{\frac{1}{n-1}}< 1.\]
Since $l_\rho(\eta)=\log(\frac{\alpha_n}{\alpha_1})$ and $l_\rho(\gamma)=\log(\frac{\beta_n}{\beta_1})$, the above inequality gives us (1).

Proof of (2). 
By Proposition 2.12, we know that there is some non-negative integer $u\leq i(\eta,\gamma)$ so that for any $k=0,\dots,n-2$, we have
\[\frac{\alpha_1}{\alpha_n} < \Big(1-\frac{\beta_{k+1}}{\beta_{k+2}}\Big)^u \Big(1-\frac{\beta_{n-k-1}}{\beta_{n-k}}\Big)^{i(\eta,\gamma)-u}.\]
In particular, we also have that for any $k=0,\dots,n-2$, 
\[\frac{\alpha_1}{\alpha_n} < \Big(1-\frac{\beta_{k+1}}{\beta_{k+2}}\Big)^{i(\eta,\gamma)-u} \Big(1-\frac{\beta_{n-k-1}}{\beta_{n-k}}\Big)^u,\]
so we can assume that
\[ \frac{\alpha_1}{\alpha_n} < \Big(1-\frac{\beta_{k+1}}{\beta_{k+2}}\Big)^u \Big(1-{\frac{\beta_{n-k-1}}{\beta_{n-k}}}\Big)^{v}. \]
for some non-negative integers $u,v$ so that $u \geq v$ and $u+v=i(\eta,\gamma)$. This implies that
\[\frac{\alpha_1}{\alpha_n} 
< \Big(1-{\frac{\beta_{k+1}}{\beta_{k+2}}}\Big)^u \Big(1-{\frac{\beta_{1}}{\beta_{n}}}\Big)^{v},\]
which we can rewrite as
\[\frac{\beta_{k+1}}{\beta_{k+2}} < 1 - \frac{ (\frac{\alpha_{1}}{\alpha_{n}})^{\frac{1}{u}} }{ ( 1 - \frac{\beta_1}{\beta_n} )^{\frac{v}{u}}}.\]
By taking the product of the above inequality over $k=0,\dots,n-2$, we have
\[{\Big( \frac{\beta_{1}}{\beta_{n}} \Big)^{\frac{1}{n-1}}} < 1 - \frac{ (\frac{\alpha_{1}}{\alpha_{n}})^{\frac{1}{u}} }{ ( 1 - \frac{\beta_1}{\beta_n} )^{\frac{v}{u}} },\]
which can be rewritten as
\[ \frac{\alpha_{1}}{\alpha_{n}} <  \Big( 1 - {\Big(\frac{\beta_1}{\beta_n}\Big)^{\frac{1}{n-1}}} \Big)^{u}  \Big( 1 - \frac{\beta_1}{\beta_n} \Big)^{v},\]
from which (2) follows.

Proof of (3). Choose an orientation on $\eta$. Since $\eta$ is non-simple, there are group elements $A, B$ corresponding to $\eta$ so that 
\[a^+,\ b^+,\ a^-,\ b^-\] 
lie along $\partial_\infty\Gamma$ in that cyclic order. Let $0<\alpha_1<\dots<\alpha_n$ and $0<\beta_1<\dots<\beta_n$ be the eigenvalues of $\rho(A)$ and $\rho(B)$ respectively. Note that $\rho(B)$ is either conjugate to $\rho(A)$ or $\rho(A)^{-1}$, so $\frac{\beta_n}{\beta_1}=\frac{\alpha_n}{\alpha_1}$. Hence, the same computation as the proof of (1) then yields the inequality
\[\bigg(\frac{\alpha_1}{\alpha_n}\bigg)^{\frac{1}{n-1}}+\frac{\alpha_1}{\alpha_n}< 1,\]
which is equivalent to
\begin{equation}\label{inequality}
\bigg(1-\frac{\alpha_1}{\alpha_n}\bigg)^{n-1}-\frac{\alpha_1}{\alpha_n}> 0.
\end{equation}

Consider the polynomial $P_n(x)=x^{n-1}+x-1$. Note that for $n\geq 2$, $P_n(x)$ is strictly increasing on the interval $[0,1]$, $P_n(0)=-1$ and $P_n(1)=1$. Hence, $P_n$ has a unique zero in the interval $(0,1)$, which we denote by $x_n$. It then follows that 
\[\{x\in[0,1]:P_n(x)>0\}=(x_n,1].\]

Also, observe that 
\[P_n\bigg(1-\frac{\alpha_1}{\alpha_n}\bigg)=\bigg(1-\frac{\alpha_1}{\alpha_n}\bigg)^{n-1}-\frac{\alpha_1}{\alpha_n}\]
and $0<1-\frac{\alpha_1}{\alpha_n}< 1$.
Since $\frac{\alpha_1}{\alpha_n}$ satisfies the inequality (\ref{inequality}), we have 
\[x_n< 1-\frac{\alpha_1}{\alpha_n}< 1.\]
This implies that 
\[l_\rho(\eta)=\log\bigg(\frac{\alpha_n}{\alpha_1}\bigg)>\delta_n:=-\log(1-x_n).\] 
\end{proof}

\section{Further remarks.}\label{Further remarks}

In this section, we prove some corollaries to Theorem \ref{main theorem}, show that it does not hold for quasi-Fuchsian representations, and perform a comparison with the classical collar lemma.

\subsection{Corollaries.}\label{corollaries}
In this subsection, we will mention some consequences to Theorem \ref{main theorem}. The first is a universal collar lemma that holds simultaneously for all Hitchin representations.

\begin{cor}\label{main corollary}
Let $S$ be a surface of genus $g\geq 2$, and let $\gamma$, $\eta$ be two non-contractible closed curves in $S$. Then for any $n\geq 2$ and any $\rho\in Hit_n(S)$, the following hold:
\begin{enumerate}
\item If $i(\eta,\gamma)\neq 0$, then
\[(\exp(l_\rho(\gamma))-1)(\exp(l_\rho(\eta))-1)> 1.\]
\item If $i(\eta,\gamma)\neq 0$ and $\gamma$ is simple, then  
\[(\exp(l_\rho(\gamma))-1)\bigg(\exp\bigg(\frac{l_\rho(\eta)}{ i(\eta,\gamma)}\bigg)-1\bigg)> 1.\]
\item If $\eta$ is a non-simple closed curve, then
\[l_\rho(\eta)>\log(2).\]
\end{enumerate}
\end{cor}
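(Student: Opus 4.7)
The plan is to derive each part of Corollary \ref{main corollary} directly from the corresponding part of Theorem \ref{main theorem}, using only elementary monotonicity to remove the $n$-dependence. The key observation throughout is that $e^{-x/(n-1)} \geq e^{-x}$ for $x \geq 0$ and $n \geq 2$, which lets us weaken the Hitchin inequalities to $n$-free inequalities that match the Fuchsian ($n=2$) case.

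For part (1), I would write $x = l_\rho(\gamma)$, $y = l_\rho(\eta)$ and rearrange Theorem \ref{main theorem}(1), $e^{-y} < 1 - e^{-x/(n-1)}$, into the form $(e^{x/(n-1)}-1)(e^y-1) > 1$. Multiplying by $(e^x-1)/(e^{x/(n-1)}-1) \geq 1$ (which holds since $x > 0$ and $n \geq 2$) yields $(e^x - 1)(e^y - 1) > 1$. For part (2), Theorem \ref{main theorem}(2) gives non-negative integers $u, v$ with $u + v = i(\eta,\gamma)$ and
\[ e^{-l_\rho(\eta)} < \bigl(1 - e^{-l_\rho(\gamma)/(n-1)}\bigr)^u \bigl(1 - e^{-l_\rho(\gamma)}\bigr)^v. \]
Since $1 - e^{-l_\rho(\gamma)/(n-1)} \leq 1 - e^{-l_\rho(\gamma)}$, the right-hand side is at most $(1-e^{-l_\rho(\gamma)})^{i(\eta,\gamma)}$. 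Taking $i(\eta,\gamma)$-th roots and rearranging exactly as in part (1) gives the desired inequality $(e^{l_\rho(\gamma)}-1)(e^{l_\rho(\eta)/i(\eta,\gamma)}-1) > 1$.

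For part (3), Theorem \ref{main theorem}(3) yields $l_\rho(\eta) > \delta_n$, where $\delta_n$ is the unique positive root of $e^{-x} + e^{-x/(n-1)} = 1$, so it suffices to verify $\delta_n \geq \log 2$ for every $n \geq 2$. When $n = 2$ the defining equation collapses to $2e^{-x} = 1$, giving $\delta_2 = \log 2$. For $n \geq 3$, the function $f_n(x) := e^{-x} + e^{-x/(n-1)}$ is strictly decreasing, and $f_n(\log 2) = \tfrac{1}{2} + 2^{-1/(n-1)} \geq \tfrac{1}{2} + 2^{-1/2} > 1$, so monotonicity forces $\delta_n > \log 2$.

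None of the three reductions presents a genuine obstacle; the entire corollary amounts to algebraic simplification of Theorem \ref{main theorem}, and the only mild subtlety is keeping track of where strict versus non-strict inequalities are used when collapsing the factor $e^{x/(n-1)}$ to $e^x$ in parts (1) and (2).
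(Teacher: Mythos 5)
Your proposal is correct and follows essentially the same route as the paper: all three parts are deduced from Theorem \ref{main theorem} via the elementary bound $e^{-x/(n-1)}\geq e^{-x}$ (equivalently $1-e^{-x/(n-1)}\leq 1-e^{-x}$) together with simple rearrangement, and for (3) the observation that $\delta_2=\log 2$ and $\delta_n>\log 2$ for $n\geq 3$. The only cosmetic difference is that you rearrange before weakening in (1) and spell out the monotonicity check for $\delta_n$, whereas the paper weakens first and treats (2) and (3) as immediate.
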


\begin{proof}
Proof of (1). For all $n\geq 2$, (1) of Theorem \ref{main theorem} tells us that
\[\frac{1}{\exp(l_\rho(\eta))}< 1-\frac{1}{\exp(\frac{l_\rho(\gamma)}{n-1})}\leq 1-\frac{1}{\exp(l_\rho(\gamma))}.\]
By rearranging the terms in this inequality, we then have
\[(\exp(l_\rho(\eta))-1)(\exp(l_\rho(\gamma))-1)> 1.\]

The proof of (2) is very similar to the proof of (1), and (3) is obvious since $\{\delta_i\}_{i=2}^\infty$ is an increasing sequence and $\delta_2=\log(2)$.
\end{proof}

An easy corollary to Theorem \ref{main theorem} is a generalization of a direct consequence of the classical collar lemma in the case of $\Tmc(S)$.

\begin{cor}
For any $n\geq 2$ and any $\rho\in Hit_n(S)$, there are at most $3g-3$ closed curves in S of $\rho$-length at most $\delta_n$.
\end{cor}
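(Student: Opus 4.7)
The plan is to combine parts (1) and (3) of Theorem \ref{main theorem} with the classical fact that a closed orientable surface of genus $g\geq 2$ admits at most $3g-3$ pairwise disjoint, pairwise non-homotopic, essential simple closed curves.

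First I would observe that by (3) of Theorem \ref{main theorem}, every closed curve $\eta$ with $l_\rho(\eta)\leq\delta_n$ must be simple, since otherwise $l_\rho(\eta)>\delta_n$. So the collection of curves we need to bound consists of simple closed curves.

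Next I would show that any two closed curves $\gamma,\eta$ with $l_\rho(\gamma),l_\rho(\eta)\leq\delta_n$ must satisfy $i(\gamma,\eta)=0$. Indeed, suppose for contradiction that $i(\gamma,\eta)\neq 0$. Then (1) of Theorem \ref{main theorem} gives
\[\frac{1}{\exp(l_\rho(\eta))}<1-\frac{1}{\exp(l_\rho(\gamma)/(n-1))}.\]
From $l_\rho(\eta)\leq\delta_n$ we get $\exp(-l_\rho(\eta))\geq e^{-\delta_n}$, while from $l_\rho(\gamma)\leq\delta_n$ we get $\exp(-l_\rho(\gamma)/(n-1))\geq e^{-\delta_n/(n-1)}$, so
\[1-\frac{1}{\exp(l_\rho(\gamma)/(n-1))}\leq 1-e^{-\delta_n/(n-1)}=e^{-\delta_n},\]
where the last equality is the defining equation of $\delta_n$. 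Chaining these inequalities yields $e^{-\delta_n}<e^{-\delta_n}$, a contradiction. Hence any two such curves are disjoint (up to homotopy).

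Finally I would invoke the standard topological fact that on a closed orientable surface of genus $g\geq 2$ the maximum cardinality of a collection of pairwise disjoint, pairwise non-homotopic, essential simple closed curves is $3g-3$ (realized by any pants decomposition). Combining this with the previous two observations gives at most $3g-3$ homotopy classes of closed curves of $\rho$-length at most $\delta_n$.

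The only mild subtlety, and arguably the main point to verify carefully, is the sharp use of the definition $e^{-\delta_n}+e^{-\delta_n/(n-1)}=1$ to make the intersection argument close exactly; everything else is immediate from earlier results.
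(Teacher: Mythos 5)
Your proposal is correct and follows essentially the same route as the paper: part (3) of Theorem \ref{main theorem} forces every curve of $\rho$-length at most $\delta_n$ to be simple, part (1) (together with the defining equation $e^{-\delta_n}+e^{-\delta_n/(n-1)}=1$, which you verify explicitly where the paper leaves it implicit) forces any two such curves to have vanishing geometric intersection number, and the standard bound of $3g-3$ on pairwise disjoint, pairwise non-homotopic essential simple closed curves finishes the argument.
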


In the case of $\Tmc(S)$, one can replace the number $\delta_2=\log(2)$ with $4\cdot\sinh^{-1}(1)$ (see Theorem 4.2.2 of \cite{Bus1}).

\begin{proof}
By (1) of Theorem \ref{main theorem}, if $\eta$ and $\gamma$ are closed curves in $S$ such that $i(\eta,\gamma)\neq 0$, then $l_\rho(\eta)$ and $l_\rho(\gamma)$ cannot both be smaller than $\delta_n$. Moreover, (3) of Theorem \ref{main theorem} tells us that any closed curve of $\rho$-length less than $\delta_n$ has to be simple. Thus, the set of closed curves of $\rho$-length less than $\delta_n$ has to be a pairwise disjoint collection of simple closed curves, so the size of this collection is at most $3g-3$. 
\end{proof}

Let $\widetilde{M}$ be the $SL(n,\Rbbb)$ symmetric space, and let $d_{\widetilde{M}}$ be the distance function given by the Riemannian metric on $\widetilde{M}$. It is a well-known that for any $Z\in\Gamma$, the translation length of $\rho(Z)$, $\inf\{d_{\widetilde{M}}(o,\rho(Z)\cdot o):o\in \widetilde{M}\}$, is
\[c_n\sqrt{\sum_{i=1}^n(\log\lambda_i)^2},\]
for some positive constant $c_n$ depending only on $n$. Here, $0<\lambda_1<\dots<\lambda_n$ are the eigenvalues of $\rho(Z)$. (See II.10 of Bridson-Haefliger \cite{BriHae1}.) For our purposes, we normalize the metric on $\widetilde{M}$ so that $c_n=\sqrt{2}$, i.e. so that the image of the totally geodesic embedding of $\Hbbb^2$ in $\widetilde{M}$ induced by $\iota_n: PSL(2,\Rbbb)\to PSL(n,\Rbbb)$ has sectional curvature $-\frac{6}{n(n-1)(n+1)}$. Then for any discrete, faithful representation $\rho:\Gamma\to PSL(n,\Rbbb)$, and for any rectifiable closed curve $\omega$ in $M:=\rho(\Gamma)\backslash\widetilde{M}$, let $l_M(\omega)$ be the length of $\omega$ in the Riemannian metric on $M$. 

In the case when $\rho\in Hit_n(S)$, we can use Theorem \ref{main theorem}, to obtain a relationship between the lengths of curves in the quotient locally symmetric space $M$.

\begin{cor}\label{symmetric space}
Let $\gamma$, $\eta$ be two non-contractible closed curves in $S$ and let $X$, $Y$ be elements in $\Gamma$ corresponding to $\gamma$, $\eta$ respectively. For any $\rho\in Hit_n(S)$, let $\gamma'$, $\eta'$ be two closed curves in $\rho(\Gamma)\backslash\widetilde{M}=:M$ that correspond to $X,Y\in\Gamma=\pi_1(M)$ respectively. Then the statements in Theorem \ref{main theorem} and Corollary \ref{main corollary intro} hold, with $l_\rho(\gamma)$ and $l_\rho(\eta)$ replaced with $l_M(\gamma')$ and $l_M(\eta')$ respectively.
\end{cor}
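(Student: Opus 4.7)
The plan is to reduce the corollary to Theorem \ref{main theorem} and Corollary \ref{main corollary intro} by establishing a single monotonicity inequality
\[ l_M(\omega) \geq l_\rho(\zeta) \]
valid for every closed curve $\omega$ in $M$ lying in the free homotopy class that corresponds (via $\pi_1$) to the conjugacy class in $\Gamma$ determined by a closed curve $\zeta$ in $S$. Once this is known, each assertion in Theorem \ref{main theorem} and Corollary \ref{main corollary intro} transfers directly, because all of those inequalities have the form of a strictly decreasing function of $l_\rho(\eta)$ being strictly dominated by a non-decreasing function of $l_\rho(\gamma)$ (or, in part (3), of a single $l_\rho$ exceeding an explicit constant). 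Replacing $l_\rho(\eta)$ by the larger $l_M(\eta')$ decreases the left-hand side, while replacing $l_\rho(\gamma)$ by the larger $l_M(\gamma')$ increases the right-hand side, so the strict inequalities persist.

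To prove $l_M(\omega) \geq l_\rho(\zeta)$, I would first lift $\omega$ to a path in $\widetilde{M}$ from a basepoint $o$ to $\rho(Z)\cdot o$, where $Z \in \Gamma$ is the group element associated with $\omega$; the length of this lifted path equals $l_M(\omega)$ and is bounded below by the translation length $\inf_{o\in\widetilde{M}} d_{\widetilde{M}}(o,\rho(Z)\cdot o)$. By the stated normalization of the Riemannian metric on $\widetilde{M}$, this translation length equals $\sqrt{2\sum_{i=1}^{n}(\log|\lambda_i|)^2}$, where $\lambda_1,\dots,\lambda_n$ are the eigenvalues of $\rho(Z)$ ordered so that $|\lambda_1|<\dots<|\lambda_n|$.

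The remaining step is the elementary numerical estimate
\[ 2\sum_{i=1}^{n}(\log|\lambda_i|)^2 \;\geq\; 2(\log|\lambda_1|)^2 + 2(\log|\lambda_n|)^2 \;\geq\; \bigl(\log|\lambda_n|-\log|\lambda_1|\bigr)^2 = l_\rho(\zeta)^2, \]
where the second inequality is the rearrangement $2(a^2+b^2)\geq (b-a)^2$, i.e.\ $(a+b)^2\geq 0$, applied to $a=\log|\lambda_1|$, $b=\log|\lambda_n|$. Taking square roots yields translation length $\geq l_\rho(\zeta)$, and combining with the previous paragraph gives the desired $l_M(\omega) \geq l_\rho(\zeta)$.

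There is no genuine obstacle in this argument: the work is essentially a one-line convexity inequality combined with the standard identification of translation length in $\widetilde{M}$ with a sum-of-squares expression in the logarithms of eigenvalues. The only point to be careful about is checking that every inequality appearing in Theorem \ref{main theorem} and Corollary \ref{main corollary intro} is monotone in $l_\rho(\gamma)$ and $l_\rho(\eta)$ in the direction that is compatible with replacing these quantities by the (possibly larger) $l_M(\gamma')$ and $l_M(\eta')$, which is a routine case-by-case verification for each of the six inequalities involved.
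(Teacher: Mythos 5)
Your proposal is correct and follows essentially the same route as the paper: bound $l_M(\omega')$ below by the translation length of $\rho(Z)$, identify that translation length with $\sqrt{2\sum_i(\log\lambda_i)^2}$ via the chosen normalization, and apply the same elementary inequality $2\sum_i x_i^2\geq (x_n-x_1)^2$ (the paper writes it as the identity $2\sum_i x_i^2-(x_n-x_1)^2=(x_1+x_n)^2+2(x_2^2+\dots+x_{n-1}^2)\geq 0$) to get $l_M(\omega')\geq l_\rho(\omega)$, after which the monotone transfer of the inequalities is routine.
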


\begin{proof}
Pick any $Z\in\Gamma\setminus\{\id\}$, and let $\omega$ in $S$ and $\omega'$ in $M$ be closed curves corresponding to $Z$. Observe then that the translation length of $\rho(Z)$ in $\widetilde{M}$ is a lower bound for $l_M(\omega')$. 

Also, since 
\[ 2 \sum_{i=1}^n x_i^2 - (x_n - x_1)^2 = (x_1 + x_n)^2 + 2(x_2^2 + \dotsm + x_{n-1}^2) \geq 0,\]
we have that
\[ (x_n - x_1)^2 \leq 2 \sum_{i=1}^n x_i^2.\]
This allows us to compute
\[l_\rho(\omega) = \log\Big(\frac{\lambda_n}{\lambda_1}\Big) \leq \sqrt{2 \sum_{i=1}^n(\log\lambda_i)^2} \leq l_M(\omega'),\]
where $0<\lambda_1<\dots<\lambda_n$ are the eigenvalues of $\rho(Z)$.
\end{proof}

Note that in Corollary \ref{symmetric space}, the closed curves $\gamma'$ and $\eta'$ in the locally symmetric space do not necessarily intersect, even when $i(\gamma,\eta)\neq 0$. In particular, Corollary \ref{symmetric space} is not simply a quantitative version of the Margulis lemma on $PSL(n,\Rbbb)$. However, it does imply a quantitative version of the Margulis lemma in the following way. Let $f:S\to M:=\rho(\Gamma)\backslash\widetilde{M}$ be a $\pi_1$-injective map such that $f(\gamma)$ and $f(\eta)$ are rectifiable curves in the Riemannian metric on $M$. Then the statements in Theorem \ref{main theorem} and Corollary \ref{main corollary intro} hold, with $l_\rho(\gamma)$ and $l_\rho(\eta)$ replaced with $l_M(f(\gamma))$ and $l_M(f(\eta))$ respectively. In particular, we have a collar lemma for the image of the harmonic maps corresponding to Hitchin representations that were given by Corlette \cite{Cor1}, Labourie \cite{Lab2} or Eells-Sampson \cite{EelSam1}.

Theorem \ref{main theorem} and Corollary \ref{main corollary intro} also allow us to deduce consequence that are similar to Corollary \ref{symmetric space}, but with the Hilbert metric on the symmetric space instead of the Riemannian one. The symmetric space $\widetilde{M}$ can be given a Hilbert metric in the following way. Let $S(n,\Rbbb)$ be the space of symmetric $n$ by $n$ matrices with real entries and let $P(n,\Rbbb)$ be the set of positive-definite matrices in $S(n,\Rbbb)$. Let $\Pbbb(P)$ and $\Pbbb(S)$ be the projectivizations of $P(n,\Rbbb)$ and $S(n,\Rbbb)$ respectively, and observe that $\Pbbb(P)$ is a properly convex domain in $\Pbbb(S)\simeq\Rbbb\Pbbb^{N-1}$, where $N=\frac{n(n+1)}{2}$. This allows us to equip $\Pbbb(P)$ with a Hilbert metric.

Moreover, we can define a $PSL(n,\Rbbb)$-action on $\Pbbb(S)$ by $g\cdot A := gAg^T$ for any $g\in PSL(n,\Rbbb)$ and any $A\in\Pbbb(S)$. Note that this action preserves the projective structure on $\Pbbb(S)$, and also preserves $\Pbbb(P)$. In fact, $PSL(n,\Rbbb)$ acts transitively on $\Pbbb(P)$, and the stabilizer of the projective class of the identity matrix in $\Pbbb(P)$ is $PSO(n)$, so the symmetric space $\widetilde{M}$ can be identified with $\Pbbb(P)$. This equips $\widetilde{M}$ with a Hilbert metric.  Denote $\widetilde{M}$ equipped with the Hilbert metric by $\widetilde{M}'$, and for any discrete, faithful representation $\rho:\Gamma\to PSL(n,\Rbbb)$, let $l_{M'}$ be the length function on $M':=\rho(\Gamma)\backslash\widetilde{M}'$ induced by the Hilbert metric. Theorem \ref{main theorem} and Corollary \ref{main corollary intro} then also implies the following corollary.

\begin{cor}
Let $\gamma$, $\eta$ be two non-contractible closed curves in $S$ and let $X$, $Y$ be elements in $\Gamma$ corresponding to $\gamma$, $\eta$ respectively. For any $\rho\in Hit_n(S)$, let $\gamma'$, $\eta'$ be two closed curves in $M'$ that correspond to $X,Y\in\Gamma=\pi_1(M')$ respectively. Then the statements in Theorem \ref{main theorem} and Corollary \ref{main corollary intro} hold, with $l_\rho(\gamma)$ and $l_\rho(\eta)$ replaced with $\frac{1}{2}l_{M'}(\gamma')$ and $\frac{1}{2}l_{M'}(\eta')$ respectively.
\end{cor}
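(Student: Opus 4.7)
The plan mirrors the proof of Corollary \ref{symmetric space}: the statement reduces, via the same substitution argument, to showing that
\[\tfrac{1}{2}\,l_{M'}(\omega') \;\ge\; l_\rho(\omega)\]
for every non-identity $Z\in\Gamma$ and every closed curve $\omega'$ in $M'$ corresponding to $Z$. Indeed every left-hand side in Theorem \ref{main theorem} and Corollary \ref{main corollary intro} is strictly decreasing in $l_\rho(\eta)$ and every right-hand side is weakly increasing in $l_\rho(\gamma)$, so replacing $l_\rho$ by the larger quantity $\tfrac12 l_{M'}$ only widens each gap; part (3) follows because $l_\rho(\eta)>\delta_n$ and $\tfrac12 l_{M'}(\eta')\ge l_\rho(\eta)$ together give $\tfrac12 l_{M'}(\eta')>\delta_n$.

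Since $l_{M'}(\omega')$ is bounded below by the translation length $\inf_{o\in\widetilde{M}'}d_{\widetilde{M}'}(o,\rho(Z)\cdot o)$, it suffices to show that $d_{\widetilde{M}'}([A],\rho(Z)\cdot[A])\ge 2\,l_\rho(Z)$ for every positive definite symmetric $A$. The key ingredient is the explicit Hilbert distance formula (in the normalization $d_H=\log[\,\cdot\,]$ that makes the factor $\tfrac12$ in the corollary come out right)
\[d_{\widetilde{M}'}([A],[B]) \;=\; \log\frac{\lambda_{\max}(BA^{-1})}{\lambda_{\min}(BA^{-1})}\]
for positive definite symmetric $A,B$. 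I would derive this by applying the Hilbert isometry $[X]\mapsto[A^{-1/2}XA^{-1/2}]$ to reduce to $A=\id$, diagonalizing $B$ by an orthogonal conjugation (which fixes $[\id]$), and locating the two boundary points of $\Pbbb(P)$ on the projective line from $[\id]$ to $[B]$ directly from the eigenvalues of $B$ by a cross-ratio calculation in the same spirit as those in Section \ref{n=3 case}.

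Applying the formula with $B=gAg^T$, where $g=\rho(Z)$ has eigenvalues $0<\mu_1<\dots<\mu_n$, set $h=A^{1/2}$ and $N=h^{-1}gh$. A direct computation gives $gAg^TA^{-1}=h(NN^T)h^{-1}$, so $gAg^TA^{-1}$ is similar to $NN^T$ and its eigenvalues are the squares of the singular values of $N$. Since $N$ is similar to $g$, the eigenvalues of $N$ are $\mu_1,\dots,\mu_n$, and the standard inequalities $\sigma_{\min}(N)\le|\lambda_i(N)|\le\sigma_{\max}(N)$ force $\sigma_{\max}(N)\ge\mu_n$ and $\sigma_{\min}(N)\le\mu_1$. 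Therefore
\[d_{\widetilde{M}'}([A],g\cdot[A]) \;=\; \log\frac{\sigma_{\max}(N)^2}{\sigma_{\min}(N)^2} \;\ge\; \log\frac{\mu_n^2}{\mu_1^2} \;=\; 2\,l_\rho(Z),\]
as required. The main obstacle I anticipate is pinning down the Hilbert distance formula above with the correct normalization; once that is in hand, the singular-value-versus-eigenvalue inequality plays exactly the role that the elementary estimate $(x_n-x_1)^2\le 2\sum_i x_i^2$ played in the Riemannian case of Corollary \ref{symmetric space}.
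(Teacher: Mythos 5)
Your proposal is correct, and it reaches the key estimate $\tfrac12 l_{M'}(\omega')\geq l_\rho(\omega)$ by a different mechanism than the paper. The paper assumes $\rho(Z)$ diagonal, observes that the induced linear action $A\mapsto \rho(Z)A\rho(Z)^T$ on $S(n,\Rbbb)\cong\Rbbb^N$ has the matrices $E_{ij}+E_{ji}$ as eigenvectors with eigenvalues $\lambda_i\lambda_j$, and then quotes Proposition 2.1 of \cite{CooLonTil1} to get that the Hilbert translation length of $\rho(Z)$ is exactly $\log(\lambda_n^2/\lambda_1^2)=2\,l_\rho(\omega)$; the concluding substitution/monotonicity step is the same as yours. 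You instead derive the explicit Hilbert distance formula on the projectivized cone of positive definite symmetric matrices and bound the displacement of \emph{every} point from below: with $N=A^{-1/2}\rho(Z)A^{1/2}$ you get $d_{\widetilde{M}'}([A],\rho(Z)\cdot[A])=\log\bigl(\sigma_{\max}(N)^2/\sigma_{\min}(N)^2\bigr)\geq 2\log(\mu_n/\mu_1)$ via the standard inequality $\sigma_{\min}(N)\leq|\lambda_i(N)|\leq\sigma_{\max}(N)$, and your similarity computation $\rho(Z)A\rho(Z)^TA^{-1}=A^{1/2}(NN^T)A^{-1/2}$ is correct. What this buys is a self-contained argument that does not invoke the translation-length formula of \cite{CooLonTil1} (only the classical distance formula for the symmetric cone, whose cross-ratio derivation you sketch soundly) and that runs exactly parallel to the proof of Corollary \ref{symmetric space}, with the singular-value inequality playing the role of the elementary estimate there; what it gives up is the exact value of the translation length, which the paper's eigenvalue computation yields but which is not needed, since a lower bound on every displacement already bounds $l_{M'}(\omega')$ from below. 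Your attention to normalization is also warranted and correctly resolved: the factor $\tfrac12$ in the statement is calibrated to the un-halved convention $d=\log(\text{cross ratio})$, which is the one you adopt and the one under which the paper's claimed translation length $2\log(\lambda_n/\lambda_1)$ holds.
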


\begin{proof}
For any $Z\in\Gamma\setminus\{\id\}$, let $0<\lambda_1<\dots<\lambda_n$ be the eigenvalues of $\rho(Z)$. We can assume without loss of generality that $\rho(Z)$ is a diagonal. Let $E_{ij}$ be the $(n,n)$--matrix with $1$ at position $(i,j)$ and zero everywhere else, and let $B_{ij} = E_{ij}+E_{ji}$. Obviously, $\{ B_{ij} \}_{i \leq j}$ is a basis of $S(n,\mathbb{R})=\mathbb{R}^{N}$, and it is easy to verify that $\rho(Z)\cdot B_{ij} = \lambda_{i}\lambda_{j} B_{ij}$. That means $B_{ij}$ is an eigenvector of $\rho(Z)$ with eigenvalue $\lambda_{i}\lambda_{j}$. Consequently, the translation length of $\rho(Z)$ is 
\[\log\bigg(\frac{\lambda_n^2}{\lambda_1^2}\bigg) = 2 \log\bigg(\frac{\lambda_n }{ \lambda_1}\bigg)\] 
(see Proposition 2.1 in \cite{CooLonTil1}). The corollary then follows easily.
\end{proof}

The image of the irreducible representation $\iota_n:PSL(2,\Rbbb)\to PSL(n,\Rbbb)$ lies in a conjugate of the subgroup $PSO(k,k+1)\subset PSL(2k+1,\Rbbb)$ when $n=2k+1$, and a conjugate of $PSp(2k,\Rbbb)\subset PSL(2k,\Rbbb)$ when $n=2k$. Hence, we can define Hitchin components of the character varieties
\[Hom(\Gamma,PSO(k,k+1))/PSO(k,k+1),\ Hom(\Gamma,PSp(2k,\Rbbb))/PSp(2k,\Rbbb)\] 
in the same way as we did for $PSL(n,\Rbbb)$. Denote these Hitchin components by $Hit_n(S)'$. Note that $Hit_n(S)'$ can be naturally identified with a subset of $Hit_n(S)$. In the case when $\rho\in Hit_n(S)$ happens to be an element of $Hit_n(S)'$, we can strengthen (2) of Proposition \ref{main proposition}, which we state as the following corollary.

\begin{cor}
Let $A$, $B$ be elements in $\Gamma$ so that $a^+$, $b^+$, $a^-$, $b^-$ lie in $\partial_\infty\Gamma$ in that cyclic order. Here, $a^+$, $b^+$ are the attracting fixed points and $a^-$, $b^-$ are the repelling fixed points for $A$, $B$ respectively. Let $\rho\in Hit_n(S)'$ and let $\alpha_1<\dots<\alpha_n$, $\beta_1<\dots<\beta_n$ be the eigenvalues of $\rho(A)$, $\rho(B)$ respectively. Finally, let $\eta$ and $\gamma$ be closed curves on $S$ corresponding to $A$ and $B$ respectively. If $\gamma$ is a simple closed curve in $S$, then for every $k=0,\dots,n-2$, 
\[\frac{\alpha_n}{\alpha_1}> \bigg(\frac{\beta_{k+2}}{\beta_{k+2}-\beta_{k+1}}\bigg)^{i(\eta,\gamma)}\]
\end{cor}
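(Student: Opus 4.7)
The plan is to apply (2) of Proposition \ref{main proposition} and exploit the fact that for $\rho \in Hit_n(S)'$, the spectrum of $\rho(B)$ is invariant under $\lambda \mapsto \lambda^{-1}$. This symmetry will force the two factors appearing in the bound of (2) of Proposition \ref{main proposition} to coincide, so the total exponent collapses from $u+(i(\eta,\gamma)-u)$ to $i(\eta,\gamma)$, independently of the decomposition of $i(\eta,\gamma)$ into $u$ and $i(\eta,\gamma)-u$.

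First, I would establish the spectral symmetry. When $n=2k+1$, some conjugate of $\rho$ takes values in $PSO(k,k+1)$, and when $n=2k$, some conjugate takes values in $PSp(2k,\Rbbb)$. In either case, $\rho(B)$ preserves a non-degenerate bilinear form on $\Rbbb^n$, and is therefore conjugate to $(\rho(B)^T)^{-1}$. Hence the spectrum of $\rho(B)$ equals the spectrum of $\rho(B)^{-1}$, i.e.\ it is invariant under $\lambda \mapsto \lambda^{-1}$. Combining this with the Labourie--Guichard fact (Theorem 1.5 of \cite{Lab1}) that the eigenvalues $0<\beta_1<\dots<\beta_n$ are positive and pairwise distinct, the unique order-reversing matching of eigenvalues with their inverses forces
\[\beta_i\,\beta_{n+1-i}\;=\;1 \qquad \text{for every } i\in\{1,\dots,n\}.\]

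Second, applying this identity with $i=k+1$ and $i=k+2$ gives $\beta_{n-k-1}/\beta_{n-k} = \beta_{k+1}/\beta_{k+2}$, and therefore
\[\frac{\beta_{n-k}}{\beta_{n-k}-\beta_{n-k-1}}
\;=\;\frac{1}{1-\beta_{n-k-1}/\beta_{n-k}}
\;=\;\frac{1}{1-\beta_{k+1}/\beta_{k+2}}
\;=\;\frac{\beta_{k+2}}{\beta_{k+2}-\beta_{k+1}}.\]
Feeding this equality into the bound of (2) of Proposition \ref{main proposition} collapses the product to
\[\frac{\alpha_n}{\alpha_1}
\;>\;\bigg(\frac{\beta_{k+2}}{\beta_{k+2}-\beta_{k+1}}\bigg)^{u}\cdot
\bigg(\frac{\beta_{k+2}}{\beta_{k+2}-\beta_{k+1}}\bigg)^{i(\eta,\gamma)-u}
\;=\;\bigg(\frac{\beta_{k+2}}{\beta_{k+2}-\beta_{k+1}}\bigg)^{i(\eta,\gamma)},\]
which is the desired inequality.

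The only real content is the spectral symmetry, so there is no significant obstacle; the existing Proposition \ref{main proposition}(2) carries all the cross-ratio geometry, and the corollary is merely the observation that for $\rho \in Hit_n(S)'$ the bilinear form cuts the two exponents in the mixed bound down to a single uniform one.
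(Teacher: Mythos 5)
Your proposal is correct and follows essentially the same route as the paper: the paper's proof is exactly the observation that membership in $PSO(k,k+1)$ or $PSp(2k,\Rbbb)$ forces $\beta_{k+1}=1/\beta_{n-k}$, which makes the two factors in (2) of Proposition \ref{main proposition} coincide so the exponents add up to $i(\eta,\gamma)$. You merely spell out the spectral symmetry and the algebra in more detail than the paper does.
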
 

\begin{proof}
Since $\rho(B)$ is a diagonalizable element in $PSO(k,k+1)$ or $PSp(2k,\Rbbb)$, we see that $\beta_{k+1}=\frac{1}{\beta_{n-k}}$ for $k=0,\dots,n-1$. Apply this to (2) of Proposition \ref{main proposition}.
\end{proof}

From this corollary, the same proof as (2) of Theorem \ref{main theorem} allows us to obtain the following stronger inequality in the case when $\rho\in Hit_n(S)'$.

\begin{cor}
Let $\gamma$, $\eta$ be two non-contractible closed curves in $S$ so that $\gamma$ is simple and $i(\eta,\gamma)\neq 0$. Then for any $\rho\in Hit_n(S)'$, 
\[\frac{1}{\exp(l_\rho(\eta))}<\bigg(1-\frac{1}{\exp(\frac{l_\rho(\gamma)}{n-1})}\bigg)^{i(\eta,\gamma)}.\]
\end{cor}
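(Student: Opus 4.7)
The plan is to mimic the proof of (2) of Theorem~\ref{main theorem} almost verbatim, but to feed in the sharper pointwise inequality provided by the immediately preceding corollary (which is valid for $\rho\in Hit_n(S)'$ because of the spectral duality $\beta_{k+1}=1/\beta_{n-k}$) in place of the mixed-exponent Proposition~\ref{main proposition}(2).

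First, I would choose orientations on $\gamma$, $\eta$ and pick representatives $A,B\in\Gamma$ for $\eta,\gamma$ respectively so that the attracting and repelling fixed points $a^+,b^+,a^-,b^-$ lie on $\partial_\infty\Gamma$ in that cyclic order; this is possible because $i(\eta,\gamma)\neq 0$, exactly as at the start of the proof of Theorem~\ref{main theorem}(1). Writing $0<\alpha_1<\dots<\alpha_n$ and $0<\beta_1<\dots<\beta_n$ for the eigenvalues of $\rho(A)$ and $\rho(B)$, the preceding corollary supplies, for each $k=0,\dots,n-2$, the inequality
\[\frac{\alpha_n}{\alpha_1}>\bigg(\frac{\beta_{k+2}}{\beta_{k+2}-\beta_{k+1}}\bigg)^{i(\eta,\gamma)},\]
which, after inverting and extracting an $i(\eta,\gamma)$-th root, is equivalent to
\[\frac{\beta_{k+1}}{\beta_{k+2}}<1-\bigg(\frac{\alpha_1}{\alpha_n}\bigg)^{1/i(\eta,\gamma)}.\]

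The remaining step is purely algebraic. I would take the product of the $n-1$ inequalities above (one per $k$); the left-hand side telescopes to $\beta_1/\beta_n$, yielding
\[\frac{\beta_1}{\beta_n}<\bigg(1-\bigg(\frac{\alpha_1}{\alpha_n}\bigg)^{1/i(\eta,\gamma)}\bigg)^{n-1}.\]
Taking an $(n-1)$-th root and then solving for $\alpha_1/\alpha_n$ gives
\[\frac{\alpha_1}{\alpha_n}<\bigg(1-\bigg(\frac{\beta_1}{\beta_n}\bigg)^{1/(n-1)}\bigg)^{i(\eta,\gamma)},\]
which is the desired inequality after substituting $\alpha_1/\alpha_n=\exp(-l_\rho(\eta))$ and $\beta_1/\beta_n=\exp(-l_\rho(\gamma))$.

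I do not anticipate any real obstacle here: the heavy geometric input has already been packaged into Proposition~\ref{main proposition} and the preceding corollary, and what remains is the same telescoping-product trick already used in the proof of Theorem~\ref{main theorem}(2). The only things to verify are that the cyclic order of the fixed points can indeed be arranged (the same point used in the proof of Theorem~\ref{main theorem}(1)) and that strictness of the inequality is preserved through multiplication of positive terms and under the root and rearrangement steps, both of which are immediate.
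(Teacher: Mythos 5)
Your proof is correct and is essentially the paper's argument: the paper simply says to run the proof of (2) of Theorem \ref{main theorem} with the strengthened input from the preceding corollary, which in this uniform-exponent case collapses to exactly the telescoping-product manipulation you carry out. The rearrangements and root extractions you perform are all between positive quantities, so strictness is preserved and no further justification is needed.
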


Finally, combining (1) of Proposition \ref{main proposition} with some results proven in \cite{Zha1}, one obtains the following properness result.

\begin{cor}\label{proper}
Let $\Cmc:=\{\gamma_1,\dots,\gamma_k\}$ be a collection of closed curves in $S$ that contains a pants decomposition, so that the complement of $\Cmc$ in $S$ is a union of discs. Then the map
\begin{eqnarray*}
\Psi:Hit_n(S)&\to&\Rbbb^k\\
\rho&\mapsto&(l_\rho(\gamma_1),\dots,l_\rho(\gamma_k))
\end{eqnarray*}
is proper.
\end{cor}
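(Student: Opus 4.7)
The plan is to argue by contradiction. Suppose $\Psi$ is not proper, so there exists a sequence $\{\rho_i\}\subset Hit_n(S)$ escaping every compact set with $l_{\rho_i}(\gamma_j)\leq L$ for some constant $L$ and all indices $i,j$. The strategy is to use Proposition \ref{main proposition}(1) to bootstrap the bound on the extremal eigenvalue ratio $\lambda_n/\lambda_1$ of $\rho_i(X_j)$ into two-sided control on every intermediate eigenvalue ratio, and then to appeal to the properness ingredient from \cite{Zha1} to derive a contradiction with the sequence escaping $Hit_n(S)$.

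First I would establish the elementary topological observation that every curve in $\Cmc$ intersects some other curve in $\Cmc$. If some $\gamma_j$ were disjoint from $\Cmc\setminus\{\gamma_j\}$, then the component $U$ of $S\setminus(\Cmc\setminus\{\gamma_j\})$ containing $\gamma_j$ in its interior would have to be cut into discs by $\gamma_j$ alone, which is impossible: in the separating case one of the two resulting pieces has $\gamma_j$ as its only boundary component, forcing $\gamma_j$ to be null-homotopic in $S$; in the non-separating case the resulting single piece retains at least two boundary circles and so cannot be a disc.

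Next, for each $j$ pick $j'$ with $i(\gamma_j,\gamma_{j'})\neq 0$, and choose $A,B\in\Gamma$ representing $\gamma_{j'},\gamma_j$ respectively, with orientations chosen so that $a^+,b^+,a^-,b^-$ lie on $\partial_\infty\Gamma$ in the cyclic order required by Proposition \ref{main proposition}. Writing $\alpha_1<\dots<\alpha_n$ and $\beta_1<\dots<\beta_n$ for the eigenvalues of $\rho_i(A)$ and $\rho_i(B)$, part (1) of Proposition \ref{main proposition} gives
\[\frac{\beta_{k+1}}{\beta_{k+2}}<1-\frac{\alpha_1}{\alpha_n}\leq 1-e^{-L}\]
for each $k=0,\dots,n-2$. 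Combined with the telescoping identity $\prod_{k=0}^{n-2}\frac{\beta_{k+1}}{\beta_{k+2}}=\frac{\beta_1}{\beta_n}\geq e^{-L}$, every consecutive ratio $\frac{\beta_{k+1}}{\beta_{k+2}}$ is trapped in a compact sub-interval of $(0,1)$ depending only on $L$ and $n$. Thus the full logarithmic eigenvalue spectrum of each $\rho_i(X_j)$ is both uniformly bounded and uniformly separated, for every $\gamma_j\in\Cmc$.

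Finally, I would invoke the properness-type results from \cite{Zha1}: since $\Cmc$ cuts $S$ into discs and we now have uniform two-sided bounds on every consecutive eigenvalue gap of $\rho_i(X_j)$ for every $\gamma_j\in\Cmc$, the sequence $\{\rho_i\}$ must subconverge in $Hit_n(S)$, contradicting our assumption. The main obstacle in carrying out this plan is locating the correct input from \cite{Zha1}; I expect it to be the statement that a sequence in $Hit_n(S)$ whose eigenvalue data on the $X_j$'s is uniformly bounded away from $0$, $\infty$, and from the walls where consecutive eigenvalues collide, remains in a compact subset of $Hit_n(S)$. In the shear-type coordinates on $Hit_n(S)$ developed in \cite{Zha1}, these controlled quantities should correspond precisely to the length parameters whose divergence characterizes escape to the boundary, so the appeal should be direct.
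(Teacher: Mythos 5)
Your first two steps are fine and in fact reproduce, in contrapositive form, the easy part of the paper's argument: the hypothesis on $\Cmc$ gives a transverse partner $\gamma_{j'}\in\Cmc$ for each pants curve $\gamma_j$, and Proposition \ref{main proposition}(1) together with the bound $l_{\rho_i}(\gamma_j)\leq L$ traps every consecutive eigenvalue ratio of $\rho_i(X_j)$ in a compact subinterval of $(1,\infty)$. But the final step is a genuine gap. The statement you hope to quote from \cite{Zha1} --- that uniform two-sided control of the eigenvalue data along the pants curves forces a sequence in $Hit_n(S)$ to subconverge --- does not exist there and is false. In the parameterization of $Hit_n(S)$ recalled in the appendix, the boundary invariants (which is exactly what you have controlled) account for only $(3g-3)(n-1)$ of the coordinates; the sequence can escape through the gluing parameters or the internal parameters while the eigenvalue data of every $\rho_i(X_j)$ stays fixed. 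Already for $n=2$ your proposed compactness criterion fails: iterated Dehn twisting along a pants curve leaves all pants-curve lengths unchanged but leaves every compact subset of $\Tmc(S)$.

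What is missing is precisely the hard part of the paper's proof: one must show that the bounded lengths of the \emph{other} curves of $\Cmc$ (those crossing $\gamma_j$, and those forced to pass through each pair of pants because the complement of $\Cmc$ is a union of discs) control the gluing and internal parameters. In the paper this is done by two separate quantitative estimates proved with the Frenet-curve and cross-ratio machinery of \cite{Zha1}: Proposition \ref{internal}, which shows that divergence of an internal parameter (with boundary invariants bounded) forces $l_{\rho_i}(\gamma)\to\infty$ for any $\gamma$ that must meet that pair of pants, via the inequality $l_\rho(\gamma)\geq K(\rho,j_0)$; and the final proposition of the appendix, which bounds the gluing parameters by $3\,l_\rho(\gamma)\geq g^{l,m}_{\gamma_j,k}(\rho)$ and $3\,l_\rho(\gamma)\geq -g^{l-1,m+1}_{\gamma_j,k}(\rho)$ for $\gamma$ transverse to $\gamma_j$. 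Neither estimate follows from Proposition \ref{main proposition}, and without them (or a substitute) your contradiction does not materialize, since the hypotheses you have secured are compatible with $\{\rho_i\}$ leaving every compact set.
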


In other words, whether or not a sequence $\{\rho_i\}$ divergences in the Hitchin component can be detected by the $\rho_i$-lengths of the curves in $\Cmc$. We postpone the proof of this corollary to Appendix \ref{appendix} because it uses some technical results from \cite{Zha1}. 

\subsection{Counter example for non-Hitchin representations.}\label{counter example}
Note that in our proof, we used very strongly that the representations we consider are in $Hit_n(S)$ because we used properties of the Frenet curve to obtain our estimates. In fact, the collar lemma is special to Hitchin representations, and does not hold even on the space of discrete and faithful representations from $\Gamma$ to $PSL(n,\Rbbb)$. 

To see this, consider the space of conjugacy classes of quasi-Fuchsian representations from $\Gamma$ to $PSL(2,\Cbbb)=PSO(3,1)^+\subset PSL(4,\Rbbb)$, which is the group of orientation preserving isometries of $\Hbbb^3$. These are discrete and faithful representations whose limit set in the Riemann sphere is a Jordan curve. It is well-known that each quasi-Fuchsian representation $\rho$ induces a convex cocompact hyperbolic structure on the three-manifold $S\times I$. Also, for any non-identity element $X$ in $\Gamma$, the closed geodesic $\gamma$ in $S\times I$ (equipped with the hyperbolic metric induced by $\rho$) corresponding to $X$ has $\rho$-length 
\[l_\rho(\gamma)=\log\bigg|\frac{\lambda_4}{\lambda_1}\bigg|,\] 
where $\lambda_4$ and $\lambda_1$ are the eigenvalues of $\rho(X)$ with largest and smallest modulus respectively (see Proposition 2.1 of Cooper-Long-Tillman \cite{CooLonTil1}).

It is a theorem of Bers (Theorem 1 of \cite{Ber1}) that the space of quasi-Fuchsian representations can be naturally identified with $\Tmc(S)\times \Tmc(\overline{S})$, where $\overline{S}$ is $S$ with the opposite orientation. For any quasi-Fuchsian representation $\rho$ let $(\rho^+,\rho^-)$ denote the pair of Fuchsian representations that corresponds to $\rho$, so that $\rho^+\in\Tmc(S)$ and $\rho^-\in\Tmc(\overline{S})$. Then for any closed non-contractible curve $\gamma$ in $S$, let $\gamma_\rho$ be the geodesic representative of $\gamma$ in the hyperbolic metric on $S\times I$ corresponding to $\rho$, and let $\gamma_{\rho^+}$ and $\gamma_{\rho^-}$ be the geodesic representatives of $\gamma$ in the hyperbolic metrics on $S$ and $\overline{S}$ corresponding to $\rho^+$ and $\rho^-$ respectively. By Theorem 3.1 of Epstein-Marden-Markovic \cite{EpsMarMar1} we know that
\[l_\rho(\gamma_\rho)\leq\min\{2\cdot l_{\rho^+}(\gamma_{\rho^+}),2\cdot l_{\rho^-}(\gamma_{\rho^-})\}.\]

For any pair of simple closed curves $\eta$ and $\gamma$ and for any $\epsilon>0$, let $\rho$ be a quasi-Fuchsian representation so that 
\[l_{\rho^+}(\gamma_{\rho^+})<\frac{\epsilon}{2}\ \text{ and }\ l_{\rho^-}(\eta_{\rho^-})<\frac{\epsilon}{2}.\]
Hence, $l_\rho(\gamma_\rho)$ and $l_\rho(\eta_\rho)$ are both smaller than $\epsilon$. This implies that the analog of Theorem \ref{main theorem} does not hold on the space of discrete and faithful, or even Anosov, representations from $\Gamma$ to $PSL(4,\Rbbb)$. (See Guichard-Wienhard \cite{GuiWie1} for more background on Anosov representations.)

\subsection{Comparison with the classical collar lemma.}\label{comparison}
Let $\rho$ be a representation in the Fuchsian locus of $Hit_n(S)$ and let $h$ be the corresponding Fuchsian representation in $\Tmc(S)$. Also, let $X$ be a non-identity element in $\Gamma$ and let $\gamma$ be a curve in $S$ corresponding to $X$. If $\lambda^{-1}$ and $\lambda$ are the two eigenvalues of $h(X)$, then $\lambda^{-n+1}$, $\lambda^{-n+3}$, $\dotsc$, $\lambda^{n-3}$, $\lambda^{n-1}$ are the $n$ eigenvalues of $\rho(X)$. Hence we can get the lengths
\[ l_h(\gamma)=2 \log(\lambda) \quad \textrm{and} \quad  l_{\rho}(\gamma)=2(n-1)\log(\lambda).\]

\begin{figure}[ht]
\centering
\includegraphics[width=4.95cm]{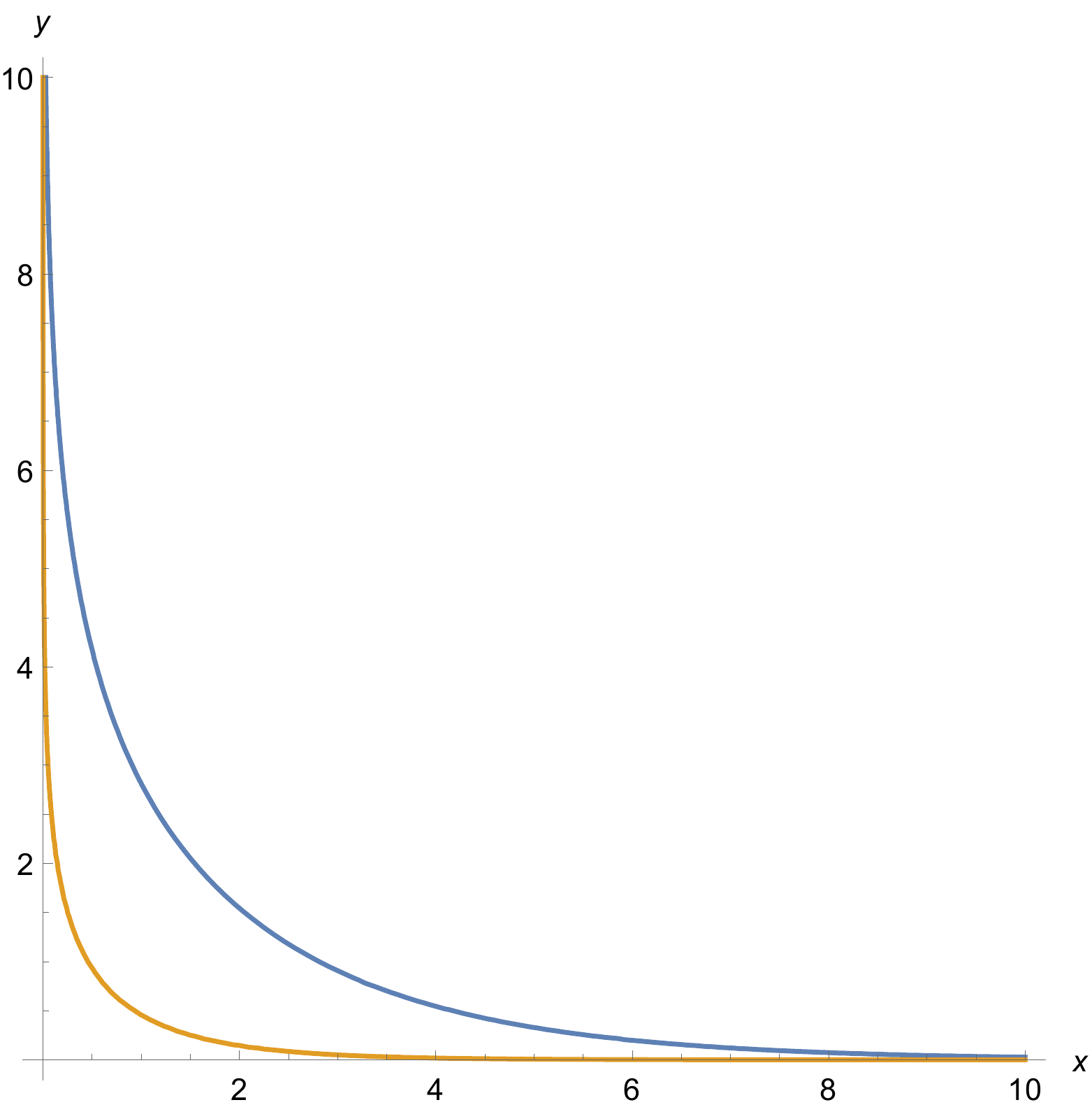}
\caption{The blue curve $\sinh (\frac{x}{2})\sinh (\frac{y}{2})=1$ and the orange curve $(e^{x} -1)(e^{y} -1)=1$}\label{graph}
\end{figure}

Since $h\in\Tmc(S)$, the classical collar lemma holds. In other words, for any pair of curves $\gamma$ and $\eta$ in $S$ such that $\eta$ is simple and $i(\eta,\gamma)>0$, we have (Corollary 4.1.2 of Buser \cite{Bus1})
\begin{equation}\label{classical collar lemma}
I_{\gamma,\eta}(h):=\sinh \Big(\frac{l_h(\gamma)}{2}  \Big) \sinh \Big(\frac{l_h(\eta)}{2}  \Big) >  1.
\end{equation}
This inequality is sharp, in the sense that for any $S$, there are curves $\eta$ and $\gamma$ in $S$ and a sequence of Fuchsian representations $\{h_i\}$ such that $I_{\gamma,\eta}(h_i)$ converges to $1$.
For more details, refer to Section 6 of Matelski \cite{Mat1}.

On the other hand, (1) of Corollary \ref{main corollary intro}, specialized to the $n=2$ case, is the inequality
\[(e^{l_h(\gamma)}-1)(e^{l_h(\eta)}-1)> 1.\]
This is weaker than the inequality (\ref{classical collar lemma}), because 
\[e^{x} -1 > \frac{e^{-\frac{1}{2}x}}{2}(e^x-1)=\sinh \Big(\frac{x}{2}\Big)\] 
for every $x > 0$ (see Figure \ref{graph}). Moreover, we are unable to show that the inequality (\ref{classical collar lemma}) fails in $Hit_n(S)$ for any $n>2$. This led us to make the following conjecture in an earlier version of this paper.

\begin{conj}
Let $\rho$ be a representation in $Hit_n(S)$. Then there is some representation $\rho'$ in the Fuchsian locus of $Hit_n(S)$ such that 
\[ l_{\rho}(\gamma) \geq l_{\rho'}(\gamma), \textrm{ for any } \gamma \in \Gamma.\]
\end{conj}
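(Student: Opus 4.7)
The plan is to produce the Fuchsian representative $\rho'$ via the harmonic map construction. Given $\rho \in Hit_n(S)$, Corlette's theorem associates, to every conformal structure $c$ on $S$, a unique $\rho$-equivariant harmonic map $f_c : \widetilde{S} \to \Mtd$ to the $SL(n,\Rbbb)$ symmetric space. Allowing $c$ to vary and minimizing the total energy $E(c)$, one obtains (by Tromba's energy minimization, or the Schoen--Yau/Sacks--Uhlenbeck machinery) a distinguished conformal class, equivalently a hyperbolic structure $h \in \Tmc(S)$ via uniformization. The candidate is $\rho' := \iota_n \circ h$, for which $l_{\rho'}(\gamma) = (n-1)\, l_h(\gamma)$ for every $\gamma \in \Gamma$.

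Next I would attempt to establish the key length inequality $l_\rho(\gamma) \geq (n-1) l_h(\gamma)$. Fix $X \in \Gamma$, let $\widetilde{\gamma} \subset \widetilde{S}$ be its axis for the hyperbolic metric induced by $h$, and consider the image curve $f_h(\widetilde{\gamma}) \subset \Mtd$, which is $\rho(X)$-equivariant and joins a basepoint $o$ to $\rho(X)\cdot o$. The Riemannian translation length $\inf_{o} d_{\Mtd}(o,\rho(X)\cdot o)$ is a lower bound for the length of $f_h(\widetilde{\gamma})$, and, with the normalization from the introduction, equals $\sqrt{2\sum_{i=1}^n (\log \lambda_i)^2}$ where $\lambda_1 < \cdots < \lambda_n$ are the eigenvalues of $\rho(X)$. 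One side of the sandwich would therefore be a conformal/energy estimate relating the $\Mtd$-length of $f_h(\widetilde{\gamma})$ to the hyperbolic length of $\widetilde{\gamma}$ times a constant involving the Hopf differential and the higher holomorphic differentials coming from the Higgs-bundle parametrization of $\rho$; the other side would be a spectral comparison between $\sqrt{2 \sum (\log \lambda_i)^2}$ and $\log(\lambda_n/\lambda_1)$.

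The main obstacle, and the reason I expect serious difficulty, is exactly this spectral comparison: the Riemannian translation length sees all the eigenvalue ratios, whereas $l_\rho(\gamma)$ only sees the extremal ratio. This asymmetry leaves room for pathology, and it is precisely where the conjecture ultimately breaks for $n \geq 4$ (cf.\ Labourie's counterexamples quoted in the introduction): one can separate the intermediate eigenvalues of $\rho(X)$ while keeping $\lambda_n/\lambda_1$ small. For $n=3$ the issue disappears because there is only one intermediate eigenvalue, forced by $\det = 1$; Tholozan's argument uses the affine sphere / cubic differential of Wang to turn this rigidity into the desired length comparison. A secondary plan, which remains viable in general, is to weaken the conjecture so that one compares the full eigenvalue vector of $\rho(X)$, rather than just $(\lambda_1, \lambda_n)$, to the corresponding Fuchsian vector; this is the quantity that the harmonic-map energy actually controls, and it is what one should hope to dominate by a Fuchsian $\rho'$.
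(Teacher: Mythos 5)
You cannot prove this statement, because it is false in the generality stated, and the paper does not prove it either: the conjecture is recorded precisely as a conjecture, with Tholozan's result quoted for $n=3$ and Labourie's argument reproduced to show it \emph{fails} for every $n\geq 4$. Labourie's disproof runs inside $Hit_n(S)'$ (the $PSp(2k,\Rbbb)$ or $PSO(k,k+1)$ Hitchin component, which for $n\geq4$ strictly contains the Fuchsian locus): assuming the conjecture, one takes a path $\rho_t$ through a Fuchsian point $\rho_0$ with tangent $U$ not tangent to the Fuchsian locus, extracts limits of the dominating Fuchsian representations, and produces vectors $W,W'$ tangent to the Fuchsian locus with $dL_\gamma(U+W)\geq 0$ and $dL_\gamma(-U+W')\geq 0$ for all $\gamma$, strictly for some $\gamma$; summing contradicts the fact (Proposition 10.3 of Bridgeman--Canary--Labourie--Sambarino) that the differentials $dL_\gamma$ generate the cotangent space of $Hit_n(S)'$ at $\rho_0$, since $W+W'$ is tangent to the Fuchsian locus and hyperbolic metrics all have the same area. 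Your own proposal concedes this halfway through, so what you have written is not, and cannot become, a proof of the statement.

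Even read as a sketch for the cases where the statement is true ($n=2,3$), the argument has concrete gaps. Choosing the energy-minimizing conformal structure gives a preferred hyperbolic metric $h$, but nothing you write establishes $l_\rho(\gamma)\geq(n-1)\,l_h(\gamma)$: the ``conformal/energy estimate'' relating the $\Mtd$-length of $f_h(\widetilde{\gamma})$ to $(n-1)$ times its hyperbolic length is asserted, not proved, and your spectral comparison points the wrong way. With the paper's normalization one has $(\log\lambda_n-\log\lambda_1)^2\leq 2\sum_{i=1}^n(\log\lambda_i)^2$, i.e.\ $l_\rho(\gamma)\leq\inf_o d_{\Mtd}(o,\rho(X)\cdot o)$; so lower-bounding the length of the image curve by the Riemannian translation length can only ever bound $l_\rho(\gamma)$ from \emph{above} by quantities you do not control, never from below. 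To close your sandwich you would need $l_\rho(\gamma)\geq c\sqrt{2\sum_i(\log\lambda_i)^2}$, which is exactly what fails when intermediate eigenvalues spread out --- the pathology you yourself identify, and the mechanism behind the $n\geq4$ counterexamples. Tholozan's $n=3$ proof does not go through harmonic maps to the symmetric space in this way; it exploits the convex $\Rbbb\Pbbb^2$ structure and its Hilbert metric. The paper's own route around the failed conjecture is to abandon sharpness and prove the unconditional inequalities of Theorem 1.1 directly from the Frenet curve and cross-ratio estimates; your weakened ``full eigenvalue vector'' variant is a reasonable research direction but is not the statement at hand.
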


This conjecture implies that
\[\sinh \Big(\frac{l_\rho(\gamma)}{2(n-1)}  \Big) \sinh \Big(\frac{l_\rho(\eta)}{2(n-1)}  \Big) >  1\]
for any $\rho\in Hit_n(S)$, which is sharp on every $Hit_n(S)$ because it is sharp when restricted to the Fuchsian locus. Recently, Tholozan proved (Section 0.4 of \cite{Tho1}) that the conjecture holds in the case when $n=3$. Furthermore, F. Labourie disproved our conjecture in the case when $n\geq 4$. We will give his argument here.

For any closed curve $\gamma$ in $S$, let $L_\gamma : Hit_n(S) \rightarrow \mathbb{R}$ denote the map given by $ L_\gamma (\rho) = l_{\rho}(\gamma)$. As before, let $Hit_n(S)'$ be the $PSp(2k,\Rbbb)$ or $PSO(k,k+1)$ Hitchin components when $n=2k$ and $n=2k+1$ respectively, and recall that for all $\rho\in Hit_n(S)'$, $l_\rho(\gamma)=2\log|\lambda_n(\rho(X))|$, where $X\in\Gamma$ is a group element corresponding to $\gamma$ and $\lambda_n(\rho(X))$ is the eigenvalue of $\rho(X)$ with largest modulus. Proposition 10.3 of Bridgeman-Canary-Labourie-Sambarino \cite{BriCanLabSam1} then implies that for any $\rho\in Hit_n(S)'$, the set of differentials $\{dL_\gamma:\gamma\text{ a closed curve in }S\}$ generates the entire cotangent space of $Hit_n(S)'$ at $\rho$.

Observe that when $n\geq 4$, $Hit_n(S)'\subset Hit_n(S)$ properly contains the Fuchsian locus. Thus, to show that the conjecture is false, it is sufficient to show that it fails on $Hit_n(S)'$. Suppose for contradiction that the conjecture holds on $Hit_n(S)'$. Choose a point $\rho_0$ in the Fuchsian locus, and take a smooth path $\rho_t$, $t \in (-\epsilon, \epsilon)$ with $\epsilon > 0$, whose non-zero tangent vector $U \in T_{\rho_0}Hit_n(S)'$ is not tangential to the Fuchsian locus. Along the path, choose a sequence of representations $\{\rho_{t_i}\}_{i=1}^{\infty}$ which converges to $\rho_0$ as $i \rightarrow \infty$ so that $t_i >0$ for all $i$.

Since the conjecture holds, there exists the corresponding sequence of Fuchsian representations $\rho'_{t_i}$ such that $L_{\gamma}(\rho_{t_i})  \geq L_{\gamma}(\rho'_{t_i}) , \textrm{ for any } \gamma \in \Gamma$. Also, since $\rho_{t_i}$ converges to $\rho_0$, we see that $L_{\gamma}(\rho'_{t_i})$ is bounded for all $\gamma \in \Gamma$, so the sequence $\{\rho'_{t_i}\}_{i=1}^\infty$ lie in a compact subset of the Fuchsian locus. By picking subsequence, we can assume without loss of generality that $\{ \rho'_{t_i} \}_{i=1}^\infty$ converges to some $\rho'_{0}$ in the Fuchsian locus. The continuity of $L_\gamma$ then implies that $L_\gamma(\rho_0)\geq L_\gamma(\rho_0')$ for all $\gamma\in\Gamma$, so  $\rho_{0} = \rho'_{0}$ because both $\rho_{0}$ and $\rho'_{0}$ lie in the Fuchsian locus. 

Thus, the sequence $\{\rho_{t_i}'\}_{i=1}^\infty$ converges to $\rho_0$ as well. Choose a Riemannian metric on a neighborhood of $\rho_0$ in $Hit_n(S)$. By taking a further subsequence of $\{ \rho_{t_i} \}_{i=1}^\infty$, we can also assume that either one of the following cases hold:
\begin{enumerate}[(i)]
\item $\rho_{t_i}'=\rho_0$ for all $i$.
\item the unit vectors at $\rho_0$ that are tangential to the geodesic between $\rho_{t_i}'$ and $\rho_0$ converge to some unit vector $V\neq 0$ in $T_{\rho_0}Hit_n(S)'$ that is tangential to the Fuchsian locus.
\end{enumerate}

If (i) holds, then we have that $dL_\gamma(U)\geq 0$ for all $\gamma\in\Gamma$. On the other hand, if (ii) holds, then
\begin{eqnarray*}
dL_{\gamma}(U)  &=&  \frac{d}{dt} L_\gamma(\rho_t) |_{t=0} \\
&=&\lim_{i\to\infty}\frac{L_{\gamma}(\rho_{t_i}) - L_{\gamma}(\rho_{0})}{ t_i }\\
&\geq& \lim_{i \to \infty} \frac{L_{\gamma}(\rho_{t_i}') - L_{\gamma}(\rho_{0}')}{ t_i } \\
&=& \Big(\lim_{i\to \infty} s_i\Big) \cdot dL_{\gamma} (V)
\end{eqnarray*}
for some sequence of positive numbers $\{s_i\}_{i=1}^\infty$. Note that if $\lim_{i\to\infty}s_i=\infty$ then, $dL_{\gamma}(V)\leq 0$, which is impossible since $V\neq 0$ is tangential to the Fuchsian locus and all hyperbolic metrics on $S$ have the same area. Hence, $dL_{\gamma}(U+sV) \geq 0$ for some $s\leq0$. 

In either case, there is some vector $W\in T_{\rho_0} Hit_n(S)'$ (possibly the $0$ vector) that is tangential to the Fuchsian locus so that $dL_\gamma(U+W)\geq 0$ for all $\gamma$. Furthermore, since $U+W\neq 0$, the fact that the differentials $dL_\gamma$ generate the cotangent space of $Hit_n(S)'$ at $\rho_0$ implies that $dL_\gamma(U+W)> 0$ for some $\gamma$. By a similar argument, we can also show that there is some vector $W'\in T_{\rho_0} Hit_n(S)'$ that is tangential to the Fuchsian locus so that $dL_\gamma(-U+W')\geq 0$ for all $\gamma$, and this inequality holds strictly for some $\gamma$. 

Adding these two inequalities together gives $dL_\gamma(W+W')\geq 0$ for all $\gamma$, and $dL_\gamma(W+W')> 0$ for some $\gamma$. However, this is impossible since $W+W'$ is tangential to the Fuchsian locus.

\appendix
\section{Proof of Corollary \ref{proper}}\label{appendix}

In this appendix, we will prove the properness result stated as Corollary \ref{proper}. We begin by recalling some results from \cite{Zha1} that we will need. 

Let $\Pmc:=\{\gamma_1,\dots,\gamma_{3g-3}\}$ be an oriented pants decomposition of $S$, i.e. a maximal collection of pairwise non-intersecting, pairwise non-homotopic, homotopically non-trivial, oriented simple closed curves on $S$. These curves cut $S$ into $2g-2$ pairs of pants, which we label by $P_1,\dots,P_{2g-2}$, and also gives us a real analytic diffeomorphism
\[Hit_n(S)\to (\Rbbb^+)^{(3g-3)(n-1)}\times\Rbbb^{(3g-3)(n-1)}\times\Rbbb^{(2g-2)(n-1)(n-2)},\]
which one should think of as a generalization of the Fenchel-Nielsen coordinates on the Teichm\"uller space $\Tmc(S)$ (see Proposition 3.5 of \cite{Zha1}). 

The first $(3g-3)(n-1)$ positive numbers are called the \emph{boundary invariants}. For any $\rho\in Hit_n(S)$, these are the numbers
\[\beta_{\gamma_j,k}:=\log\bigg|\frac{\lambda_{k+1}(\rho(X_j))}{\lambda_k(\rho(X_j))}\bigg|,\]
where $k=1,\dots,n-1$ and $j=1,\dots,3g-3$. Here, $X_j\in\Gamma$ is a group element that corresponds to $\gamma_j$ and $\lambda_1(\rho(X_j)),\dots,\lambda_n(\rho(X_j))$ are the eigenvalues of $\rho(X_j)$ arranged in increasing order of moduli. Note that each of the $3g-3$ curves in $\Pmc$ are associated $n-1$ of these numbers. They capture the eigenvalue data of the holonomy about each of the curves in $\Pmc$, and are a generalization of the Fenchel-Nielsen length coordinates. 

The next $(3g-3)(n-1)$ real numbers are called the \emph{gluing parameters}, and these are also associated to the curves in $\Pmc$. Informally, the $n-1$ gluing parameters associated to each curve in $\Pmc$ is the data specifying how one should ``glue" the representations on adjacent pair of pants together along a common boundary component. Hence, these generalize the Fenchel-Nielsen twist coordinates. Just like the Fenchel-Nielsen twist coordinates, to specify these gluing parameters formally, we need to make additional topological choices to define what is ``zero gluing". In this case, this additional topological choice we make is a pair of distinct points $a_j,b_j\in\partial_\infty\Gamma$ so that $X_j^-$, $a_j$, $X_j^+$, $b_j$ lie in $\partial_\infty\Gamma$ in that cyclic order. 

For simplicity, we will fix such a choice once and for all in the following way. Let $P_1$, $P_2$ be the two pairs of pants that have $\gamma_j$ as a common boundary component (it is possible for $P_1=P_2$). For $i=1,2$, choose $A_i$, $B_i$ and $C_i$ be elements in $\Gamma$ corresponding the boundary components of $P_i$ so that $C_i\cdot B_i\cdot A_i=\id$ and $A_1=A_2^{-1}=X_j$. Let $a_j$ be the repelling fixed point of $B_1$ and $b_j$ be the repelling fixed point of $C_2$. The gluing parameters are then
\[g_{\gamma_j,k}:=\log\big(-(P_{k,1},P_{k,2},P_{k,4},P_{k,3})\big)\]
for $k=1,\dots,n-1$, where $\xi:\partial_\infty\Gamma\to\Fmc(\Rbbb^n)$ is the Frenet curve corresponding to $\rho$ and
\begin{eqnarray*}
P_{k,1}&:=&\xi(X_j^-)^{(k)}+\xi(X_j^+)^{(n-k-1)}\\
P_{k,2}&:=&\xi(X_j^-)^{(k-1)}+\xi(X_j^+)^{(n-k-1)}+\xi(a_j)^{(1)}\\
P_{k,3}&:=&\xi(X_j^-)^{(k-1)}+\xi(X_j^+)^{(n-k)}\\
P_{k,4}&:=&\xi(X_j^-)^{(k-1)}+\xi(X_j^+)^{(n-k-1)}+\xi(b_j)^{(1)}
\end{eqnarray*}
are four hyperplanes in $\Rbbb^n$ that intersect at $M_k:=\xi(X_j^-)^{(k-1)}+\xi(X_j^+)^{(n-k-1)}$.

Finally, the remaining $(2g-2)(n-1)(n-2)$ real numbers are called the \emph{internal parameters}, and are associated to the pairs of pants $P_1,\dots,P_{2g-2}$. To each $P_j$, we associate $(n-1)(n-2)$ internal parameters, and they parameterize the Hitchin representations on a pair of pants after the boundary invariants are fixed. These are defined in great detail in Section 3 of \cite{Zha1}. For our purposes though, we do not need to know what these parameters are, but only the following proposition.

\begin{prop}\label{internal}
Fix a pair of pants $P_{j_0}$ given by $\Pmc$. Let $\{\rho_i\}$ be a sequence in $Hit_n(S)$ so that 
\begin{itemize}
\item The boundary invariants corresponding to $\partial P_{j_0}$ remain bounded away from $0$ and $\infty$ along $\{\rho_i\}$.
\item Some internal parameter corresponding to $P_{j_0}$ grows to $\infty$ or $-\infty$ along $\{\rho_i\}$.
\end{itemize}
Let $\gamma$ be a closed curve in $S$ with the property that any closed curve homotopic to $\gamma$ has non-empty intersection with $P_{j_0}$. Then $\lim_{i\to\infty}l_{\rho_i}(\gamma)=\infty$.
\end{prop}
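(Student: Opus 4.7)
The plan is to combine the universal collar lemma (Corollary \ref{main corollary intro}) with a geometric interpretation of the internal coordinates on the Hitchin component of a pair of pants from \cite{Zha1}. The strategy is to extract, from the divergence of internal parameters, an auxiliary simple closed curve $\eta_0$ crossing $P_{j_0}$ whose $\rho_i$-length tends to $0$, and then invoke the collar lemma against $\gamma$.

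First, I would use results from \cite{Zha1} to produce a simple closed curve $\eta_0 \subset S$ that intersects $P_{j_0}$ essentially and satisfies $l_{\rho_i}(\eta_0)\to 0$. The idea is that the internal parameters admit a geometric interpretation in terms of triple ratios and edge functions associated to an ideal triangulation of $P_{j_0}$; as an internal parameter diverges while the boundary invariants remain bounded, the Frenet curve $\xi$ restricted to the lifts of a bounded region of $P_{j_0}$ degenerates in a controlled way, and this degeneration is witnessed by the cross ratio governing some closed curve $\eta_0$ that passes through $P_{j_0}$. More concretely, one can realize $\eta_0$ as a simple closed curve on a subsurface of $S$ containing $P_{j_0}$ (for instance, on the double of $P_{j_0}$ if $P_{j_0}$ is glued to itself, or on $P_{j_0}$ together with an adjacent pair of pants) whose length can be read off from the offending internal parameter and the bounded gluing and boundary data via Lemma \ref{cross ratio and length}.

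Next, since $\gamma$ has the property that every closed curve homotopic to it meets $P_{j_0}$, and $\eta_0$ is an essential curve crossing $P_{j_0}$, standard surface topology gives $i(\gamma,\eta_0)\geq 1$. Applying part (1) of Corollary \ref{main corollary intro} then yields
\[(\exp(l_{\rho_i}(\gamma))-1)(\exp(l_{\rho_i}(\eta_0))-1)>1.\]
As $l_{\rho_i}(\eta_0)\to 0$, the second factor on the left tends to $0$, forcing $\exp(l_{\rho_i}(\gamma))\to\infty$, and hence $l_{\rho_i}(\gamma)\to\infty$, as required.

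The main obstacle is the first step: it is geometrically plausible that a diverging internal parameter forces some curve crossing $P_{j_0}$ to collapse in $\rho_i$-length, but identifying such a curve explicitly and certifying that its eigenvalue ratio indeed degenerates requires unpacking the case analysis in \cite{Zha1} one internal parameter at a time. This is precisely why the corollary is deferred to an appendix: the combinatorial/representation-theoretic bookkeeping tying each internal parameter to a concrete simple closed curve is not self-contained within the present paper.
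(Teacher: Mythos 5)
Your reduction to the collar lemma cannot work, because the auxiliary curve you need does not exist. You ask for a simple closed curve $\eta_0$ that essentially meets $P_{j_0}$ and satisfies $l_{\rho_i}(\eta_0)\to 0$; but any such $\eta_0$ has exactly the property hypothesized for $\gamma$ in the proposition (every curve homotopic to it meets $P_{j_0}$), so if the proposition is true its length must tend to $\infty$, and your step 1 is inconsistent with the statement being proved. More concretely, the degeneration produced by letting an internal parameter diverge while the boundary invariants of $\partial P_{j_0}$ stay bounded is not a pinching degeneration: the boundary curves of $P_{j_0}$ keep length bounded away from $0$, and no curve crossing $P_{j_0}$ collapses -- lengths of crossing curves blow up instead (this is precisely the content of the proposition, and is the analogue of the fact that diverging Fenchel--Nielsen twists do not pinch any curve). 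Separately, even granting such an $\eta_0$, the claim that $i(\gamma,\eta_0)\geq 1$ because both curves essentially meet $P_{j_0}$ is false: a pair of pants supports disjoint essential arcs (e.g.\ joining different pairs of boundary components), so two curves can each pass essentially through $P_{j_0}$ and still be disjoint. So both the analytic input and the topological input of your argument fail.

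The paper's proof goes in the opposite direction: it establishes a direct lower bound on $l_{\rho_i}(\gamma)$ rather than an upper bound on some other curve. Using the ideal triangulation of $P_{j_0}$ from \cite{Zha1}, one associates to $\rho$ a quantity $K(\rho,j_0)$, the minimum over the edges of $P_{j_0}$ of certain cross-ratio--type weights $K[a,b]$, and the arguments of Section 5.1 of \cite{Zha1} show that the hypotheses (bounded boundary invariants, a diverging internal parameter) force $K(\rho_i,j_0)\to\infty$. Then, for any $\gamma$ whose homotopy class must cross $P_{j_0}$, one picks an edge lift $\{a,b\}$ separating the fixed points of the corresponding group element, and the crossing $(p)$-subsegments of \cite{Zha1} give
\[l_{\rho}(\gamma)\ \geq\ \frac{1}{n}\sum_{p=0}^{n-1}l(c_p(\widetilde{e}))\ \geq\ K(\rho,j_0),\]
which yields the conclusion with no collar lemma at all. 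If you want to salvage your outline, the collar lemma is the wrong tool here; what is needed is the quantitative link between internal parameters and cross ratios along curves crossing the pair of pants, which is exactly the bookkeeping in \cite{Zha1} that the appendix imports.
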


\begin{proof}
The proof of this proposition is a slight modification of the proof of the main theorem given in Section 5.1 of \cite{Zha1}. In Section 3.2 of \cite{Zha1}, there is a description of a particular way to cut each $P_j$ into two ideal triangles that share all three edges. Doing this over all $P_j$ gives us $6g-6$ edges. Here, we view each of these edges $e=[a,b]$ as a $\Gamma$-orbit of a pair of distinct points $a,b\in\partial_\infty\Gamma$. 

Let $\rho\in Hit_n(S)$ and $\xi$ the corresponding Frenet curve. As was done in Section 4.4 of \cite{Zha1}, one can associate a particular positive number $K[a,b]$ to each of these $6g-6$ edges $[a,b]$.  Using this, define
\[K(\rho,j_0):=\min\{K[a,b]:[a,b]\subset P_{j_0}\}.\]
The same argument as given in Section 5.1 of \cite{Zha1} proves that 
\[\lim_{i\to\infty}K(\rho_i,j_0)=\infty.\] 

Let $X\in\Gamma$ be a group element corresponding to $\gamma$ and let $X^+$ and $X^-$ be the attracting and repelling fixed points of $X$ in $\partial_\infty\Gamma$. Let $e=[a,b]$ be an edge in $P_{j_0}$ so that there is a lift $\etd=\{a,b\}$ with the property that $X^-,a,X^+,b$ lie in $\partial_\infty\Gamma$ in that cyclic order. Such an edge exists by the hypothesis we imposed on $\gamma$. For any $p=0,\dots,n-1$, one can define subsegments $c_p(\etd)$ of the projective line $\Pbbb(\xi(X^-)^{(1)}+\xi(X^+)^{(1)})\subset\Rbbb\Pbbb^{n-1}$ associated to each lift $\etd=\{a,b\}$ of $e=[a,b]$. These are called the \emph{crossing $(p)$-subsegments} (see Definition 4.7 of \cite{Zha1}). Using the cross ratio, we can define a notion of length for these subsegments, which we denote by $l(c_p(\etd))$ (see Definition 4.8 of \cite{Zha1}).

By the proof of Proposition 4.16 of \cite{Zha1}, we see that 
\[\frac{1}{n}\sum_{p=0}^{n-1}l(c_p(\etd))\geq K(\rho,j_0).\]
Furthermore, by Lemma 4.9 and Lemma 4.10 of \cite{Zha1}, we have
\[l_\rho(\gamma)\geq l(c_p(\etd)),\]
for all $p=0,\dots,n-1$, which allows us to conclude that 
\[l_\rho(\gamma)\geq K(\rho,j_0).\]
Combining this with the fact that $\lim_{i\to\infty}K(\rho_i,j_0)=\infty$  gives the proposition.
\end{proof}

With the above proposition, we are ready to prove Corollary \ref{proper}. Let $\{\rho_i\}$ be a sequence in $Hit_n(S)$, let $\Cmc:=\{\gamma_1,\dots,\gamma_k\}$ satisfy the hypothesis of Corollary \ref{proper} and let $\Pmc:=\{\gamma_1,\dots,\gamma_{3g-3}\}\subset\Cmc$ be a pants decomposition. Observe that the hypothesis on $\Cmc$ ensures the following:
\begin{itemize}
\item For any $\gamma\in\Pmc$, there is some $\gamma'\in\Cmc$ that intersects $\gamma$ transversely.
\item For each pair of pants $P$ given by $\Pmc$, there is some $\gamma\in\Cmc$ so that any closed curve homotopic to $\gamma$ has non-empty intersection with $P$.
\end{itemize}

The pants decomposition $\Pmc$ then gives us a parameterization of $Hit_n(S)$ as described above. We will prove Corollary \ref{proper} in the following steps. 
\begin{enumerate}
\item If there is some boundary invariant $\beta_{\gamma_j,k}$ so that $\lim_{i\to\infty}\beta_{\gamma_j,k}(\rho_i)=\infty$, then $\lim_{i\to\infty}l_{\rho_i}(\gamma_j)=\infty$.
\item If there is some boundary invariant $\beta_{\gamma_j,k}$ so that $\lim_{i\to\infty}\beta_{\gamma_j,k}(\rho_i)=0$, then $\lim_{i\to\infty}l_{\rho_i}(\gamma)=\infty$ for any closed curve $\gamma$ that intersects $\gamma_j$ transversely.
\item If all the boundary invariants remain bounded away from $0$ and $\infty$ and some internal parameter associated to a pair of pants $P$ grows to $\pm\infty$, then $\lim_{i\to\infty}l_{\rho_i}(\gamma)=\infty$ for any closed curve $\gamma$ with the property that any closed curve homotopic to $\gamma$ has non-empty intersection with $P$.
\item If all the boundary invariants remain bounded away from $0$ and $\infty$ and there is some gluing parameter $g_{\gamma_j,k}$ so that $\lim_{i\to\infty}g_{\gamma_j,k}(\rho_i)=\pm\infty$, then $\lim_{i\to\infty}l_{\rho_i}(\gamma)=\infty$ for any $\gamma$ that intersects $\gamma_j$ transversely.
\end{enumerate}
Note that together, the four statements above prove Corollary \ref{proper}. Statement (1) is obvious because 
\[l_\rho(\gamma_j)=\sum_{k=1}^{n-1}\beta_{\gamma_j,k}(\rho)\] 
and all the boundary invariants are positive. Also, Statement (3) is a restatement of Proposition \ref{internal}, and Statement (2) is an immediate consequence of (1) of Proposition \ref{main proposition}, which is a main result in this paper. The rest of this appendix will be the proof of Statement (4). 

Let $X_j,X\in\Gamma$ correspond to $\gamma_j$ and $\gamma$ respectively so that $X_j^-$, $X^-$, $X_j^+$, $X^+$ lie in $\partial_\infty\Gamma$ in that cyclic order. We previously chose a pair of points $a_j,b_j\in\partial_\infty\Gamma$ so that $X_j^-$, $a_j$, $X_j^+$, $b_j$ lie in $\partial_\infty\Gamma$ in that cyclic order in order to define the gluing parameters $g_{\gamma_j,k}$ associated to $\gamma_j$. If we choose $X_j^l\cdot a_j$ and $X_j^m\cdot b_j$ in place of $a_j$ and $b_j$, we get another collection of gluing parameters, which we denote by $g^{l,m}_{\gamma_j,k}$. The next lemma explains the relationship between $g_{\gamma_j,k}=g^{0,0}_{\gamma_j,k}$ and $g^{l,m}_{\gamma_j,k}$. Its proof is an easy computation which we omit.

\begin{lem}
Let $\rho\in Hit_n(S)$ and let $\lambda_1,\dots,\lambda_n$ be the eigenvalues of $\rho(X_j)$ arranged in increasing order of their moduli. For any integers $l,m$, we have 
\[g^{l,m}_{\gamma_j,k}(\rho)=(m-l)\log\bigg(\frac{\lambda_{k+1}}{\lambda_k}\bigg)+g_{\gamma_j,k}(\rho).\]
\end{lem}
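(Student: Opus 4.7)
The plan is to reduce the cross ratio in Definition \ref{cross ratio definition} to a classical cross ratio of four lines in a two-plane adapted to the eigenstructure of $\rho(X_j)$. First, by $\rho$-equivariance of the Frenet curve, $\xi(X_j^l\cdot a_j)^{(1)}=\rho(X_j)^l\cdot\xi(a_j)^{(1)}$ and analogously for $b_j$. Since $\rho(X_j)$ fixes both $X_j^+$ and $X_j^-$, it preserves every subspace $\xi(X_j^\pm)^{(i)}$; in particular, of the four hyperplanes defining $g_{\gamma_j,k}(\rho)$, only $P_{k,2}$ and $P_{k,4}$ are altered when we pass to $X_j^l\cdot a_j$ and $X_j^m\cdot b_j$, while $M_k$, $P_{k,1}$, and $P_{k,3}$ are fixed.

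Next, I would fix an eigenbasis $e_1,\dots,e_n$ of $\rho(X_j)$ with $\rho(X_j)\cdot e_i=\lambda_i e_i$, chosen so that $\xi(X_j^-)^{(i)}=\Span\{e_1,\dots,e_i\}$ and $\xi(X_j^+)^{(i)}=\Span\{e_n,\dots,e_{n-i+1}\}$; such a basis exists by Labourie's result on the eigenvalue structure of Hitchin representations invoked at the start of Section \ref{general}. Then $M_k=\Span\{e_i:i\neq k,k+1\}$, and the plane $\pi:=\Span\{e_k,e_{k+1}\}$ is a $\rho(X_j)$-invariant complement of $M_k$ on which $\rho(X_j)$ acts diagonally with eigenvalues $\lambda_k$ on $e_k$ and $\lambda_{k+1}$ on $e_{k+1}$. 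By Lemma \ref{basic cross ratio}(2) the cross ratio $(P_{k,1},P_{k,2},P_{k,4},P_{k,3})$ equals the cross ratio of the four coplanar lines $P_{k,i}\cap\pi$ inside $\pi$. A direct inspection gives $P_{k,1}\cap\pi=[e_k]$ and $P_{k,3}\cap\pi=[e_{k+1}]$; writing representative vectors $v_a,v_b$ of $\xi(a_j)^{(1)},\xi(b_j)^{(1)}$ as $v_a=m_a+\alpha e_k+\beta e_{k+1}$ and $v_b=m_b+\gamma e_k+\delta e_{k+1}$ with $m_a,m_b\in M_k$, one finds $P_{k,2}\cap\pi=[\alpha e_k+\beta e_{k+1}]$ and $P_{k,4}\cap\pi=[\gamma e_k+\delta e_{k+1}]$.

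The same decomposition, combined with the diagonal action on $\pi$, yields $P^{l,m}_{k,2}\cap\pi=[\lambda_k^l\alpha e_k+\lambda_{k+1}^l\beta e_{k+1}]$ and $P^{l,m}_{k,4}\cap\pi=[\lambda_k^m\gamma e_k+\lambda_{k+1}^m\delta e_{k+1}]$. Substituting these four lines into the planar cross ratio formula from Definition \ref{cross ratio definition} and simplifying the resulting $2\times 2$ determinants, one obtains after cancellation
\[(P_{k,1},P^{l,m}_{k,2},P^{l,m}_{k,4},P_{k,3})=\left(\frac{\lambda_{k+1}}{\lambda_k}\right)^{m-l}(P_{k,1},P_{k,2},P_{k,4},P_{k,3}),\]
and taking $\log(-\,\cdot\,)$ of both sides yields the claimed identity. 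There is no real obstacle here: the substance of the argument is the $\rho(X_j)$-equivariance of $\xi$ together with the bookkeeping of which factor of $\lambda_k$ versus $\lambda_{k+1}$ is introduced by the wedge calculation, and the latter is controlled entirely by whether the shifted line came from $a_j$ (giving the scaling of the $P_{k,2}$ factors) or from $b_j$ (giving the scaling of the $P_{k,4}$ factors).
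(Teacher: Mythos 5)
Your proof is correct, and it is precisely the ``easy computation'' that the paper omits: work in an eigenbasis of $\rho(X_j)$ adapted to the flags $\xi(X_j^\pm)$, observe that only $P_{k,2}$ and $P_{k,4}$ change while $M_k$, $P_{k,1}$, $P_{k,3}$ are $\rho(X_j)$-invariant, and reduce the hyperplane cross ratio to a $2\times 2$ determinant computation in $\Span\{e_k,e_{k+1}\}$, which yields exactly the factor $(\lambda_{k+1}/\lambda_k)^{m-l}$ and hence the stated identity after taking $\log(-\,\cdot\,)$ (legitimate since the eigenvalues are positive). One cosmetic remark: the reduction to the planar cross ratio is really just the freedom in Definition \ref{cross ratio definition} to choose the representative lines $l_i$ inside $P_i\cap\Span\{e_k,e_{k+1}\}$, rather than an application of (2) of Lemma \ref{basic cross ratio}, but this does not affect the argument.
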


In particular, when the boundary invariants corresponding to $\gamma_j$ are bounded away from $0$ and $\infty$ along a sequence of representations $\{\rho_i\}$ in $Hit_n(S)$, then $\lim_{i\to\infty}g_{\gamma_j,k}(\rho_i)=\pm\infty$ if and only if $\lim_{i\to\infty}g^{l,m}_{\gamma_j,k}(\rho_i)=\pm\infty$. Statement (4) then follows immediately from this observation and the following proposition.

\begin{prop}
Let $\rho\in Hit_n(S)$ and let $\gamma_j$, $\gamma$, $X_j^-$, $X_j^+$, $X^-$, $X^+$, $a_j$, $b_j$ be as above. Let $l$ and $m$ be integers such that $X_j^-$, $X_j^{l-1}\cdot a_j$, $X^-$, $X_j^l\cdot a_j$, $X_j^+$, $X_j^{m+1}\cdot b_j$, $X^+$, $X_j^m\cdot b_j$ lie in $\partial_\infty\Gamma$ in that cyclic order. Then
\[3l_\rho(\gamma)\geq g^{l,m}_{\gamma_j,k}(\rho)\,\,\,\,\,\text{ and }\,\,\,\,\, 3l_\rho(\gamma)\geq -g^{l-1,m+1}_{\gamma_j,k}(\rho)\]
for all $k=1,\dots,n-1$.
\end{prop}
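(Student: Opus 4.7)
The plan is to reduce the gluing-parameter inequality to a cross-ratio estimate in the two-dimensional quotient $\Rbbb^n / M_k$, where $M_k := \xi(X_j^-)^{(k-1)} + \xi(X_j^+)^{(n-k-1)}$ is the common subspace of the four hyperplanes defining $g^{l,m}_{\gamma_j,k}(\rho)$. First I would use the basis of $\rho(X_j)$-eigenvectors to identify the quotient $\Rbbb^n / M_k$ with an affine line, placing $L_1 := P_{k,1}/M_k$ at position $0$ and $L_3 := P_{k,3}/M_k$ at $\infty$. Since $\rho(X_j)$ scales the $v_{k+1}$-coordinate relative to the $v_k$-coordinate by $r := \lambda_{k+1}/\lambda_k = e^{\beta_{\gamma_j,k}(\rho)}$, the projections of $\xi(X_j^l\cdot a_j)^{(1)}$ and $\xi(X_j^m\cdot b_j)^{(1)}$ land at positions $\alpha r^l$ and $\beta r^m$ respectively, for some nonzero $\alpha, \beta$ of opposite signs (say $\alpha > 0 > \beta$). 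A routine cross-ratio computation then yields $e^{g^{l,m}_{\gamma_j,k}(\rho)} = |\beta/\alpha|\, r^{m-l}$.

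Next I would invoke Lemma \ref{important lemma}, which guarantees that the projection $f_{M_k}:\partial_\infty\Gamma\to \Pbbb(\xi(X_j^-)^{(1)}+\xi(X_j^+)^{(1)})$ is a cyclic-order-preserving homeomorphism. Combined with the hypothesized cyclic order of the eight points on $\partial_\infty\Gamma$, this sandwiches the positions $x_\pm$ of the projected points $\xi(X^\pm)^{(1)}$ as $\alpha r^{l-1} < x_- < \alpha r^l$ and $|\beta|\, r^m < |x_+| < |\beta|\, r^{m+1}$. Plugging these into the explicit formula from the previous step yields the clean estimates
\[
e^{g^{l,m}_{\gamma_j,k}(\rho)} < \frac{|x_+|}{x_-} \qquad\text{and}\qquad e^{-g^{l-1,m+1}_{\gamma_j,k}(\rho)} < \frac{x_-}{|x_+|},
\]
so both inequalities in the proposition reduce to the single assertion that $|x_+|/x_-$ and its reciprocal are each bounded above by $e^{3l_\rho(\gamma)} = (\beta_n/\beta_1)^3$.

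Establishing this comparison cross-ratio estimate is the principal obstacle. The quantity $|x_+|/x_-$ equals the absolute value of the cross ratio $(P_{k,1},P_{X^-},P_{X^+},P_{k,3})$, whose four hyperplanes still share $M_k$. My plan is to exploit the multiplicativity identity of Lemma \ref{basic cross ratio}(4) to decompose this cross ratio into a product of three factors by interpolating two intermediate hyperplanes between $P_{X^-}$ and $P_{X^+}$ along the cyclic chain $X_j^-, X^-, X_j^+, X^+$. Each of the three factors can then be bounded by $\beta_n/\beta_1$ through an application of Proposition \ref{main proposition} with $(A,B) = (X,X_j)$ or $(X,X_j^{-1})$ (which is valid because $X^+,X_j^+,X^-,X_j^-$ cycle appropriately); the factor of three in the exponent arises naturally from this three-step decomposition. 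The second inequality of the proposition is obtained by precisely the symmetric manipulation already noted, using $x_-/|x_+|$ in place of $|x_+|/x_-$.
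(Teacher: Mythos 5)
Your first paragraph and the sandwich argument are essentially sound (up to a slip: the target of the boundary map cannot be $\Pbbb(\xi(X_j^-)^{(1)}+\xi(X_j^+)^{(1)})$, since for $2\leq k\leq n-2$ that plane is contained in $M_k$; one must project to a line transverse to $M_k$, e.g.\ $\Pbbb(\xi(X^-)^{(1)}+\xi(X^+)^{(1)})$, which is what the paper itself intersects everything with). The genuine gap is the reduction at the end of your second paragraph. The quantity $|x_+|/x_-$ is the cross ratio $(P_{k,1},P_{X^-},P_{X^+},P_{k,3})$, which depends only on $\rho$, $X$ and $X_j$, not on $a_j$, $b_j$, $l$, $m$; your sandwich only gives $e^{g^{l,m}_{\gamma_j,k}}<|x_+|/x_-<e^{g^{l,m}_{\gamma_j,k}}\,e^{2\beta_{\gamma_j,k}(\rho)}$, so passing from $g^{l,m}_{\gamma_j,k}$ to $|x_+|/x_-$ discards a factor $e^{2\beta_{\gamma_j,k}(\rho)}$ that is unbounded. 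And the assertion you reduce to, namely that both $|x_+|/x_-$ and its reciprocal are at most $e^{3l_\rho(\gamma)}$, is false. Already for $n=2$ this cross ratio is the classical cross ratio of the four axis endpoints $\xi(X_j^-)^{(1)},\xi(X^-)^{(1)},\xi(X^+)^{(1)},\xi(X_j^+)^{(1)}$ in $\Rbbb\Pbbb^1$ (equivalently, a function of the crossing angle of the two axes alone): fix $\rho$ and $\gamma$, and replace $\gamma_j$ by the simple closed curves $T_\gamma^N(\gamma_j)$, represented by $X^NY$. One endpoint of the axis of $X^NY$ converges to $X^+$ as $N\to\infty$, so this cross ratio (or its reciprocal) blows up while $l_\rho(\gamma)$ is unchanged; since each $T_\gamma^N(\gamma_j)$ is part of a pants decomposition and still meets $\gamma$, the hypotheses of the proposition are met. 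So no bound of $|x_+|/x_-$ by $e^{3l_\rho(\gamma)}$ can hold, and consequently no decomposition into three factors each bounded by $e^{l_\rho(\gamma)}$ exists either. Your third paragraph also lacks a mechanism even for the (false) target: Proposition \ref{main proposition} is an inequality between eigenvalues of $\rho(X)$ and $\rho(X_j)$ (a lower bound on $\alpha_n/\alpha_1$), and it does not give upper bounds for cross ratios of hyperplanes through $M_k$ whose ``endpoints'' are the $\rho(X_j)$-eigen-hyperplanes $P_{k,1},P_{k,3}$.

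The structural point your reduction misses is that only cross ratios whose endpoints are the fixed flags of $X$ can be controlled by powers of $e^{l_\rho(\gamma)}$, because then the dynamics of $\rho(X)$ on the line $\xi(X^-)^{(1)}+\xi(X^+)^{(1)}$ can be brought to bear. Accordingly, the paper keeps the translates of $a_j$ and $b_j$ in the picture: it bounds the cross ratio with endpoints $\xi(X^\pm)^{(1)}$ and interior points $L_{k,a_j}$, $L_{k,b_j}$, which are cut out by the flags at the consecutive translates $X_j^{l-1}\cdot a_j$, $X_j^{l}\cdot a_j$ and $X_j^{m+1}\cdot b_j$, $X_j^{m}\cdot b_j$; the inequality $3l_\rho(\gamma)\geq\log\big(\xi(X^-)^{(1)},L_{k,a_j},L_{k,b_j},\xi(X^+)^{(1)}\big)$ is imported from Lemma 4.11 of \cite{Zha1} (that is where the factor $3$ comes from, not from a three-step multiplicativity), and then monotonicity of the cross ratio (Proposition \ref{useful cross ratio inequalities}) together with the definition of $g^{l,m}_{\gamma_j,k}$ yields both inequalities. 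If you want a self-contained argument you would need to reprove that lemma, not replace it by a bound on $(P_{k,1},P_{X^-},P_{X^+},P_{k,3})$.
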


\begin{proof}
The technique used in this proof is the same as that used in the proofs of Lemma 4.18 of \cite{Zha1}. For any $k=1,\dots,n-1$, let 
\begin{eqnarray*}
P_{k,0}&:=&\xi(X_j^-)^{(k)}+\xi(X_j^+)^{(n-k-1)},\\
P_{k,1}&:=&\xi(X_j^-)^{(k-1)}+\xi(X_j^+)^{(n-k-1)}+\xi(X^-)^{(1)},\\
P_{k,2}'&:=&\xi(X_j^-)^{(k-1)}+\xi(X_j^+)^{(n-k-1)}+\xi(X_j^{l-1}\cdot a_j)^{(1)},\\
P_{k,2}&:=&\xi(X_j^-)^{(k-1)}+\xi(X_j^+)^{(n-k-1)}+\xi(X_j^l\cdot a_j)^{(1)},\\
P_{k,3}&:=&\xi(X_j^-)^{(k-1)}+\xi(X_j^+)^{(n-k)},\\
P_{k,4}'&:=&\xi(X_j^-)^{(k-1)}+\xi(X_j^+)^{(n-k-1)}+\xi(X_j^{m}\cdot b_j)^{(1)},\\
P_{k,4}&:=&\xi(X_j^-)^{(k-1)}+\xi(X_j^+)^{(n-k-1)}+\xi(X_j^{m+1}\cdot b_j)^{(1)},\\
P_{k,5}&:=&\xi(X_j^-)^{(k-1)}+\xi(X_j^+)^{(n-k-1)}+\xi(X^+)^{(1)}.
\end{eqnarray*}
Also, for all $i=0,\dots,5$, let
\begin{eqnarray*}
L_{k,i}'&:=&P_{k,i}'\cap\big(\xi(X^-)^{(1)}+\xi(X^+)^{(1)}\big),\\
L_{k,i}&:=&P_{k,i}\cap\big(\xi(X^-)^{(1)}+\xi(X^+)^{(1)}\big)
\end{eqnarray*}
and let
\begin{eqnarray*}
L_{k,a_j}&:=&\big(\xi(X_j^{l-1}\cdot a_j)^{(k-1)}+\xi(X_j^l\cdot a_j)^{(n-k)}\big)\cap\big(\xi(X^-)^{(1)}+\xi(X^+)^{(1)}\big),\\
L_{k,b_j}&:=&\big(\xi(X_j^{m+1}\cdot b_j)^{(k-1)}+\xi(X_j^m\cdot b_j)^{(n-k)}\big)\cap\big(\xi(X^-)^{(1)}+\xi(X^+)^{(1)}\big).
\end{eqnarray*}

It follows from Lemma 2.5 of \cite{Zha1} that 
\[\xi(X^-)^{(1)}\,,\,\,\, L_{k,a_j}\,,\,\,\,L_{k,2}\,,\,\,\,L_{k,3}\,,\,\,\,L_{k,4}\,,\,\,\, L_{k,b_j}\,,\,\,\, \xi(X^+)^{(1)}\] 
lie in the projective line $\xi(X^-)^{(1)}+\xi(X^+)^{(1)}$ in that cyclic order. Also, by Lemma 4.11 of \cite{Zha1}, we know 
\[3l_\rho(\gamma)\geq\log\big(\xi(X^-)^{(1)},L_{k,a_j},L_{k,b_j},\xi(X^+)^{(1)}\big),\]
which implies that
\[3l_\rho(\gamma)\geq\log\big(\xi(X^-)^{(1)},L_{k,2},L_{k,3},\xi(X^+)^{(1)}\big)\]
and
\[3l_\rho(\gamma)\geq\log\big(\xi(X^-)^{(1)},L_{k,3},L_{k,4},\xi(X^+)^{(1)}\big)\]
by Lemma 2.9. Using Lemma 2.8 and Lemma 2.6, we can also deduce that
\begin{eqnarray*}
\big(\xi(X^-)^{(1)},L_{k,2},L_{k,3},\xi(X^+)^{(1)}\big)&=&\big(\xi(X^-)^{(1)},L_{k,2},L_{k,3},\xi(X^+)^{(1)}\big)_{M_k}\\ 
&=&(P_{k,1},P_{k,2},P_{k,3},P_{k,5}) \\
&\geq&(P_{k,0},P_{k,2},P_{k,3},P_{k,4}') \\
&=&1-(P_{k,0},P_{k,2},P_{k,4}',P_{k,3}) \\
&=&1+e^{g^{l,m}_{\gamma_j,k}}\\
&\geq&e^{g^{l,m}_{\gamma_j,k}}
\end{eqnarray*}
and 
\begin{eqnarray*}
\big(\xi(X^-)^{(1)},L_{k,3},L_{k,4},\xi(X^+)^{(1)}\big)&=&\big(\xi(X^-)^{(1)},L_{k,3},L_{k,4},\xi(X^+)^{(1)}\big)_{M_k}\\ 
&=&(P_{k,1},P_{k,3},P_{k,4},P_{k,5}) \\
&\geq&(P_{k,2}',P_{k,3},P_{k,4},P_{k,0}) \\
&=&1-\frac{1}{(P_{k,0},P_{k,2}',P_{k,4},P_{k,3})} \\
&=&1+e^{-g^{l-1,m+1}_{\gamma_j,k}}\\
&\geq&e^{-g^{l-1,m+1}_{\gamma_j,k}}
\end{eqnarray*}
where $M_k:=\xi(X_j^-)^{(k-1)}+\xi(X_j^+)^{(n-k-1)}$.
\end{proof}

\begin{multicols}{2}
\flushleft Gye-Seon Lee\\
Mathematisches Institut\\
Ruprecht-Karls-Universit\"{a}t Heidelberg\\
D-69120 Heidelberg, Germany\\
lee@mathi.uni-heidelberg.de\\

Tengren Zhang\\
Mathematics Department\\
University of Michigan\\
530 Church St, Ann Arbor, MI 48109\\
tengren@math.umich.edu
\end{multicols}


\begin{thebibliography}{99}

\bibitem{Ber1} 
{L.~Bers}, 
\newblock {\it Simultaneous uniformization}, 
\newblock {\bf Bull. Amer. Math. Soc.}, Vol. 66 (1960), No. 2, 94-97.

\bibitem{Ber2} 
{L.~Bers}, 
\newblock {\it On boundaries of Teichm\"uller spaces and on Kleinian groups: I}, 
\newblock {\bf Ann. Math.}, Vol. 91 (1970), 94-97.

\bibitem{BriHae1} 
{M.~Bridson, A.~Haefliger}, 
\newblock {\it Metric spaces of non-positive curvature}, 
\newblock {\bf Grundlehren Math. Wiss. Springer-Verlag}, (1999).

\bibitem{BriCanLabSam1} 
{M.~Bridgeman, R.~Canary, F. Labourie, A. Sambarino}, 
\newblock {\it The pressure metric for Anosov representations}, 
\newblock {\bf Geom. Func. Anal.}, Vol.  25 (2015), 1089-1179

\bibitem{BurPoz1} 
{M.~Burger, M.B.~ Pozzetti}, 
\newblock {\it Maximal representations, non Archimedean Siegel spaces, and buildings}, 
\newblock Preprint available at arXiv:1509.01184.


\bibitem{Bus1} 
{P.~Buser}, 
\newblock {\it Geometry and Spectra of Compact Riemann Surfaces}, 
\newblock {\bf New York: Birkh\"auser; Springer}, (2010 reprint of the 1992 edition).

\bibitem{Cho1} 
{S.~Choi}, 
\newblock {\it The Margulis lemma and the thick and thin decomposition for convex real projective surfaces}, 
\newblock {\bf Adv. Math.}, Vol. 118 (1993), No. 2, 657-661.

\bibitem{ChoGol1} 
{S.~Choi, W.~Goldman}, 
\newblock {\it Convex Real Projective Structures on Closed Surfaces are Closed}, 
\newblock {\bf Proc. Amer. Math. Soc.}, Vol. 122 (1996), 150-191.

\bibitem{CooLonTil1} 
{D.~Cooper, D.~Long, S.~Tillmann}, 
\newblock {\it On Convex Projective Manifolds and Cusps}, 
\newblock {\bf Adv.Math.}, Vol. 277 (2015), 181-251.

\bibitem{Cor1} 
{K.~Corlette}, 
\newblock {\it Flat $G$-bundles with canonical metrics}, 
\newblock {\bf J. Differential Geom.}, Vol. 28 (1988), No. 3, 361-382.

\bibitem{EelSam1} 
{J.~Eells, J.~Sampson}, 
\newblock {\it Harmonic mappings of Riemannian manifolds}, 
\newblock {\bf Amer. J. Math.}, Vol. 86 (1964), No. 1, 109-160.

\bibitem{EpsMarMar1} 
{D.B.A.~Epstein, A.~Marden, V.~Markovic}, 
\newblock {\it Quasiconformal homeomorphisms and the convex hull boundary}, 
\newblock {\bf Ann. of Math.}, Vol. 159 (2004), No. 1, 305-336

\bibitem{Gol1} 
{W.~Goldman}, 
\newblock {\it Convex Projective Structures on Compact Surfaces}, 
\newblock {\bf J. Differential Geom.}, Vol. 31, No. 3 (1990), 791-845.

\bibitem{Gui1} 
{O.~Guichard}, 
\newblock {\it Composantes de Hitchin et repr\'esentations hyperconvexes de groupes de surface}, 
\newblock {\bf J. Differential Geom.}, Vol. 80 (2008), 391-431.

\bibitem{GuiWie1} 
{O.~Guichard, A.~Wienhard}, 
\newblock {\it Anosov representations: domains of discontinuity and applications}, 
\newblock {\bf Invent. Math.}, Vol. 190 (2012), No. 2, 357-438.

\bibitem{Kee1}
{L.~Keen},
\newblock {\it Collars on Riemann surfaces},
\newblock {\bf Ann. Math. Studies}, Vol. 79 (1974), No. 3, 263-268.

\bibitem{Lab1} 
{F.~Labourie}, 
\newblock {\it Anosov flows, surface groups and curves in projective space}, 
\newblock {\bf Invent. Math.}, Vol. 165 (2006), No. 1, 51-114.

\bibitem{Lab2} 
{F.~Labourie}, 
\newblock {\it Existence d'applications harmoniques tordues \`a valuers dans les vari\'et\'es \`a courbure n\'egative}, 
\newblock {\bf Proc. Amer. Math. Soc.}, Vol. 111 (1991), 877-882.

\bibitem{Mat1} 
{J.~Matelski}, 
\newblock {\it A compactness theorem for Fuchsian groups of the second kind}, 
\newblock {\bf Duke. Math. J.}, Vol. 43 (1976), No. 4, 829-840.

\bibitem{Tho1} 
{N.~Tholozan}, 
\newblock {\it Entropy of Hilbert metrics and length spectrum of Hitchin representations in $PSL(3,\Rbbb)$}, 
\newblock available at arXiv:1506.04640.

\bibitem{Zha1} 
{T.~Zhang}, 
\newblock {\it Degeneration of representations in the Hitchin component}, 
\newblock {\bf Geom. Func. Anal.}, Vol. 25 (2015), 1588-1645.

\end{thebibliography}
\end{document}